\apptocmd{\sloppy}{\hbadness 10000\relax}{}{}
\newtheorem{theorem}{Theorem}
\newtheorem{corollary}[theorem]{Corollary}
\newtheorem{lemma}[theorem]{Lemma}
\newtheorem{proposition}[theorem]{Proposition}
\theoremstyle{definition}
\newtheorem{definition}[theorem]{Definition}
\newtheorem{example}[theorem]{Example}
\theoremstyle{remark}
\newtheorem{remark}[theorem]{Remark}
\newcommand{\CC}{\mathbb{C}}
\newcommand{\NN}{\mathbb{N}}
\newcommand{\camo}{\mathcal{C}}
\newcommand{\cont}{\mathcal{C}}
\newcommand{\holo}{\mathcal{O}}
\newcommand{\matnn}{\mathrm{Mat}(n \times n; \CC)}
\DeclareMathOperator{\tr}{tr}
\DeclareMathOperator{\rank}{rank}
\DeclareMathOperator{\id}{id}
\DeclareMathOperator{\gl}{GL}
\DeclareMathOperator{\lie}{Lie}
\title[Symplectic density property for Calogero--Moser spaces]{The symplectic holomorphic density property for Calogero--Moser spaces}
\author{Rafael B. Andrist}
\address{Department of Mathematics \\
American University of Beirut \\
Beirut, Lebanon \\ And Faculty of Mathematics and Physics \\
University of Ljubljana \\
Ljubljana, Slovenia}
\email{rafael-benedikt.andrist@fmf.uni-lj.si}
\author{Gaofeng Huang}
\address{University of Bern \\ Mathematical Institute \\ 
Bern, Switzerland}
\email{gaofeng.huang@unibe.ch}
\begin{document}

\begin{abstract}
We introduce the symplectic holomorphic density pro\-perty and the Hamiltonian holomorphic density property together with the corresponding version of Anders{\'e}n--Lempert theory. We establish these properties for the Calogero--Moser space $\mathcal{C}_n$ of $n$ particles and describe its group of holomorphic symplectic automorphisms. 
\end{abstract}

\keywords{symplectic density property, symplectic holomorphic density property, Hamiltonian density property, Hamiltonian holomorphic density property, Andersen--Lempert theory, Calogero--Moser spaces}

\subjclass{32M17, 32Q56, 14J42, 14L24}

\maketitle

\section{Introduction}

The modern study of large holomorphic automorphism groups of Stein manifolds started with the seminal paper of Rosay and Rudin \cite{MR929658} in 1988. With the works of And{\'e}rsen and Lempert \cite{MR1185588} in 1992 and Forstneri\v{c} and Rosay \cite{MR1213106} in 1993, a description of and an approximation theory for the holomorphic automorphism group of $\CC^n$ was obtained. This was generalized by Varolin \cites{MR1785520, MR1829353} to Stein manifolds with the density property which he introduced around 2000. This area of research, now called \emph{Anders{\'e}n--Lempert theory}, has developed rapidly, and many Stein manifolds with the density property have been found since then; We refer to the very recent survey by Forstneri\v{c} and Kutzschebauch \cite{MR4440754} for more details.

However, only very little is known so far about symplectic holomorphic automorphisms of Stein manifolds that are equipped with a symplectic holomorphic form. For the cotangent bundle $T^\ast \CC^n \cong \CC^{2n}$ of the complex-Euclidean space with the standard symplectic form, the group of symplectic holomorphic automorphisms was described by Forstneri\v{c} \cite{MR1408866} in 1996. In a recent preprint by Berger and Turaev \cite{berger-turaev}, Forstneri\v{c}'s result has also been transferred to the real smooth symplectic case. Recently, Deng and Wold \cite{MR4423269} deduced from Forstneri\v{c}'s result the Hamiltonian density property for closed co\-adjoint orbits of complex Lie groups. However, the only explicit examples they gave are surfaces where the symplectic form equals the holomorphic volume form and hence these examples are already covered by the well-known volume density property. No other descriptions of the full symplectic holomorphic automorphism group for Stein manifolds have been obtained. 

In this paper, we introduce the \emph{symplectic holomorphic density property} and the \emph{Hamiltonian holomorphic density property} for Stein manifolds together with an appropriate version of the Anders\'en--Lempert theory in Section \ref{sec-dp} which has been mentioned for $\mathbb{C}^{2n}$ in the survey article \cite{MR4440754}. As a first example, we then prove the Hamiltonian/symplectic density property for $T^\ast \CC^n$  in Section \ref{sec-tcn} and derive the result of Forstneri\v{c} \cite{MR1408866}*{Theorem 5.1} about its group of symplectic holomorphic automorphisms; In fact, we can choose a slightly smaller number of generators, see Theorem \ref{thm-cn} and its corollary.

Then, we turn to the main subject of this paper, the \emph{Calogero--Moser spaces} $\camo_n$ which are equipped with a symplectic holomorphic form. In Section \ref{sec-CaMo} we discuss some of their properties and their topology; it turns out \cite{MR1626461} that $\camo_n$ is diffeomorphic to the Hilbert scheme of $n$ points in the affine plane $\CC^2$ whose topology has been fully described by Ellingsrud and Str{\o}mme \cites{MR870732, MR922805}. Since its first de Rham cohomology group vanishes, every symplectic holomorphic vector field on $\camo_n$ is Hamiltonian, see our Lemma \ref{lem-camotopo}.

Calogero--Moser spaces have been a very interesting object of study in pure mathematics for a few decades. We follow the presentation given by Wilson \cite{MR1626461}. These spaces arise as a completion of the phase space of $n$ classical, indistinguishable, point-like particles moving in $\CC$ according to a Hamiltonian with a quadratic inverse potential. This prohibits any two particles from having the same spatial coordinate. The Calogero--Moser space $\camo_n$ is a completion of this phase space, allowing the particles to ``collide''. 

\begin{definition}
Let $\widetilde{\camo_n}$ be the subvariety of $\matnn \times \matnn$ given by 
\[
\rank([X,Y] + \id) = 1
\]
where $(X,Y) \in \matnn \times \matnn$. The group $\gl_n(\CC)$ acts on $\widetilde{\camo_n}$ by simultaneous conjugation in both factors:
\[
g \bullet (X,Y) = (g \cdot X \cdot g^{-1}, g \cdot Y \cdot g^{-1})
\]
for $g \in \gl_n(\CC)$.
We define the \emph{Calogero--Moser space} $\camo_n$ of $n$ particles to be the GIT-quotient $\widetilde{\camo_n}/\!/\gl_n(\CC)$.
\end{definition}

It is easy to see that the group action is well defined, since it leaves the condition 
$\rank([X,Y] + \id) = 1$ invariant. 

For $n = 1$ the matrices are just complex numbers, and we obtain $\camo_1 = \CC^2$.
In higher dimensions, the structure becomes more sophisticated. The Calogero--Moser space $\camo_n$ is a smooth complex-affine variety of dimension $2n$, see Wilson \cite{MR1626461}*{Section 1}. More recently, it has been shown by Popov \cite{MR3200435}*{Section 13, Remark 5} that $\camo_n$ is a rational variety.

The Calogero--Moser space is equipped with a symplectic holomorphic form and in fact carries a hyperkähler structure. Indeed, the Calogero--Moser space with its symplectic form is obtained as symplectic reduction of the preimage of a coadjoint orbit under a moment map, see Section \ref{sec-CaMo} for more details.

We will make use of the following well-known complete flow maps.
\begin{definition}
The \emph{Calogero--Moser flows} on $\camo_n$ are defined as follows:
\begin{align*}
(X,Y) \mapsto (X, Y + t X^k) \\
(X,Y) \mapsto (X + t Y^k, Y)
\end{align*}
where $k \in \NN_0$ and $t \in \CC$.
\end{definition}

It is easy to see that these are well-defined maps on $\camo_n$ since they are invariant under conjugation and leave the commutators $[X,Y]$ invariant, and that they are algebraic isomorphisms for each $t \in \CC$. Moreover, they leave the symplectic form invariant, see Section \ref{sec-CaMo}. 

It has been established by Berest and Wilson \cite{MR1785579} that the automorphism group acts transitively on the Calogero--Moser space. This was later improved to $2$-transitivity by Berest, Eshmatov and Eshmatov \cite{MR3459703} and finally to $m$-transitivity for any $m \in \NN$ (sometimes called \emph{infinite transitivity}) by Kuyumzhiyan \cite{MR4127820}. In fact, all these three results are slightly stronger, since they do not make use of the full group of automorphisms, but only of the subgroup generated by the Calogero--Moser flows which are in fact symplectic holomorphic automorphisms.

The density property for the Calogero--Moser spaces has been established by the first author \cite{MR4305975}. As a consequence, every holomorphic automorphism that is homotopic to the identity can be approximated by shears and overshears of Calogero--Moser flows. It also follows from that result that the group of holomorphic automorphisms acts $m$-transitively for any $m \in \NN$.

In this paper, we prove the analogous result for symplectic holomorphic automorphisms which turns out to be more involved. For the definition of Hamiltonian/symplectic holomorphic density property, see Section \ref{sec-dp}.

\begin{theorem}
\label{thm-hamiltonDP}
The Calogero--Moser space $\camo_n$, $n \in \NN$, has the Hamiltonian holomorphic density property.
\end{theorem}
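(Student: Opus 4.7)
The proof strategy parallels the one developed for $T^\ast \CC^n$ in Section \ref{sec-tcn} and begins with the identification of the Calogero--Moser flows as Hamiltonian flows. Up to sign conventions dictated by the symplectic reduction, the flow $(X, Y) \mapsto (X, Y + tX^k)$ is generated by the $\gl_n(\CC)$-invariant Hamiltonian $H_k^X := \tfrac{1}{k+1}\tr(X^{k+1})$, and $(X, Y) \mapsto (X + tY^k, Y)$ by $H_k^Y := \tfrac{1}{k+1}\tr(Y^{k+1})$; both are invariant under simultaneous conjugation and therefore descend to the GIT quotient $\camo_n$. The Hamiltonian Anders\'en--Lempert criterion developed in Section \ref{sec-dp} then reduces the theorem to showing that the closed Lie subalgebra $\overline{\mathfrak{L}} \subseteq \holo(\camo_n)/\CC$ generated by the families $\{H_k^X\}_{k \geq 0}$ and $\{H_k^Y\}_{k \geq 0}$ under the Poisson bracket equals all of $\holo(\camo_n)/\CC$.

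A direct computation using cyclicity of the trace yields $\{H_k^X, H_l^Y\} = \pm \tr(X^k Y^l)$. Iterating with further generators produces traces of arbitrary words in $X$ and $Y$, and an induction on word length, organized by a suitable lexicographic order, establishes that $\tr(w(X,Y)) \in \mathfrak{L}$ for every word $w$. By the classical matrix invariant theory of Sibirskii, Procesi and Razmyslov, the trace words generate $\CC[\widetilde{\camo_n}]^{\gl_n(\CC)} \cong \CC[\camo_n]$ as an algebra. To upgrade this algebraic generation to density of $\mathfrak{L}$ as a \emph{linear} subspace, one needs to approximate products $\tr(w_1)\tr(w_2)\cdots\tr(w_r)$ by elements of $\mathfrak{L}$. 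This will be achieved via the flow-pullback identity $K \circ \phi^H_t = \sum_{m \geq 0} \tfrac{t^m}{m!} \operatorname{ad}(H)^m K \in \overline{\mathfrak{L}}$ for complete $H \in \mathfrak{L}$ and arbitrary $K \in \mathfrak{L}$, combined with a Kaliman--Kutzschebauch-style argument producing the missing products from a compatible pair of complete Hamiltonian fields in the sense introduced in Section \ref{sec-dp}.

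Once $\overline{\mathfrak{L}}$ contains $\CC[\camo_n]$ modulo constants, density in $\holo(\camo_n)/\CC$ follows from the density of polynomial functions in holomorphic functions on the smooth affine variety $\camo_n$ (in the compact-open topology). Finally, Lemma \ref{lem-camotopo} guarantees that $H^1_{dR}(\camo_n) = 0$, so every holomorphic symplectic vector field on $\camo_n$ is Hamiltonian, and the Hamiltonian holomorphic density property follows. The \emph{main obstacle} is the middle step: recovering the multiplicative structure of $\CC[\camo_n]$ from the purely Lie-algebraic operations of the Poisson bracket. This step relies essentially on the completeness and algebraic nature of the Calogero--Moser flows, and on a careful matching of trace words with algebraic generators; the non-commutativity of $X$ and $Y$ inside the trace makes the combinatorics of the induction substantially more involved than in the $T^\ast \CC^n$ case of the previous section.
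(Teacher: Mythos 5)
Your outline matches the paper's strategy only at the coarsest level (complete invariant Hamiltonians, matrix invariant theory, Runge-type limits, Lemma \ref{lem-camotopo}), but the decisive middle step is not carried by the tools you invoke. The ring $\CC[\camo_n]$ consists of \emph{polynomials} in trace words, and since the Poisson bracket of two traces of monomials is again a linear combination of traces of single monomials (this is exactly Lemma \ref{lemma: commutator-general}), the Lie algebra generated by your proposed generators $\tr X^{k+1}$, $\tr Y^{k+1}$ alone stays inside the span of single trace words; producing products $\tr(w_1)\cdots\tr(w_r)$ is the real difficulty, and the paper stresses that one is \emph{not} allowed to multiply functions. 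Your two proposed mechanisms do not fill this gap: the flow-pullback series $K\circ\phi^H_t=\sum_m \tfrac{t^m}{m!}\operatorname{ad}(H)^mK$ only yields limits of Lie combinations (already in the closure by definition), and for $H=\tr X^{k+1}$ the flow is a shear, so the pullback of a single trace is again a sum of single traces; and there is no ``compatible pair'' notion ``introduced in Section \ref{sec-dp}''---that is a device from the ordinary density property for producing $f\cdot V$, with no established Hamiltonian analogue here. The paper instead enlarges the generating set to include the \emph{square} $(\tr X)^2$ (a complete ``overshear'' Hamiltonian, see Example \ref{example: CMHamVectFlow}), and obtains all products of traces $\tr X^pY^q$ by a long induction using the weight element $e=\tr XY$ (Lemma \ref{lemma: hep-hweightnon0} and its sequels, culminating in Theorem \ref{theorem: mfactors}).

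A second gap: your claim that iterated brackets give ``traces of arbitrary words'' by induction on word length is unsubstantiated---brackets produce \emph{sums} of traces of spliced words, and isolating an individual word trace is precisely what the paper avoids. It does so by exploiting the rank-one condition on $\widetilde{\camo_n}$: Lemma \ref{lemma: trMANA} and its consequences (Corollary \ref{corollary: MagicOnCM}, Proposition \ref{proposition: ReductionInvGenerator}) show that every word trace equals $\tr X^iY^j$ plus products of traces of degree lower by at least $4$, so only the functions $\tr X^iY^j$ and their products need to be generated, with a delicate degree-by-degree induction for $n>2$. Without either the square generator or this rank-condition reduction, your argument does not go through as sketched.
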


In fact, we will show that the Hamiltonian holomorphic density property can be established using only a finite number of complete Hamiltonian holomorphic vector fields that correspond to the four Hamiltonian functions $\tr Y, \tr Y^2, \tr X^3, (\tr X)^2$. Such properties have been established before by the first author for the case of $\CC^n$ \cite{MR3975669} and for $\mathrm{SL}_2(\CC)$ as well as $x y = z^2$ \cite{finite-lie-bis} with finitely many complete polynomial vector fields generating the Lie algebra of all polynomial vector fields.

\begin{corollary}
The Calogero--Moser space $\camo_n$, $n \in \NN$, has the symplectic holomorphic density property.
\end{corollary}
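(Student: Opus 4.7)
The plan is to derive the corollary directly from Theorem~\ref{thm-hamiltonDP} together with the cohomological observation recorded in Lemma~\ref{lem-camotopo}. The key point is that on $\camo_n$ the notions of symplectic holomorphic vector field and Hamiltonian holomorphic vector field essentially coincide, so the Hamiltonian holomorphic density property is at least as strong as the symplectic one.

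More concretely, I would argue as follows. First, recall that any Hamiltonian holomorphic vector field $X_H = \{H,\cdot\}$ is symplectic, since its Lie derivative annihilates the symplectic form by Cartan's formula together with $d\omega = 0$. In particular, every complete Hamiltonian holomorphic vector field is a complete symplectic holomorphic vector field, so the Lie algebra generated by complete symplectic holomorphic vector fields contains the Lie algebra generated by complete Hamiltonian holomorphic vector fields.

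Next, invoke Lemma~\ref{lem-camotopo}: since $H^{1}_{\mathrm{dR}}(\camo_n) = 0$, every closed holomorphic $1$-form on $\camo_n$ is exact, and hence every symplectic holomorphic vector field on $\camo_n$ is Hamiltonian. Consequently the Lie algebra of all symplectic holomorphic vector fields equals the Lie algebra of all Hamiltonian holomorphic vector fields on $\camo_n$.

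Combining the two observations, Theorem~\ref{thm-hamiltonDP} tells us that the Lie algebra generated by complete Hamiltonian holomorphic vector fields is dense (in the compact-open topology) in the Lie algebra of all Hamiltonian holomorphic vector fields, which is exactly the Lie algebra of all symplectic holomorphic vector fields. Since the corresponding algebra of complete \emph{symplectic} vector fields is a priori even larger, it too has dense Lie-algebraic span, establishing the symplectic holomorphic density property. There is really no obstacle here beyond unpacking the two definitions; the entire content sits in Theorem~\ref{thm-hamiltonDP} and in the vanishing of $H^{1}_{\mathrm{dR}}(\camo_n)$.
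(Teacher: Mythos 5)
Your argument is correct and is exactly the paper's proof, merely unpacked: the paper also deduces the corollary from Theorem~\ref{thm-hamiltonDP} together with Lemma~\ref{lem-camotopo}, using that on $\camo_n$ every symplectic holomorphic vector field is Hamiltonian while every (complete) Hamiltonian field is symplectic. Your more detailed write-up fills in the same two observations the paper leaves implicit, so there is nothing to add or correct.
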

\begin{proof}
This follows from the preceding theorem and Lemma \ref{lem-camotopo}.
\end{proof}

\begin{theorem}
\label{thm-camo-autos}
The identity component of the group of symplectic holomorphic automorphisms of $\camo_n$, $n \in \NN$, is the closure (in the topology of uniform convergence on compacts) of the group generated by the following symplectic holomorphic automorphisms:
\begin{align*}
(X,Y) &\mapsto (X + t \, \mathrm{id}, Y ) \\
(X,Y) &\mapsto (X + t \, Y, Y) \\
(X,Y) &\mapsto (X, Y + t \, X^2) \\
(X,Y) &\mapsto (X, Y + t \tr(X) \cdot \id ) 
\end{align*}
where $t \in \CC$.
\end{theorem}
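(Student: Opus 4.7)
The plan is to combine the Hamiltonian holomorphic density property of Theorem \ref{thm-hamiltonDP} with the Andersén--Lempert approximation theorem developed in Section \ref{sec-dp}, observing that the four automorphisms listed in the statement are precisely the time-$t$ flows of the Hamiltonian vector fields of the four Hamiltonian functions $\tr Y$, $\tr Y^2$, $\tr X^3$, $(\tr X)^2$ which (as announced in the discussion preceding Theorem \ref{thm-hamiltonDP}) suffice to establish that property. The argument should therefore amount to a careful reading-off: pair the four complete vector fields used in the density proof with their explicit integrated flows on $\camo_n$, and invoke the approximation machinery.

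Concretely, I would proceed in three steps. First, by Lemma \ref{lem-camotopo} the first de Rham cohomology of $\camo_n$ vanishes, so every symplectic holomorphic vector field on $\camo_n$ is Hamiltonian; this forces the identity component of the group of symplectic holomorphic automorphisms to coincide, in the topology of uniform convergence on compacts, with the identity component of the group of Hamiltonian holomorphic automorphisms. Second, using the description of the symplectic form on $\camo_n$ as the reduction of $\tr(\mathrm{d}Y \wedge \mathrm{d}X)$ on $\matnn \times \matnn$ (Section \ref{sec-CaMo}), I would compute the Hamiltonian vector fields of $\tr Y$, $\tr Y^2$, $\tr X^3$, $(\tr X)^2$ on the reduced space and verify directly that their flows, descended through the GIT quotient, are the four algebraic one-parameter families in the statement (up to harmless constants absorbed into the parameter $t \in \CC$). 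Third, the Andersén--Lempert theorem for the Hamiltonian holomorphic density property from Section \ref{sec-dp} says that every Hamiltonian holomorphic automorphism in the identity component is a limit, uniformly on compacts, of finite compositions of time-$t$ flows of the generating family of complete Hamiltonian vector fields used to certify the density property; combined with the first two steps this yields the theorem.

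The main technical obstacle is the second step, namely the bookkeeping through the symplectic reduction. One has to check that on the reduced space $\camo_n$---not merely on the preimage $\widetilde{\camo_n}$ of the coadjoint orbit in $\matnn \times \matnn$---the Hamiltonian flows of these four trace polynomials restrict to globally defined and complete maps, and that these coincide, as concrete formulas, with the four families stated. This requires keeping sign conventions and the normalization of the reduced symplectic form consistent with the conventions used in the proof of Theorem \ref{thm-hamiltonDP}. Everything else is routine: the passage from the Hamiltonian to the symplectic identity component is given by Lemma \ref{lem-camotopo}, and the approximation itself is a direct invocation of the abstract Andersén--Lempert package from Section \ref{sec-dp} applied to the finite generating set supplied by Theorem \ref{thm-hamiltonDP}.
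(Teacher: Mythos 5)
Your proposal is correct and follows essentially the same route as the paper: the paper's proof likewise combines Theorem \ref{thm-hamiltonDP} (Hamiltonian holomorphic density property with generators $\mathcal{F} = \{\tr Y, \tr Y^2, \tr X^3, (\tr X)^2\}$), the identification of the four listed automorphisms as the flows of these Hamiltonians (Example \ref{example: CMHamVectFlow}, via the symplectic-reduction remark and Remark \ref{remark: rankcondition}), Lemma \ref{lem-camotopo}, and the Anders\'en--Lempert Theorem \ref{thm-AL} applied to a path connecting a given automorphism in the identity component to the identity. The bookkeeping you flag as the main obstacle is exactly what the paper disposes of in Example \ref{example: CMHamVectFlow}.
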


We currently do not know whether the symplectic holomorphic automorphism group has more than one connected component.

\begin{corollary}
The group generated by the symplectic holomorphic automorphisms in the preceding theorem acts $m$-transitively on $\camo_n$ for any $m \in \NN$.
\end{corollary}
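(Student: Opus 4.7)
The plan is to deduce $m$-transitivity of the four-generator group $H$ from the Hamiltonian holomorphic density property (Theorem \ref{thm-hamiltonDP}) by a Chow--Rashevskii / Andersén--Lempert style argument: show that each $H$-orbit on $\camo_n^m$ minus all diagonals is open, then conclude by connectedness. The four flows in Theorem \ref{thm-camo-autos} are the Hamiltonian flows of (scalar multiples of) $\tr Y$, $\tr(Y^2)$, $\tr(X^3)$, $(\tr X)^2$, i.e.\ precisely the four Hamiltonians used in the proof of Theorem \ref{thm-hamiltonDP}. Denote by $V_1,V_2,V_3,V_4$ the corresponding Hamiltonian vector fields, by $\mathfrak{g}$ the Lie subalgebra they generate inside $\mathrm{VF}^{\mathrm{hol}}_{\mathrm{Ham}}(\camo_n)$, and by $H$ the subgroup of symplectic holomorphic automorphisms they generate. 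The diagonal action of $\mathfrak{g}$ on $\camo_n^m$ is given by $V\mapsto V^{(m)}$, where $V^{(m)}(p_1,\ldots,p_m)=(V(p_1),\ldots,V(p_m))$.

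The first step is infinitesimal $m$-transitivity: at any tuple $\mathbf{p}=(p_1,\ldots,p_m)$ of pairwise distinct points, the evaluation map $\mathfrak{g}\to \bigoplus_{i=1}^{m} T_{p_i}\camo_n$, $V\mapsto V^{(m)}(\mathbf{p})$, is surjective. Since $\camo_n$ is Stein, Cartan's theorem produces holomorphic Hamiltonians realising any prescribed $1$-jets at the $m$ points, hence the evaluation map on the whole space $\mathrm{VF}^{\mathrm{hol}}_{\mathrm{Ham}}(\camo_n)$ is surjective. Theorem \ref{thm-hamiltonDP} asserts that $\mathfrak{g}$ is dense in this space in the compact-open topology, so its image in the finite-dimensional target $\bigoplus_i T_{p_i}\camo_n$ is a dense $\CC$-linear subspace, and therefore equals the target.

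Next, one upgrades infinitesimal $m$-transitivity to the openness of each $H$-orbit on $\camo_n^m\setminus\Delta$. Pick $W_1,\ldots,W_{2nm}\in\mathfrak{g}$ whose values at $\mathbf{p}$ form a basis of $T_\mathbf{p}(\camo_n^m)$. Each $W_j$ is an iterated Lie polynomial in $V_1,\ldots,V_4$; using the classical commutator expansion $\phi^V_t\circ\phi^W_s\circ\phi^V_{-t}\circ\phi^W_{-s}=\phi^{[V,W]}_{ts}+O((|t|+|s|)^3)$ and its iterates, one manufactures, for each $W_j$, a holomorphic family of finite compositions of the four fundamental flows that realises $\phi^{W_j}_s$ to leading order in an auxiliary parameter. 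Concatenating these yields a holomorphic map $F\colon U\subset\CC^{N}\to\camo_n^m$ with $F(0)=\mathbf{p}$, image inside the $H$-orbit of $\mathbf{p}$, and derivative at $0$ surjective onto $T_\mathbf{p}(\camo_n^m)$. The inverse function theorem then places a neighbourhood of $\mathbf{p}$ inside that orbit.

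Finally, $\camo_n^m\setminus\Delta$ is connected: each partial diagonal has complex codimension $2n\geq 2$ inside the irreducible variety $\camo_n^m$. A cover of a connected space by disjoint nonempty open sets (the $H$-orbits) must consist of a single set, which gives $m$-transitivity. The main technical obstacle is the open-orbit step, namely extracting from the abstract density statement a concrete parameterised family of finite compositions of the four generating flows whose Jacobian at the origin surjects onto the full tangent space at $\mathbf{p}$. This is, however, a standard component of Andersén--Lempert theory and is closely analogous to the finite-generation arguments of the first author for $\CC^n$ in \cite{MR3975669} and for $\mathrm{SL}_2(\CC)$ and $xy=z^2$ in \cite{finite-lie-bis}.
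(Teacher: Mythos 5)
Your route is genuinely different from the paper's: the paper disposes of this corollary in one line, combining the symplectic holomorphic density property with Corollary \ref{cor-trans} (the standard Anders\'en--Lempert transitivity argument of Kaliman--Kutzschebauch), the point being that the automorphisms produced by Theorem \ref{thm-AL} are already finite compositions of the flows of the four generators, so the whole transitivity argument takes place inside the group they generate. Your replacement --- infinitesimal $m$-transitivity from Cartan interpolation plus density, openness of orbits, and connectedness of the configuration space --- is a legitimate alternative, and two of its three steps are sound: on a Stein manifold one can prescribe $1$-jets of a holomorphic function at finitely many points, so evaluation of all Hamiltonian fields at $m$ distinct points is surjective, density of $\mathfrak{g}$ then forces its image in the finite-dimensional target to be everything; and the complement of the partial diagonals in $\camo_n^m$ is connected, so a decomposition into disjoint open orbits must be trivial, which gives exact (not just approximate) $m$-transitivity.

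The gap is in the open-orbit step as you wrote it. The composition $\phi^V_t\circ\phi^W_s\circ\phi^V_{-t}\circ\phi^W_{-s}$ is the identity to first order in $(t,s)$; the bracket $[V,W]$ only appears at order $ts$. Hence for every $W_j$ that is a genuine iterated bracket of $V_1,\dots,V_4$, the holomorphic family you manufacture has vanishing derivative at the parameter origin, and the differential of your concatenated map $F$ at $0$ spans only the values of the $V_i$ themselves (and their linear combinations), not all of $T_{\mathbf{p}}(\camo_n^m)$; the inverse function theorem cannot be applied at $0$, so surjectivity of $dF_0$ is simply false for this construction. The standard repair is not the naive commutator expansion but the Nagano--Sussmann orbit theorem (orbits of a family of holomorphic vector fields are immersed submanifolds whose tangent spaces contain, and in the analytic case equal, the evaluations of the generated Lie algebra), or equivalently a maximal-rank argument: among all finite words in the four flows choose one whose evaluation map has maximal rank at some parameter value, obtain an open piece of the orbit there, and transport it to $\mathbf{p}$ by homogeneity of the orbit. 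Alternatively you can sidestep the issue entirely, as the paper does, by invoking the ``Moreover'' clause of Theorem \ref{thm-AL}, which already packages the passage from Lie combinations of the generators to finite compositions of their flows.
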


\begin{proof}
This follows from the symplectic holomorphic density property and Corollary \ref{cor-trans}.
\end{proof}

%
%
%
%

\section{Hamiltonian and Symplectic Holomorphic Density Property}
\label{sec-dp}

The density property and the volume density property for complex manifolds were introduced by Varolin \cites{MR1829353, MR1785520}. The Hamiltonian density property was also studied by Deng and Wold \cite{MR4423269}*{Section 3.1} in a recent article; they proved an Anders\'en--Lempert theorem with Carleman approximation. The symplectic Anders\'en--Lempert theorem was stated in \cite{MR4440754} for $\mathbb{C}^{2n}$. 

\begin{definition}
Let $X$ be a complex manifold with a symplectic holomorphic form $\omega$.
\begin{enumerate}
\item We call a holomorphic vector field $V$ on $X$ \emph{symplectic} if $\mathcal{L}_V \omega = 0$.
\item We call a holomorphic vector field $V$ on $X$ \emph{Hamiltonian} if there exists a holomorphic function $H \colon X \to \CC$, called the \emph{Hamiltonian function of $V$}, such that $i_V \omega = d H$. 
\end{enumerate}
\end{definition}

\begin{remark}
Since $\omega$ is closed, it follows from Cartan's homotopy formula that $V$ is symplectic if and only if $d i_V \omega = 0$. Further, this implies that every Hamiltonian vector field is symplectic.
The symplectic holomorphic vector fields and the Hamiltonian holomorphic vector fields on a complex manifold each form a Lie algebra.
\end{remark}

The holomorphic functions on $X$ form a Lie algebra under the \emph{Poisson bracket} $\{ \cdot , \cdot \}$. It is defined in such a way that the correspondence between Hamiltonian functions $H$ and $K$ and their respective Hamiltonian vector fields $V$ and $W$ respects the Lie algebra structure:
\[
i_{[V,W]} \omega = -d \{ H, K \}
\]
see the textbook of Arnol'd \cite{MR1345386}*{Section 40 Corollary 3, p.~215}.

\begin{definition}
Let $X$ be a complex manifold with a symplectic holomorphic form $\omega$.
\begin{enumerate}
\item We say that $(X, \omega)$ has the \emph{symplectic holomorphic density property} if the Lie algebra generated by the complete symplectic holomorphic vector fields on $X$ is dense (in the topology of locally uniform convergence) in the Lie algebra of all symplectic holomorphic vector fields on $X$.
\item We say that $(X, \omega)$ has the \emph{Hamiltonian holomorphic density property} if the Lie algebra generated by the complete Hamiltonian holomorphic vector fields on $X$ is dense (in the topology of locally uniform convergence) in the Lie algebra of all Hamiltonian holomorphic vector fields on $X$.
\end{enumerate}
\end{definition}

\begin{remark}
We consider the space of functions $\holo(X) = \{ f \colon X \to \CC \; \text{holomorphic} \}$ as a Lie algebra with the Poisson bracket $\{\cdot,\cdot\}$. 
Note that the Hamiltonian density property is equivalent to stating that this Lie algebra contains a dense (in the topology of locally uniform convergence) Lie sub-algebra that is generated by those functions that correspond to complete Hamiltonian vector fields. To see this, we only need to observe that for holomorphic functions the locally uniform convergence implies also the locally uniform convergence of the derivatives due to Cauchy estimates.

We also emphasize that we only work with the Lie algebra structure, and not with the Poisson algebra structure, i.e.\ we are not allowed to multiply functions. The conclusions of the density property only hold when working with Lie combinations of the corresponding vector fields.
\end{remark}

In the holomorphic case, i.e.\ without extra conditions about being symplectic or Hamiltonian and in the volume preserving holomorphic case, the following theorem is due to Anders{\'e}n and Lempert \cite{MR1185588}, Forstneri\v{c} and Rosay \cites{MR1213106, MR1296357} and Varolin \cite{MR1829353}.

\begin{theorem}
\label{thm-AL}
Let $X$ be a Stein manifold with a symplectic holomorphic form $\omega$. Assume that $(X, \omega)$ has the symplectic holomorphic density property or the Hamiltonian holomorphic density property.

Let $\Omega \subset X$ be an open Stein subset with holomorphic de Rham cohomology $H^1(\Omega, \CC) = 0$ and let $\varphi_t \colon \Omega \to X, t \in [0,1],$ be a jointly $\cont^1$-smooth map such that the following holds:
\begin{enumerate}
\item $\varphi_0 \colon \Omega \to X$ is the natural embedding.
\item $\varphi_t \colon \Omega \to X$ is a symplectic holomorphic injection for each $t \in [0,1]$.
\item $\varphi_t(\Omega)$ is a Runge subset of $X$ for each $t \in [0,1]$.
\end{enumerate}
Then for every compact $K \subset \Omega$ and every $\varepsilon > 0$ and every choice of metric on $X$ that induces its topology, there exists
a continuous family $\Phi_t \colon X \to X$ of symplectic holomorphic automorphisms  such that
\[
\sup_{x \in K} d(\varphi_1(x), \Phi_1(x)) < \varepsilon
\]
Moreover, $\Phi_t$ can be written as a finite composition of flows of complete vector fields that are generators of the Lie algebra of symplectic resp. Hamiltonian holomorphic vector fields on $X$.
\end{theorem}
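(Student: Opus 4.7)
The plan is to adapt the classical Andersén--Lempert scheme to the symplectic/Hamiltonian category. First I would differentiate the isotopy to obtain, on $U_t := \varphi_t(\Omega)$, the time-dependent holomorphic vector field
\[
V_t := \left(\tfrac{d}{dt}\varphi_t\right) \circ \varphi_t^{-1},
\]
which is $\cont^0$ in $t$ and holomorphic in the space variable. Since each $\varphi_t$ is a symplectic holomorphic injection, $\mathcal{L}_{V_t}\omega = 0$ on $U_t$, so $i_{V_t}\omega$ is a closed holomorphic $1$-form. Because $\varphi_t$ is a biholomorphism onto its image, $H^1(U_t,\CC) \cong H^1(\Omega,\CC) = 0$, hence $i_{V_t}\omega = dH_t$ for a holomorphic Hamiltonian function $H_t$ on $U_t$, depending continuously on $t$ (and uniquely up to an additive constant that can be pinned down on a chosen component).

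Next I would pass from local to global via Runge approximation. Since $U_t \subset X$ is Runge in the Stein manifold $X$, the function $H_t$ can be approximated uniformly on compacts of $U_t$ by holomorphic functions $\widetilde{H}_t \in \holo(X)$. The corresponding global Hamiltonian vector field $\widetilde{V}_t$ on $X$ then approximates $V_t$ uniformly on compacts together with all derivatives (by Cauchy estimates on a slightly larger compact). This already suffices in the Hamiltonian case; for the symplectic case we simply note that Hamiltonian vector fields on $X$ are a fortiori symplectic, so $\widetilde{V}_t$ belongs to the Lie algebra addressed by the symplectic density hypothesis.

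Now I would run the usual Euler/splitting argument. Partition $[0,1]$ into $0 = s_0 < s_1 < \cdots < s_N = 1$ of mesh size $\delta = 1/N$. On each subinterval $[s_j, s_{j+1}]$, approximate $\widetilde{V}_{s_j}$ by a Lie combination $W_j$ of complete symplectic (resp.\ Hamiltonian) holomorphic vector fields provided by the density hypothesis. The time-$\delta$ flow of $W_j$ approximates the flow of $V_t$ from $s_j$ to $s_{j+1}$ on the current compact set, up to an error of order $O(\delta^2)$ per step, summing to $O(\delta) \to 0$. Finally, I would express the time-$\delta$ flow of each Lie combination $W_j$ as a finite composition of flows of the generators, using the standard commutator identities
\[
\exp(s[V,W]) = \lim_{k\to\infty}\bigl(\exp(\sqrt{s/k}\,V)\exp(\sqrt{s/k}\,W)\exp(-\sqrt{s/k}\,V)\exp(-\sqrt{s/k}\,W)\bigr)^k
\]
and their higher-iterated analogues for longer brackets, each factor of which preserves the symplectic/Hamiltonian class. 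Composing the resulting finite compositions over $j=0,\dots,N-1$ yields the desired $\Phi_1$.

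The main obstacle is the bookkeeping of the inductive step: at each stage one must ensure that the already-constructed automorphism has moved the compact $K$ to a set that still lies in a Stein, Runge subset of $X$ on which the next $V_{s_j}$ is defined, so that the next Runge step applies, and simultaneously that the accumulated errors (from Runge approximation, from density-property approximation, from commutator approximation, and from the Euler discretization) remain bounded by a prescribed fraction of $\varepsilon$. This is handled by the familiar geometric-series trick — choosing the tolerance at step $j$ as $\varepsilon \cdot 2^{-j}$ — combined with continuity of $V_t$ in $t$ on slightly enlarged compacts, exactly as in the treatments of Forstneri\v{c}--Rosay and Varolin, with the only new ingredient being the cohomological argument $H^1(U_t,\CC)=0$ that converts symplectic data into Hamiltonian data at every stage.
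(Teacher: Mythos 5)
Your proposal is correct and follows essentially the same route as the paper, which simply cites the standard Anders\'en--Lempert scheme (as in the volume density property case, cf.\ Kaliman--Kutzschebauch) with the key modification you identify: using $H^1(\Omega,\CC)=0$ to convert the symplectic field of the isotopy into a Hamiltonian function and then applying Runge approximation directly to that function. The paper's proof is only a sketch by reference; your write-up fills in the same steps (differentiation of the isotopy, Runge step, Euler discretization, commutator approximation of Lie combinations) in more detail.
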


\begin{proof}
The proof is basically the same as for the case of volume density property which in turn is a modification of the proof for the density property, see Kaliman and Kutzschebauch \cite{MR2768636}*{Remark 2.2}; We only need to replace the $n$-form by the closed $2$-form $\omega$.
Since we require the first holomorphic de Rham cohomology to vanish, we can assume that any symplectic vector field on $\Omega$ is Hamiltonian. This is analogous to the case of the volume density property where the $n$-form induces a correspondence between vector fields and $(n-1)$-forms through $i_V \omega = d \eta$. In that case, $n \geq 2$ and $H^{n-1}(X, \CC) = 0$ is required in order to write every closed $(n-1)$-form $\eta$ as the exterior derivative of an $(n-2)$-form $\zeta$, and then Runge approximation is used for $\zeta$. In our case, this is simply the correspondence between the Hamiltonian function and its Hamiltonian vector field, and we can directly use Runge approximation for the Hamiltonian function. 
\end{proof}

The following corollary then follows again the same way as for the volume density property, see Kaliman and Kutzschebauch \cite{MR2768636}*{Remark 2.2}.

\begin{corollary}
\label{cor-trans}
Let $X$ be a Stein manifold with a symplectic holomorphic form $\omega$. Assume that $(X, \omega)$ has the symplectic holomorphic density property or the Hamiltonian holomorphic density property. Then the group of symplectic holomorphic automorphisms acts $m$-transitively on $X$ for any $m \in \mathbb{N}$. 
\end{corollary}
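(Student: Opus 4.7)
The plan is to derive $m$-transitivity from Theorem \ref{thm-AL} by establishing that the orbit of every $m$-tuple of pairwise distinct points under the group of symplectic holomorphic automorphisms is open in the configuration space $F_m(X) := \{(x_1,\dots,x_m) \in X^m : x_i \neq x_j \text{ whenever } i\neq j\}$. Since $\omega$ is non-degenerate on the holomorphic tangent space, the complex dimension of $X$ is even and at least $2$, so $F_m(X)$ is path-connected for every $m \in \NN$; openness of one orbit then makes it clopen and, by connectedness, equal to all of $F_m(X)$.

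To prove openness, fix $(p_1,\dots,p_m) \in F_m(X)$. By the holomorphic Darboux theorem, choose pairwise disjoint open Stein neighborhoods $U_1,\dots,U_m$ of the $p_i$, each biholomorphic to a polydisc on which $\omega$ is in standard form, and so small that $\Omega := U_1 \sqcup \dots \sqcup U_m$ is Runge in $X$; since $\Omega$ is a disjoint union of contractible pieces, $H^1(\Omega,\CC) = 0$. For any $(q_1,\dots,q_m)$ sufficiently close to $(p_1,\dots,p_m)$, define $\varphi_t \colon \Omega \to X$ on each component $U_i$ as the Darboux-chart translation $x \mapsto x + t(q_i - p_i)$; this is a symplectic holomorphic injection for every $t\in[0,1]$, and $\varphi_t(\Omega)$ remains a Runge neighborhood of the finite (hence $\holo(X)$-convex) set $\{\varphi_t(p_1),\dots,\varphi_t(p_m)\}$. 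Theorem \ref{thm-AL} applied with compact set $K = \{p_1,\dots,p_m\}$ then yields, for every $\varepsilon > 0$, a symplectic holomorphic automorphism $\Phi$ of $X$ with $d(\Phi(p_i),q_i) < \varepsilon$ for all $i$.

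The main obstacle is upgrading this approximate matching to the exact equalities $\Phi(p_i) = q_i$. I would do this by an inductive correction scheme: construct a sequence of symplectic holomorphic automorphisms $\Phi_k$ with $\Phi_k(p_i)$ within $\varepsilon_k$ of $q_i$ by applying the same local-to-global construction at step $k$ to an isotopy that cancels the residual error of $\Phi_{k-1}$, localizing each correction $\Psi_k := \Phi_k \circ \Phi_{k-1}^{-1}$ inside shrinking Darboux neighborhoods of the $q_i$ of diameter comparable to $\varepsilon_k$. Choosing $\varepsilon_k$ geometrically decreasing, the infinite composition converges locally uniformly to a symplectic holomorphic automorphism $\Phi$ satisfying $\Phi(p_i) = q_i$ exactly. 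The bookkeeping needed to preserve the Runge, disjointness and support conditions through the iteration is precisely the content of Kaliman--Kutzschebauch \cite{MR2768636}*{Remark 2.2}; a conceptually cleaner alternative would be a jet-interpolation strengthening of Theorem \ref{thm-AL} forcing $\Phi$ to attain prescribed values at any given finite set, which would bypass the iteration altogether.
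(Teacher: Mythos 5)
The strategy of your first two paragraphs (disjoint Darboux balls, translations in the charts, Theorem \ref{thm-AL} with $K=\{p_1,\dots,p_m\}$) is the standard one and yields approximate $m$-transitivity; the genuine gap is in the step you yourself flag as the main obstacle. Your inductive correction scheme does not close it as described. Theorem \ref{thm-AL} gives approximation of the isotopy only on the compact $K$ and gives \emph{no} control of the automorphism away from the shrinking Darboux neighborhoods of the $q_i$; hence the corrections $\Psi_k$ are not close to the identity on any exhaustion of $X$, and the infinite composition $\cdots\circ\Psi_2\circ\Psi_1$ need not converge anywhere off those balls. Even if you repair this by enlarging $\Omega$ at each step to contain a large Runge compact on which the isotopy is the identity (so that $\Psi_k$ is $\varepsilon_k$-close to $\mathrm{id}$ on an exhaustion with $\sum_k\varepsilon_k<\infty$), a locally uniform limit of automorphisms is in general only an injective holomorphic map onto a possibly proper, Fatou--Bieberbach-type subdomain; without additional two-sided estimates (also on the inverses) you do not obtain an automorphism of $X$, so the exact equalities $\Phi(p_i)=q_i$ are not achieved by an element of the group. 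The alternative you mention, a jet-interpolation strengthening of Theorem \ref{thm-AL}, would indeed suffice, but it is not stated in the paper and would itself require proof in the symplectic/Hamiltonian setting.

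The argument the paper intends, via the reference to Kaliman--Kutzschebauch \cite{MR2768636}, closes the gap by a finite-dimensional open-orbit argument instead of a limiting process. Since $X$ is Stein, one can prescribe the $1$-jets of a holomorphic function at the finitely many points $p_1,\dots,p_m$, and nondegeneracy of $\omega$ then shows that Hamiltonian vector fields realize any prescribed tangent vectors at these points; by the density property, finitely many Lie combinations $W_1,\dots,W_N$ of the \emph{complete} generating fields have values spanning $\bigoplus_{i=1}^m T_{p_i}X$. The values of such Lie combinations at the tuple are tangent to the orbit of $(p_1,\dots,p_m)$ under the group generated by the flows of the complete fields, so the map sending flow times to the image tuple is a submersion near $0$; hence every orbit in $F_m(X)$ is open, all orbits are therefore clopen, and connectedness of $F_m(X)$ (here your observation $\dim_\CC X\ge 2$ is used) gives exact $m$-transitivity directly, with no approximation, no correction scheme, and no infinite composition. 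If you replace your third paragraph by this argument, the proof is complete and matches the intended one.
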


\section{Warm-Up: $T^\ast \CC^n$}
\label{sec-tcn}

The group of symplectic holomorphic automorphisms of $T^\ast \CC^n \cong \CC^{2n}$ with the standard symplectic holomorphic form has been described already by Forstneri\v{c} \cite{MR1408866}. However, that proof did not make full use of the Lie algebra structure. Here, we first prove the symplectic holomorphic density property for $T^\ast \CC^n \cong \CC^{2n}$ and then obtain the description of the symplectic holomorphic automorphism group as direct consequence.

Let $(x, y) \in \CC^n \times \CC^n \cong T^\ast \CC^n$ and let $\omega = \sum_{k=1}^{n} dx_k \wedge dy_k$ be the canonical symplectic form on $T^\ast \CC^n$. The Poisson bracket is then given by $\{ x_j, y_k \} = \delta_{jk}$, $\{ x_j, x_k \} = 0$ and $\{ y_j, y_k \} = 0$.

We first consider the following Hamiltonian functions and the corresponding Hamiltonian vector fields that are complete:
\begin{align}
&H = \frac{-1}{p+1} x_k^{p+1} &\Longrightarrow V = x_k^p \frac{\partial}{\partial y_k} \label{eq-Cn-Hamilton1} \\
&H = \frac{1}{q+1} y_k^{q+1} &\Longrightarrow V = y_k^q \frac{\partial}{\partial x_k} \label{eq-Cn-Hamilton2} \\
&H = -x_j x_k &\Longrightarrow V = x_j \frac{\partial}{\partial y_k} + x_k \frac{\partial}{\partial y_j} \label{eq-Cn-Hamilton3} \\
&H = y_j y_k &\Longrightarrow V = y_j \frac{\partial}{\partial x_k} + y_k \frac{\partial}{\partial x_j} \label{eq-Cn-Hamilton4}
\end{align}

\begin{theorem}
\label{thm-cn}
$T^\ast \CC^n$ has the Hamiltonian/symplectic density property. A dense Lie-subalgebra is generated by the vector fields from Equations \eqref{eq-Cn-Hamilton1}, \eqref{eq-Cn-Hamilton2}, \eqref{eq-Cn-Hamilton3} and \eqref{eq-Cn-Hamilton4}.
\end{theorem}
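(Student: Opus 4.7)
The plan is to use the characterization of the Hamiltonian density property given in the Remark after Theorem~\ref{thm-AL}: it suffices to show that the Lie subalgebra $\mathcal{L}$ of $(\holo(\CC^{2n}), \{\cdot, \cdot\})$ generated by the four listed families of Hamiltonians is dense (modulo additive constants, which are central for the Poisson bracket) in $\holo(\CC^{2n})$. By the Oka--Weil theorem, polynomials are dense in $\holo(\CC^{2n})$, so the problem reduces to showing that $\mathcal{L}$ contains every polynomial in $x, y$ up to constants. Completeness of the four candidate vector fields is immediate from the explicit shear-type flows.

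The argument is organized by the grading of $\CC[x,y]$ by total degree, with $\mathcal{P}_m$ denoting the degree-$m$ piece. The Poisson bracket respects this grading in the sense that $\{\mathcal{P}_p, \mathcal{P}_q\} \subset \mathcal{P}_{p+q-2}$, and $\mathcal{P}_2$ equipped with $\{\cdot, \cdot\}$ is naturally the Lie algebra $\mathfrak{sp}(2n,\CC)$ acting on the standard representation $\mathcal{P}_1 \cong \CC^{2n}$, hence also on each symmetric power $\mathcal{P}_m \cong S^m(\CC^{2n})$. The first substantive step is to verify $\mathcal{P}_2 \subset \mathcal{L}$: besides the given generators $x_j x_k$ and $y_j y_k$, the cross terms $x_j y_k$ arise from brackets such as $\{x_j x_k, y_k^2\} = 2 x_j y_k$ for $j \neq k$ and $\{x_j^2, y_j^2\} = 4 x_j y_j$. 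Consequently $\mathcal{L}$ contains all of $\mathfrak{sp}(2n,\CC)$.

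Next I would climb the grading degree by degree: for each $m \geq 3$ the generator $x_1^m$ is a nonzero element of $\mathcal{L} \cap \mathcal{P}_m$, and since $\mathcal{L}$ already contains $\mathfrak{sp}(2n,\CC)$, it also contains the entire $\mathfrak{sp}(2n,\CC)$-submodule of $\mathcal{P}_m$ generated by $x_1^m$. Here enters the structural input I expect to be the main obstacle to invoke cleanly: the classical fact that $S^m(\CC^{2n}) = V(m\omega_1)$ is an irreducible representation of $\mathfrak{sp}(2n,\CC)$. The geometric reason is that the invariant bilinear form on the standard representation is skew, so it lives in $\Lambda^2$ rather than $S^2$ and there is no $\mathfrak{sp}$-equivariant contraction $\mathcal{P}_m \to \mathcal{P}_{m-2}$. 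Granting irreducibility, the submodule generated by $x_1^m$ exhausts $\mathcal{P}_m$, and summing over $m$ yields $\CC[x,y] \subset \mathcal{L} + \CC$, hence density in $\holo(\CC^{2n})$. This gives the Hamiltonian density property; because $H^1(\CC^{2n}, \CC) = 0$ every symplectic holomorphic vector field on $\CC^{2n}$ is Hamiltonian, and the symplectic version follows immediately.
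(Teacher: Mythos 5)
Your proposal is correct, but it takes a genuinely different route from the paper. Both arguments share the same reduction (completeness of the four generating families is clear from their shear-type flows, and by Taylor expansion/Runge density it suffices to capture all polynomial Hamiltonians modulo constants) and essentially the same small bracket computations placing the full quadratic space $\mathcal{P}_2$ inside the generated Lie algebra. From there the paper proceeds entirely by hand: it produces $x_k^{p-1}y_k^{q-1}$ from $\{x_k^p, y_k^q\}$ and then runs a double induction over the variables and their exponents, bracketing against the quadratic generators $x_k x_{k+1}$ and $x_k y_{k+1}$ to manufacture every monomial explicitly. You instead exploit the grading $\{\mathcal{P}_p,\mathcal{P}_q\}\subset\mathcal{P}_{p+q-2}$, identify $\mathcal{P}_2\cong\mathfrak{sp}(2n,\CC)$ acting on $\mathcal{P}_m\cong S^m(\CC^{2n})$, and use the classical irreducibility of $S^m(\CC^{2n})$ (highest weight $m\omega_1$) to conclude that the $\mathfrak{sp}$-submodule generated by the available element $x_1^m$ exhausts $\mathcal{P}_m$; since the Lie algebra generated is closed under iterated brackets with $\mathcal{P}_2$ and linear combinations, this is sound. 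One caveat: the heuristic you offer for irreducibility --- that the invariant form is skew, so there is no equivariant contraction $S^m\to S^{m-2}$ --- only removes the obvious obstruction and is not itself a proof, so the irreducibility statement must genuinely be quoted from the literature (it is standard, e.g.\ in Fulton--Harris). As for what each approach buys: the paper's computation is elementary and self-contained, and its explicit bookkeeping of which brackets are used matches the spirit of the later finite-generation arguments for $\camo_n$; yours is shorter and more conceptual, and explains structurally why the mechanism works for the symplectic case, where symmetric powers of the standard representation do not decompose.
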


\begin{proof} It suffices to find all monomial Hamiltonian functions, as we can then take linear combinations and limits.
\begin{enumerate}
\item The only missing terms of pure degree two are obtained by
$\{ x_k^2, y_k^2 \} = 4 x_k y_k$ and $\{ x_k^2, y_j y_k \} = 2 x_k y_j$.
\item For each index $k = 1, \dots, n$ we obtain
\[
\{ x_k^p, y_k^q \} = 2pq x_k^{p-1} y_k^{q-1}
\]
\item We proceed by induction in $k=1, \dots, n$. If certain variables do not appear at all, we can renumber the indices accordingly. Assume that we have already all monomials in the variables $x_1, y_1, \dots, x_k, y_k$. The base case $k=1$ is obvious. For the induction step from $k$ to $k+1$ we start with powers $p(k+1) = 0$ and $q(k+1) = 0$, and proceed again by induction first in $p(k+1)$ and then in $q(k+1)$:
\begin{align*}
&\{ x_1^{p(1)} y_1^{q(1)} \cdots x_k^{p(k)} y_k^{q(k)} \cdot x_{k+1}^{p(k+1)}, x_k x_{k+1} \} \\
&= - q(k) \cdot x_1^{p(1)} y_1^{q(1)} \cdots x_k^{p(k)} y_k^{q(k)-1} x_{k+1}^{p(k+1)+1} \\
&\{ x_1^{p(1)} y_1^{q(1)} \cdots x_k^{p(k)} y_k^{q(k)} \cdot x_{k+1}^{p(k+1)} \cdot y_{k+1}^{q(k+1)}, x_k y_{k+1} \} \\ 
&= - q(k) \cdot x_1^{p(1)} y_1^{q(1)} \cdots x_k^{p(k)} y_k^{q(k)-1} x_{k+1}^{p(k+1)} \cdot y_{k+1}^{q(k+1)+1} \\
&+ p(k+1) \cdot x_1^{p(1)} y_1^{q(1)} \cdots x_k^{p(k)+1} y_k^{q(k)} x_{k+1}^{p(k+1)-1} \cdot y_{k+1}^{q(k+1)} \qedhere
\end{align*}
\end{enumerate}
\end{proof}

\begin{corollary}[\cite{MR1408866}*{Theorem 5.1}]
Every symplectic holomorphic automorphism of $T^\ast \CC^n$ can be approximated uniformly on compacts by compositions of symplectic shears.
\end{corollary}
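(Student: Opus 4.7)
The plan is to combine Theorem \ref{thm-cn} with the Anders\'en--Lempert theorem (Theorem \ref{thm-AL}) applied with $\Omega = X = \CC^{2n}$, which is Stein and satisfies $H^1(\CC^{2n}, \CC) = 0$. Each Hamiltonian function in \eqref{eq-Cn-Hamilton1}--\eqref{eq-Cn-Hamilton4} depends only on the $x$-variables or only on the $y$-variables, so the time-$t$ flow of the corresponding Hamiltonian vector field is a symplectic shear of the form $(x,y) \mapsto (x,\, y + t\, p(x))$ or $(x,y) \mapsto (x + t\, q(y),\, y)$ for a suitable polynomial map $p$ or $q$. Hence the corollary will follow once we can connect an arbitrary symplectic holomorphic automorphism $F$ of $\CC^{2n}$ to the identity by a jointly $\cont^1$-smooth isotopy through symplectic holomorphic automorphisms.

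I would build such an isotopy in three stages. First, composing with the translation by $-F(0)$, which is a symplectic automorphism isotopic to the identity by linear scaling of the translation vector, reduces to the case $F(0) = 0$. Next, I employ the Alexander-type rescaling $\varphi_t(z) := F(tz)/t$ for $t \in (0,1]$, extended by $\varphi_0 := dF|_0$. The chain rule gives $d\varphi_t|_z = dF|_{tz}$, and since the standard symplectic form has constant coefficients, the identity $F^\ast\omega = \omega$ implies $\varphi_t^\ast\omega = \omega$ for every $t \in [0,1]$; joint $\cont^1$-smoothness in $(t,z)$ across $t = 0$ follows from the analyticity of $F$. This deforms $F$ to the linear symplectic map $dF|_0 \in \mathrm{Sp}(2n,\CC)$. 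Finally, since $\mathrm{Sp}(2n,\CC)$ is connected, I concatenate with a smooth path from $dF|_0$ to the identity and reparametrise time so that the three stages glue smoothly.

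With the isotopy $\varphi_t$ in hand, each $\varphi_t$ is a symplectic holomorphic automorphism of $\CC^{2n}$, hence in particular a symplectic holomorphic injection with (trivially) Runge image. Theorem \ref{thm-AL} then yields a composition $\Phi_1$ of flows of the generators \eqref{eq-Cn-Hamilton1}--\eqref{eq-Cn-Hamilton4} approximating $\varphi_1 = F$ uniformly on any prescribed compact subset; as noted above, each such flow is a symplectic shear, which gives the corollary. The only delicate point I foresee is the $\cont^1$-smooth gluing of the three stages of the isotopy, in particular the extension of the Alexander rescaling across $t = 0$; both are standard once the formula $d\varphi_t|_z = dF|_{tz}$ is in place, and the deeper analytical input has already been supplied by Theorem \ref{thm-cn}.
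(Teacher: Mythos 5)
Your proposal is correct and follows essentially the same route as the paper: invoke Theorem \ref{thm-cn} together with the symplectic Anders\'en--Lempert theorem (Theorem \ref{thm-AL}) and observe that the flows of the generators \eqref{eq-Cn-Hamilton1}--\eqref{eq-Cn-Hamilton4} are symplectic shears. The only difference is that you spell out the isotopy connecting an arbitrary symplectic holomorphic automorphism to the identity (translation, Alexander rescaling, path in $\mathrm{Sp}(2n,\CC)$), a step the paper's one-line proof leaves implicit, and your construction of it is sound.
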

\begin{proof}
We only need to notice that the flows of the Hamiltonian vector fields in Equations \eqref{eq-Cn-Hamilton1}, \eqref{eq-Cn-Hamilton2}, \eqref{eq-Cn-Hamilton3} and \eqref{eq-Cn-Hamilton4} are symplectic shears and then apply the Anders{\'e}n--Lempert theorem.
\end{proof}

\section{The Calogero--Moser space}
\label{sec-CaMo}

We follow the definition given by Wilson \cite{MR1626461}.

\begin{lemma}\cite{MR1626461}*{Proposition 1.10}
Let $(X,Y) \in \matnn \times \matnn$ such that $\rank([X, Y] + \id) = 1$. If $X$ is diagonalizable, then all eigenvalues of $X$ are pairwise different, and there exists $g \in \gl_n(\CC)$ such that 
\begin{equation}
\label{coordWilson}
\begin{split}
&(g X g^{-1}, g Y g^{-1}) = \\
&\left( \begin{pmatrix}
\alpha_1 \\
& \alpha_2 \\
& & \ddots \\
& & & \alpha_n 
\end{pmatrix} \right., \\
&\left. \begin{pmatrix}
\beta_1 & (\alpha_1 - \alpha_2)^{-1} & \dots & (\alpha_1 - \alpha_n)^{-1} \\
(\alpha_2 - \alpha_1)^{-1} & \beta_2 & \ddots & \vdots \\
\vdots & \ddots & \ddots & (\alpha_{n-1} - \alpha_{n})^{-1} \\
(\alpha_n - \alpha_1)^{-1} & \dots & (\alpha_{n} - \alpha_{n-1})^{-1} & \beta_n
\end{pmatrix}
  \right)
\end{split}
\end{equation}
for $\alpha_1, \dots, \alpha_n, \beta_1, \dots, \beta_n \in \CC$ with $\alpha_j \neq \alpha_k$ for $j \neq k$. 
Moreover, $(g([X, Y] + \id) g^{-1})_{jk} = 1$ for all $j,k = 1, \dots, n$.
\end{lemma}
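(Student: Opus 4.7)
The plan is a direct linear-algebra computation exploiting the rank-one condition. First I would use the diagonalizability hypothesis to reduce to the case where $X = \mathrm{diag}(\alpha_1,\dots,\alpha_n)$, at the cost of conjugating $Y$ by the same matrix. The residual gauge freedom that preserves this normal form for $X$ is conjugation by diagonal and by permutation matrices; I would keep this freedom in reserve for later.

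With $X$ diagonal, the commutator has the explicit entries $[X,Y]_{jk} = (\alpha_j-\alpha_k)\,Y_{jk}$. In particular $[X,Y]$ has vanishing diagonal, so the diagonal of $M := [X,Y]+\mathrm{id}$ is identically $1$. Since $\mathrm{rank}(M)=1$, write $M = v w^{\!\top}$ for some nonzero $v,w\in\CC^n$. The diagonal condition gives $v_j w_j = 1$ for every $j$, so in particular every $v_j$ and every $w_k$ is nonzero.

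Now compare off-diagonal entries: for $j\neq k$ one has $(\alpha_j-\alpha_k)\,Y_{jk} = v_j w_k$, and the right-hand side is nonzero. This forces $\alpha_j\neq\alpha_k$ whenever $j\neq k$, proving that the eigenvalues of $X$ are pairwise distinct, and then yields the formula
\[
Y_{jk} = \frac{v_j w_k}{\alpha_j - \alpha_k} \qquad (j\neq k).
\]
At this point I would use the remaining gauge freedom: conjugating by $D = \mathrm{diag}(d_1,\dots,d_n)$ preserves the form of $X$ and transforms $v \mapsto Dv$, $w \mapsto D^{-\!\top}w$, so setting $d_j := v_j^{-1}$ replaces $v$ by $(1,\dots,1)^{\!\top}$ and, by the relation $v_j w_j = 1$, also replaces $w$ by $(1,\dots,1)^{\!\top}$. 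This simultaneously normalizes $M$ to have every entry equal to $1$ and reduces the off-diagonal entries of $Y$ to $(\alpha_j-\alpha_k)^{-1}$, leaving the diagonal entries $\beta_j := Y_{jj}$ free; this is exactly the claimed normal form.

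There is essentially no hard step here: once the rank-one factorization is combined with the vanishing of the diagonal of $[X,Y]$, everything falls out. The only minor subtlety is verifying that the diagonal gauge $D$ suffices to normalize both $v$ and $w$ at once, but this is immediate from $v_j w_j = 1$; no residual symmetry or further normalization is needed.
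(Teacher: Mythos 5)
Your argument is correct and complete: with $X$ diagonalized, the vanishing diagonal of $[X,Y]$ forces the rank-one factorization $M=vw^{\top}$ to satisfy $v_jw_j=1$, which both yields the distinctness of the $\alpha_j$ and lets the diagonal gauge $D=\mathrm{diag}(v_1^{-1},\dots,v_n^{-1})$ normalize $M$ to the all-ones matrix. The paper gives no proof of its own, citing Wilson's Proposition 1.10, and your computation is essentially the same standard argument used there, so there is nothing to add.
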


Note that the order of the eigenvalues $\alpha_1, \dots, \alpha_n$ is arbitrary, hence we obtain an $n!$-to-$1$ covering of the open and dense subset of $\camo_n$ where the matrices $X$ are diagonalizable. Alternatively, we can define the injective mapping
\[
\Phi \colon \left( \{ (\alpha_1, \dots \alpha_n) \in \CC^n \, : \, \alpha_j \neq \alpha_k \text{ for } j \neq k \} \times \CC^n \right) / S_n \to \camo_n
\]
that maps $((\alpha_1, \dots, \alpha_n), (\beta_1, \dots, \beta_n))$ to $(X,Y)$ according to to Equation \eqref{coordWilson}. Here, $S_n$ denotes the symmetric group that acts by simultaneous permutations on $(\alpha_1, \dots, \alpha_n)$ and $(\beta_1, \dots, \beta_n)$. We will refer to $\Phi$ as the \emph{Wilson coordinates}.

For the definition of vector- or matrix-valued differential forms, see e.g.\ the textbook of Tu \cite{MR2723362}*{Section 21}. The standard conjugation-invariant symplectic form on $\matnn \times \matnn$ according to Wilson \cite{MR1626461}*{p.~9} is given as
\begin{equation}
\widetilde{\omega} = \tr (dX \wedge dY) = \sum_{j, k=1}^n dX_{jk} \wedge dY_{kj}
\end{equation}
The form $\widetilde{\omega}$ is invariant under conjugation. Following Etingof \cite{MR2296754}, the action of $\gl_n(\CC)$ admits a moment map 
\begin{align*}
	\mu \colon \matnn \times \matnn \to \mathfrak{gl}_n (\mathbb{C}), \; (X,Y) \mapsto [X, Y]
\end{align*}
where we identify the Lie algebra with its dual using the trace form $\left< M, N\right> =  \tr(MN)$. This moment map was first given in \cite{MR1626461}, following the construction with a unitary group action in Kazhdan, Kostant and Sternberg \cite{MR478225}.

Let $O_{\xi}$ be the coadjoint orbit of the matrix 
$$\xi = \mathrm{diag} (-1, -1, \dots, -1, n-1)$$ 
Since the coadjoint action of $\gl_n(\CC)$ on $\mathfrak{gl}^*_c(\CC)$ is given by conjugation, $O_{\xi}$ consists of traceless matrices $T$ such that $T+\mathrm{id}$ is of rank one.
Then $\widetilde{\mathcal{C}_n}= \mu^{-1} (O_{\xi})$ is the preimage of this orbit, upon which $\gl_n(\CC)$ acts freely. The symplectic reduction along this orbit 
\begin{align*}
	\pi \colon \mu^{-1} (O_{\xi}) \to  \mu^{-1} (O_{\xi}) / \mathrm{GL}_n (\mathbb{C}) = \mathcal{C}_n
\end{align*}
gives the symplectic form $\omega$ on $ \mathcal{C}_n$ which satisfies $ \pi^* \omega = i^* \widetilde{\omega} $, where $i \colon \mu^{-1} (O_{\xi}) \hookrightarrow \matnn \times \matnn$ denotes the inclusion.

\bigskip

The Calogero--Moser space $\camo_n$ is diffeomorphic to the Hilbert scheme of $n$ points in the affine plane, $\mathrm{Hilb}_n(\CC^2)$. This can be shown using the existence of a hyperkähler structure on $\camo_n$, see Wilson \cite{MR1626461}*{Section 8}. For $n>1$ the Hilbert scheme is not affine or Stein, hence this diffeomorphism cannot be an isomorphism in the algebraic or holomorphic category. The topology of the Hilbert scheme of a plane is well-known; The Borel--Moore homology of $\mathrm{Hilb}_n(\CC^2)$ has been calculated by Ellingsrud and Str{\o}mme \cite{MR870732}*{Theorem 1.1, (iii)}. In particular, they obtain that the odd homology vanishes. This homology had been introduced by Borel and Moore \cite{MR131271} to obtain Poincar{\'e} duality for singular cohomology on non-compact manifolds. Since the real dimension of a complex manifold is even, it follows that all odd cohomology vanishes. This implies by the universal coefficient theorem that $H^1(\mathrm{Hilb}_n(\CC^2), \CC) = 0$ which is a topological invariant, hence $H^1(\camo_n, \CC) = 0$. Since $\camo_n$ is an affine manifold, this implies that both the algebraic and the holomorphic first de Rham cohomology group is trivial, since they can be computed using resolutions of the sheaf of locally constant complex-valued functions. We conclude the following:
\begin{lemma}
\label{lem-camotopo}
We have that $H^1(\camo_n, \CC) = 0$. Hence all symplectic holomorphic vector fields on $\camo_n$ are in fact Hamiltonian holomorphic vector fields.
\end{lemma}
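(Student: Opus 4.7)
The plan has two halves: first establish the topological statement $H^1(\camo_n,\CC) = 0$, and then translate it into the equivalence between symplectic and Hamiltonian holomorphic vector fields via the holomorphic de Rham theorem on Stein manifolds.

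For the topological half, I would assemble a short chain of classical results. Wilson's hyperkähler construction \cite{MR1626461}*{Section 8} provides a diffeomorphism $\camo_n \cong \mathrm{Hilb}_n(\CC^2)$; this is not a biholomorphism (the Hilbert scheme is not Stein for $n>1$), but diffeomorphism is enough to transport singular cohomology. Ellingsrud and Strømme's cell-decomposition argument \cite{MR870732} computes the Borel--Moore homology of $\mathrm{Hilb}_n(\CC^2)$ and shows in particular that its odd part vanishes. Since $\mathrm{Hilb}_n(\CC^2)$ is a complex manifold (hence canonically oriented and of even real dimension), Poincaré--Borel--Moore duality converts this into the vanishing of all odd singular cohomology. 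The universal coefficient theorem then gives $H^1(\mathrm{Hilb}_n(\CC^2),\CC) = 0$, and the diffeomorphism transports this to $H^1(\camo_n,\CC) = 0$.

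For the second half, I would pass from singular to holomorphic de Rham cohomology. Because $\camo_n$ is a smooth affine variety, it is Stein, so the holomorphic de Rham complex is an acyclic resolution of the constant sheaf $\underline{\CC}$ by Cartan's Theorem B applied to the coherent sheaves $\Omega^p_{\camo_n}$. Consequently the holomorphic de Rham cohomology of $\camo_n$ agrees with its singular cohomology with $\CC$-coefficients, and in particular every closed holomorphic $1$-form on $\camo_n$ is globally exact. Now let $V$ be a symplectic holomorphic vector field. Cartan's magic formula together with $d\omega = 0$ yields $0 = \mathcal{L}_V \omega = d(i_V \omega)$, so $i_V \omega$ is a closed holomorphic $1$-form, hence exact: $i_V \omega = dH$ for some $H \in \holo(\camo_n)$. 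This is precisely the Hamiltonian condition.

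The only step requiring any genuine input beyond bookkeeping is the first one, namely invoking the Ellingsrud--Strømme calculation and verifying that Wilson's real-analytic identification is indeed a diffeomorphism (this is what the hyperkähler rotation provides). Everything downstream—Poincaré--Borel--Moore duality, the universal coefficient theorem, the Stein de Rham theorem, and Cartan's formula—is standard and assembles into the conclusion without further obstacle.
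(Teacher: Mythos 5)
Your proposal is correct and follows essentially the same route as the paper: the diffeomorphism $\camo_n \cong \mathrm{Hilb}_n(\CC^2)$ via Wilson's hyperkähler structure, the Ellingsrud--Str{\o}mme computation of Borel--Moore homology combined with Poincar\'e duality and the universal coefficient theorem to get $H^1(\camo_n,\CC)=0$, and then the identification of holomorphic de Rham with singular cohomology on the Stein (affine) manifold $\camo_n$ to conclude that the closed form $i_V\omega$ is exact. No gaps; the argument matches the paper's proof step for step.
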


The last three sections will contain the main part of the proof for the Hamiltonian holomorphic density property. Instead of dealing with the vector fields directly, we will consider the corresponding Hamilton functions. The following basic remark is crucial for our calculations. It is a consequence of the construction of the Calogero--Moser space by symplectic reduction.

\begin{remark}
Let $f, h$ be two Hamiltonian functions on the Calogero--Moser space $\cont_n$ and let $F$ and $H$ be $\gl_n(\CC)$-invariant extensions of $f \circ \pi$ and $h \circ \pi$, respectively. Then 
\begin{align*}
	\{ F, H \}_{\widetilde{\omega}} \circ i = \{ f, h \}_{\omega} \circ \pi
\end{align*}
and their corresponding vector fields and flows are related in a similar manner. In this way the symplectic reduction relates the Poisson structure on $\mathcal{C}_n$ to the Poisson structure on $\matnn \times \matnn$, see Marsden and Ratiu \cite{MR836071} for an explanation in the context of Poisson reduction. In other words, to obtain Poisson brackets between Hamiltonian functions on $\mathcal{C}_n$, it suffices to compute brackets of the corresponding invariant Hamiltonian functions on $\matnn \times \matnn$.
\end{remark}

Given two Hamiltonian functions $F$ and $H$ on $\matnn \times \matnn$, their Poisson bracket associated with the symplectic form $\widetilde{\omega} = \sum_{j,k} dX_{jk} \wedge dY_{kj}$ is
\begin{align*}
    \{ F, H \} 
    =  \sum_{j,k = 1}^n \frac{\partial F}{\partial X_{jk}} \frac{\partial H}{\partial Y_{kj}} - \frac{\partial F}{\partial Y_{jk}} \frac{\partial H}{\partial X_{kj}}  
\end{align*}
We recall that the Poisson bracket is antisymmetric and satisfies the Leibniz rule, namely
\begin{align*}
    \{ F, H \} = - \{ H, F \}, \quad  \{F_1 F_2, H \} = F_1 \{F_2, H \} + F_2 \{F_1, H \}
\end{align*}
By the Leibniz rule
\begin{align*}
    \{ F^j, H^k \} = j k F^{j-1} H^{k-1} \{F, H \}, \quad j, k \ge 1. 
\end{align*}

\begin{lemma} \label{lemma:completeVFs}
    Hamiltonian functions of the form $H(X)$ or $H(Y)$, depending either only on $X$ or only on $Y$, and $H = \tr XY$ induce complete vector fields.
\end{lemma}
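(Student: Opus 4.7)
My plan is to work on $\matnn \times \matnn$ with the standard symplectic form $\widetilde{\omega}$, where the Hamilton equations are completely explicit, solve the ODEs there to get globally defined holomorphic flows, check that they preserve $\widetilde{\camo_n}$, and then push the resulting $\gl_n(\CC)$-equivariant flows down to $\camo_n$ via the symplectic reduction identified in the remark preceding the lemma.

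For $H = H(X)$ I lift $H$ to a $\gl_n(\CC)$-invariant function on $\matnn \times \matnn$ and read off from the Poisson formula that $V_H(X_{ab}) = 0$ and $V_H(Y_{ab}) = -\partial H/\partial X_{ba}$. Thus $X$ is constant along the flow and the equation for $Y$ is affine with coefficient depending only on the frozen $X$, yielding the globally defined flow $(X,Y) \mapsto (X, Y - t M(X))$ where $M(X) := (\partial H/\partial X_{ba})_{a,b}$. The case $H = H(Y)$ is symmetric.

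To descend, the flow must stay inside $\widetilde{\camo_n}$. Every $\gl_n(\CC)$-invariant holomorphic function of $X$ is a holomorphic function of the power sums $\tr X, \tr X^2, \dots, \tr X^n$, so by the chain rule $M(X)$ is a polynomial in $X$, hence $[X, M(X)] = 0$ and $[X, Y(t)] = [X, Y(0)]$ for every $t \in \CC$. Combined with the $\gl_n(\CC)$-equivariance of $V_H$, this lets the flow push down to a complete holomorphic flow on $\camo_n$, whose generator is the Hamiltonian vector field of $H$ by the preceding remark. For $H = \tr XY$ the Poisson formula gives directly $V_H(X_{ab}) = X_{ab}$ and $V_H(Y_{ab}) = -Y_{ab}$, so the flow on $\matnn \times \matnn$ is the one-parameter scaling $(X,Y) \mapsto (e^t X, e^{-t} Y)$, which is complete, $\gl_n(\CC)$-equivariant, and preserves $[X, Y]$; descent is then immediate.

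I do not foresee a serious obstacle. The only small subtlety is the identification of $\gl_n(\CC)$-invariant holomorphic functions of $X$ with holomorphic functions of the trace invariants, which is what forces $M(X)$ to be a polynomial in $X$ and hence guarantees preservation of the Calogero--Moser rank-one condition along the flow.
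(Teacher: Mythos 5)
Your proposal is correct and follows essentially the same approach as the paper: you write down the Hamiltonian vector field for $\widetilde{\omega}$ on $\matnn \times \matnn$, observe that for $H(X)$ (resp.\ $H(Y)$) the coefficients are constant along trajectories so the flow $(X,Y)\mapsto(X,Y-tM(X))$ is defined for all $t\in\CC$, and identify the complete scaling flow $(e^tX,e^{-t}Y)$ for $H=\tr XY$. The additional descent to $\camo_n$ (equivariance, $[X,M(X)]=0$ and preservation of the rank-one condition, Poisson reduction) is not part of the paper's proof of this lemma but is exactly the content of the two remarks surrounding it, so it is consistent extra detail rather than a different route.
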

\begin{proof}
    By definition, given a Hamiltonian function $H(X,Y)$, there is a unique vector field $V_H$ satisfying:
    \begin{align*}
        i_{V_H} \widetilde{\omega} = \mathrm{d} H.
    \end{align*}
 The Hamiltonian vector field $V_H$ is 
    \begin{align*}
        V_H = \sum_{j,k} \frac{\partial H}{\partial Y_{jk}} \frac{\partial }{\partial X_{kj}} - \sum_{j,k} \frac{\partial H}{\partial X_{jk}} \frac{\partial }{\partial Y_{kj}} 
    \end{align*}
    Now if $H = H(X)$ only depends on $X$, then the first summand vanishes. We obtain
    \begin{align*}
        V_H = - \sum_{j,k} \frac{\partial H}{\partial X_{jk}} \frac{\partial }{\partial Y_{kj}} 
    \end{align*}
    and the coefficients depend only on $X$, which are constant along any trajectory of a local flow. Hence $V_H$ is complete. For $H= \tr XY$ the associated vector field is
    \begin{align*}
        V_H = \sum_{j,k} X_{kj}\frac{\partial}{\partial X_{kj}} - \sum_{j,k} Y_{kj}\frac{\partial}{\partial Y_{kj}}
    \end{align*}
    which has coefficients linear in each variable and is complete. 
\end{proof}

\begin{example} \label{example: CMHamVectFlow}
We give a few examples of Hamiltonian functions, their corresponding vector fields and flows. In particular, the examples given here are complete, i.e.\ their flows exist for all complex times.

\medskip

\begin{tabular}{c|c|c}
Hamiltonian & vector field & flow map $((X,Y),t) \mapsto$ \\
\hline
$\tr X^j$ & $ - j X^{j-1} \frac{\partial}{\partial Y}$ & $(X, Y - t j X^{j-1})$ \\
$\tr Y^j$ & $j Y^{j-1} \frac{\partial}{\partial X}$ & $(X + t j Y^{j-1}, Y)$ \\
$(\tr X^j)^2$ & $ - 2j (\tr X^j) X^{j-1} \frac{\partial}{\partial Y}$ & $(X, Y - 2 t j (\tr X^j) X^{j-1})$ \\
$(\tr Y^j)^2$ & $2j (\tr Y^j) Y^{j-1} \frac{\partial}{\partial X}$ & $(X + 2 t j (\tr Y^j) Y^{j-1}, Y)$ \\
$\tr X Y$ & $ X \frac{\partial}{\partial X} - Y \frac{\partial}{\partial Y}$ & $(e^t X, e^{-t} Y)$
\end{tabular}

Here, we understand terms like $X^{j-1} \frac{\partial}{\partial Y}$ as entry-wise multiplication of $X^{j-1}$ and $\frac{\partial}{\partial Y}$.
\end{example}

\section{The ring of invariant functions}

Razmyslov \cite{MR0506414} and Procesi \cite{MR419491} proved independently the following about invariant function of tupels of matrices:
\begin{theorem}
Consider the action of $\gl_n(\CC)$ on $\matnn \times \matnn \times \cdots \times \matnn = (\matnn)^m$ by simultaneous conjugation. Then the ring of algebraic invariant functions is generated by
\[
\tr( F_1 \cdot F_2 \cdots F_k )
\]
where each of the $F_1, \dots,  F_k$ is a matrix from one of the $m$ factors and $k \leq n^2 $. Moreover, all relations between the generators are consequences of the Cayley--Hamilton identity.
\end{theorem}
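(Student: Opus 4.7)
The plan is to attack this classical result via polarization combined with Schur--Weyl duality, following the approach of Procesi. First I would reduce to the multilinear case: every polynomial invariant on $(\matnn)^m$ can be recovered by polarization and restitution from its total multilinear polarization on $(\matnn)^N$ for a sufficiently large $N$. It therefore suffices to show that every $\gl_n(\CC)$-invariant multilinear function $f \colon (\matnn)^k \to \CC$ is a $\CC$-linear combination of products of traces of monomials in the matrix arguments, each of length at most $n^2$.

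For the multilinear statement, using the trace form to identify $\matnn \cong \mathrm{End}(V)$ with $V = \CC^n$, a multilinear invariant on $(\matnn)^k$ corresponds to a $\gl_n$-equivariant endomorphism of $V^{\otimes k}$. By Schur--Weyl duality, $\mathrm{End}_{\gl_n}(V^{\otimes k})$ is the image of the group algebra $\CC[S_k]$ acting by place permutation of the tensor factors. A permutation $\sigma \in S_k$ with cycle decomposition $c_1 \cdots c_r$ translates back, through the trace pairing, into the invariant
\begin{align*}
(A_1, \dots, A_k) \;\longmapsto\; \prod_{i=1}^{r} \tr\bigl(A_{j_1^{(i)}} A_{j_2^{(i)}} \cdots A_{j_{\ell_i}^{(i)}}\bigr),
\end{align*}
where $j_1^{(i)}, \dots, j_{\ell_i}^{(i)}$ are the entries of the cycle $c_i$. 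Hence every multilinear invariant is a polynomial in trace monomials, which establishes generation apart from the length bound.

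The bound $k \le n^2$ follows from dimension considerations together with the Cayley--Hamilton identity. Since $\dim_\CC \matnn = n^2$, any $n^2+1$ matrices are linearly dependent. More importantly, the Cayley--Hamilton theorem, rewritten in trace form via Newton's identities, gives a universal formula expressing $M^{n}$ as a polynomial in lower powers of $M$ with coefficients polynomial in $\tr M, \tr M^2, \dots, \tr M^n$. The full multilinearization of this identity expresses any trace monomial of length strictly greater than $n^2$ as a polynomial in trace monomials of strictly smaller length. Iterating this reduction completes the proof of the first assertion.

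The second assertion, that all relations among the generators are consequences of Cayley--Hamilton, is the \emph{Second Fundamental Theorem} of Procesi, and is the main obstacle. The strategy is to identify the kernel of the Schur--Weyl surjection $\CC[S_k] \twoheadrightarrow \mathrm{End}_{\gl_n}(V^{\otimes k})$. For $k > n$ this kernel is generated, as a two-sided ideal, by the total antisymmetrizer on $n+1$ tensor factors; translating back through the trace pairing produces precisely the polarizations of the Cayley--Hamilton identity $p_M(M) = 0$. All polynomial relations among the trace generators then follow by polarization and restitution. The hard part is this last step: one must carefully track the decomposition of $\CC[S_k]$ via Young symmetrizers and verify that the image of the antisymmetrizer on $n+1$ factors, read under the trace pairing, is exactly the multilinearized characteristic polynomial relation.
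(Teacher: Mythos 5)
This statement is not proved in the paper at all: it is quoted as the classical theorem of Razmyslov and Procesi, with citations to \cite{MR0506414} and \cite{MR419491}, and the degree bound $n^2$ is later used in the proof of Proposition \ref{proposition: ReductionInvGenerator} by pointing to Procesi's Theorems 1.3 and 3.4(a) and Razmyslov's final remark. Measured against those sources, the first two steps of your outline (reduction to multilinear invariants by polarization/restitution, and the Schur--Weyl translation of multilinear invariants on $(\matnn)^k$ into the image of $\CC[S_k]$ in $\mathrm{End}_{\gl_n}(V^{\otimes k})$, with cycles corresponding to trace monomials) are the standard and correct route to the first fundamental theorem, and your identification of the kernel of $\CC[S_k]\to\mathrm{End}_{\gl_n}(V^{\otimes k})$ as the two-sided ideal generated by the antisymmetrizer on $n+1$ factors is the right starting point for the second.

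There is, however, a genuine gap in your treatment of the bound $k\le n^2$. The full multilinearization of the Cayley--Hamilton identity is the fundamental trace identity in $n+1$ matrix arguments; it is a relation among \emph{sums over all permutations} of products of traces, so it does not directly rewrite an individual trace monomial of length greater than $n^2$ in terms of shorter ones, and the preliminary remark that any $n^2+1$ matrices are linearly dependent does not repair this, since such dependence relations are not universal in the matrix entries. What a direct Cayley--Hamilton argument yields is Procesi's weaker bound $2^n-1$; the bound $n^2$ in the statement is Razmyslov's, and it comes from a genuinely different and nontrivial input: the generating degree of the trace ring is controlled by the Nagata--Higman nilpotency bound $N(n)$ for nil algebras of index $n$ in characteristic zero, and Razmyslov proved $N(n)\le n^2$. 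Your sketch asserts the reduction rather than proving it, so the quantitative half of the first assertion is unsupported. Likewise, for the second assertion you explicitly leave open the verification that the antisymmetrizer corresponds, under the trace pairing, exactly to the multilinearized Cayley--Hamilton relation and that all relations follow by polarization and restitution; as you yourself note, that is the hard part, so as written the proposal is an outline of the known proof rather than a complete argument.
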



\begin{remark} \label{remark: rankcondition}
    Known as Noether's conservation law for Hamiltonian systems with symmetry, see Abraham and Marsden \cite{MR515141}*{Theorem 4.2.2}, the flow $\varphi_t$ induced by any Hamiltonian vector field $V_H$ associated to an invariant Hamiltonian function $H$ preserves the fibers of the moment map $\mu$. In our case, one can verify directly that the commutator $[X, Y]$ is constant along the flow $\varphi_t$ for a Hamiltonian function $H(X,Y)$, which is a product of traces of monomials in $X, Y$. In particular, the rank condition $\mathrm{rank}([X,Y] + \mathrm{id})=1$ is preserved. 
\end{remark}

So far we have not made use of the rank condition on $\widetilde{\mathcal{C}_n}$. In fact, it turns out that this condition is magical in reducing the generating set of the ring of invariant functions on $\widetilde{\mathcal{C}_n}$ to a much simpler subset. Let $$B = [X,Y],\quad A= [X,Y]+\mathrm{id} = B + \mathrm{id}.$$
On $\widetilde{\mathcal{C}_n}$, $A$ is of rank one. The following identity for the trace of a product of matrices, where one of the matrices has rank one, will be useful for the reduction. 

\begin{lemma}\label{lemma: trMANA}
    Let $M, N, C \in \mathrm{Mat}(n \times n; \mathbb{C})$, and assume that $C$ is of rank one. Then
    $$\tr MCNC = \tr MC \cdot \tr NC.$$ 
\end{lemma}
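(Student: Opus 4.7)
My plan is to reduce the identity to a direct scalar computation by exploiting a rank-one factorization of $C$.

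Since $C$ has rank one, I would first write $C = u v^{T}$ for some column vectors $u, v \in \CC^{n}$ (with $v^{T}$ denoting the row vector transpose). The key observation is then that for any matrix $A \in \matnn$ one has
\[
A C = A u v^{T}, \qquad \tr(A C) = \tr(A u v^{T}) = v^{T} A u,
\]
using the cyclic property of the trace; note that $v^{T} A u \in \CC$ is a scalar.

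Now I would simply multiply out:
\[
M C N C = M u v^{T} N u v^{T} = (v^{T} N u) \, M u v^{T} = (v^{T} N u) \cdot M C,
\]
where I pulled the scalar $v^{T} N u$ out of the product. Taking traces and using $\tr(MC) = v^{T} M u$ gives
\[
\tr(M C N C) = (v^{T} N u)(v^{T} M u) = \tr(N C) \cdot \tr(M C),
\]
which is the claimed identity.

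The only thing requiring a sentence of justification is the existence of the factorization $C = u v^{T}$: a rank-one matrix has a one-dimensional column space spanned by some nonzero vector $u$, so every column of $C$ is a scalar multiple of $u$, and collecting those scalars into a row vector $v^{T}$ yields $C = u v^{T}$. There is no real obstacle here; the content of the lemma is just that when one factor in a product is rank one, the trace splits multiplicatively, which is transparent once the outer product form is used.
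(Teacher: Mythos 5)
Your proof is correct and follows essentially the same route as the paper: both write $C$ as an outer product of two vectors, pull out the resulting scalar from $MCNC$, and identify the two traces. The extra sentence justifying the rank-one factorization is fine but not needed beyond what the paper assumes.
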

\begin{proof}
    Since $C$ is of rank one, write $C = v w^t$ for some $v, w \in \mathbb{C}^n\setminus \{0\}$. Then
    \begin{align*}
        \tr MCNC &= \tr M vw^t Nvw^t = \tr M v(w^t Nv)w^t \\
        &= (w^t Nv) \tr M v w^t = \tr N C \cdot \tr MC.   \qedhere
    \end{align*}
\end{proof}

We also need the following identities concerning the commutator $B$. 

\begin{lemma}\label{lemma: trXYB}
    For $k,l \in \mathbb{N}_0$ 
    $$ \tr X^k B = 0, \tr Y^l B = 0.$$
    And $\tr XYB = \binom{n}{2}$ on $\widetilde{\mathcal{C}_n}$.
\end{lemma}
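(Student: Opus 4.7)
For the first two identities, I would use only the cyclic property of the trace and the definition $B = XY - YX$: expanding gives $\tr X^k B = \tr X^{k+1}Y - \tr X^k Y X$, and cyclicity identifies the second term with $\tr X^{k+1} Y$, so the two cancel. The identity $\tr Y^\ell B = 0$ is obtained in exactly the same way. These do not yet use the rank condition and hold on all of $\matnn \times \matnn$.

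For $\tr XYB = \binom{n}{2}$ the rank condition is essential, and the key is to turn it into an algebraic relation for $B$. Setting $A = B + \mathrm{id}$, the fact that $A$ has rank one means $A = vw^t$ for some $v, w$, and therefore $A^2 = (w^t v) A = (\tr A)\, A$. Since $\tr B = \tr[X,Y] = 0$, we have $\tr A = n$, giving $A^2 = nA$. Expanding $A = B + \mathrm{id}$ and simplifying yields the relation
\begin{equation*}
B^2 = (n-2) B + (n-1)\, \mathrm{id},
\end{equation*}
so taking traces and using $\tr B = 0$ we obtain $\tr B^2 = n(n-1)$.

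It remains to relate $\tr B^2$ to $\tr XYB$. Expanding $B^2 = (XY - YX)^2$ and applying cyclicity of the trace to each of the four summands shows $\tr B^2 = 2\bigl(\tr(XY)^2 - \tr X^2 Y^2\bigr)$. On the other hand, the same cyclic manipulation gives $\tr XYB = \tr (XY)^2 - \tr X^2 Y^2$. Thus $\tr XYB = \tfrac{1}{2} \tr B^2 = \tfrac{n(n-1)}{2} = \binom{n}{2}$.

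I do not anticipate a serious obstacle here; the only non-routine ingredient is the passage from ``$A$ is of rank one with trace $n$'' to the quadratic identity $A^2 = nA$, and this is a standard fact about rank-one matrices (already used implicitly in Lemma \ref{lemma: trMANA}, which one could alternatively quote with $M = N = \mathrm{id}$ to get $\tr A^2 = (\tr A)^2 = n^2$, giving an alternative route to $\tr B^2 = n(n-1)$ via $B^2 = A^2 - 2A + \mathrm{id}$). Everything else is bookkeeping with the cyclic property of the trace.
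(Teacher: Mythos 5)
Your proposal is correct and follows essentially the same route as the paper: cyclicity of the trace for $\tr X^k B = \tr Y^l B = 0$, the rank-one identity $A^2 = (\tr A)A = nA$ to get $\tr B^2 = n(n-1)$, and then a trace manipulation identifying $\tr XYB$ with $\tfrac{1}{2}\tr B^2$. The only cosmetic difference is in the last step, where the paper uses $\tr XYB = -\tr YXB = -\tr XYB + \tr B^2$ instead of your explicit expansion of $(XY-YX)^2$; the two computations are equivalent.
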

\begin{proof}
    
    \begin{align*}
        \tr X^k B = \tr X^k (XY-YX) = \tr X^{k+1}Y - \tr X^k Y X = 0.
    \end{align*}
    Similarly $\tr Y^l B = 0$. For $\tr XYB$, because $A$ is of rank one on $\widetilde{\mathcal{C}_n}$, we can find $v, w \in \mathbb{C}^n\setminus \{0\}$ such that $A = v w^t$. Then with $\tr A = \tr ([X,Y]+\mathrm{id}) = \tr \mathrm{id}=n$,
    \begin{align*}
        A^2 = v w^t v w^t = v (w^t v) w^t = (\tr A) A = nA.     
    \end{align*}
    This implies that 
    \begin{align*}
        \tr B^2 &= \tr A^2 - 2 \tr A + \tr \mathrm{id} \\
        &= n \tr A - 2 \tr A + n \\
        &= n(n-1).
    \end{align*}
    Then from
    $$ \tr XYB = - \tr YXB = - \tr XYB + \tr B^2 $$
    it follows that $\tr XYB = \binom{n}{2}$ on $\widetilde{\mathcal{C}_n}$.
\end{proof}

\begin{definition}
    The \emph{(total) degree} $\deg M$ of a monomial $M(X,Y)$ in $X,Y$ is defined to be the total number of the factors $X$ and $Y$ in $M(X,Y)$. The \emph{degree} $\deg H$ of a trace $H(X,Y) = \tr M(X,Y)$ is the degree $\deg M$ of $M$.  For product of traces, the degree is the sum of degrees of the factors. By the \emph{double degree}, we mean the pair of degrees in $X$ and in $Y$. 
\end{definition}

\begin{lemma} \label{lemma: degtrMNB}
    Let $M = M(X,Y), N= N(X,Y)$ be monomials in $X$ and $Y$. Then on $\widetilde{\mathcal{C}_n}$, $\tr M N B$ can be written as a sum of products of traces of monomials in $X,Y$, where the degree of each summand is bounded by $\deg M + \deg N - 2$.
\end{lemma}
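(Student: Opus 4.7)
The plan is to proceed by strong induction on $d = \deg M + \deg N$. Base cases ($d \le 2$) follow immediately from Lemma~\ref{lemma: trXYB}: pure-power monomials $X^k, Y^l$ give $\tr MNB = 0$, while $\tr XYB = \binom{n}{2}$ is a constant satisfying the bound $d - 2 = 0$.

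For the inductive step $d \ge 3$, if $MN$ is a pure power of $X$ or $Y$ then $\tr MNB = 0$ by Lemma~\ref{lemma: trXYB}. Otherwise, $MN$ contains both variables, so by cyclic invariance of the trace we may factor $MN = P_1 \cdot XY \cdot P_2$ with $\deg P_1 + \deg P_2 = d - 2$. Using $XY = YX + B$,
\[
\tr MNB \;=\; \tr P_1 YX P_2 B \;+\; \tr P_1 B P_2 B.
\]
For the second summand I would substitute $B = A - I$ and apply Lemma~\ref{lemma: trMANA} to factorize $\tr P_1 A P_2 A = \tr P_1 A \cdot \tr P_2 A$. A direct expansion combined with $\tr P A = \tr P B + \tr P$, cyclicity, and the inductive hypothesis on the arising $\tr P' B$-terms (with $\deg P' < d$) shows that $\tr P_1 B P_2 B$ becomes a sum of products of traces of total degree $\le d - 2$; the crucial point is that $\deg P_1 + \deg P_2 = d - 2$, so the "bad" terms like $\tr P_1 \cdot \tr P_2$ and $\tr P_1 P_2$ sit at degree exactly $d - 2$, which is within bound.

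The first summand $\tr P_1 YX P_2 B$ is again of the form $\tr P' B$ with $\deg P' = d$, but its cyclic word has one more $YX$ inversion than $MN$. Iterating the swap $XY \leftrightarrow YX$ at every remaining $XY$-substring, each step introducing a further already-reducible $\tr (\cdot) B (\cdot) B$ correction, after at most $ab$ swaps (where $a, b$ are the numbers of $X$'s and $Y$'s in $MN$) we terminate at the canonical form $\tr Y^b X^a B$. It thus remains to reduce this canonical trace.

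For the canonical case I would combine two identities. First, applying Lemma~\ref{lemma: trMANA} with $M' = Y^b, N' = X^a$, and using $\tr Y^b B = \tr X^a B = 0$ (Lemma~\ref{lemma: trXYB}) to get $\tr Y^b A = \tr Y^b$ and $\tr X^a A = \tr X^a$, the expansion of $\tr Y^b A X^a A$ yields
\[
\tr Y^b \cdot \tr X^a \;=\; \tr Y^b B X^a B \;+\; \tr X^a Y^b B \;+\; \tr Y^b X^a B \;+\; \tr Y^b X^a
\]
(using cyclicity $\tr Y^b B X^a = \tr X^a Y^b B$). Second, the commutator expansion $[X^a, Y^b] = \sum_{i, j} Y^j X^i B X^{a-1-i} Y^{b-1-j}$ produces
\[
\tr X^a Y^b B - \tr Y^b X^a B \;=\; \sum_{i,j} \tr Y^j X^i B X^{a-1-i} Y^{b-1-j} B,
\]
whose summands each have their two factor-monomials of total degree $d - 2$, hence are reducible to degree $\le d - 2$ by Lemma~\ref{lemma: trMANA} and the inductive hypothesis. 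Combining these two identities, together with the Lemma~\ref{lemma: trMANA} reduction of $\tr Y^b B X^a B$ itself, one can solve for both $\tr X^a Y^b B$ and $\tr Y^b X^a B$ individually: the degree-$d$ contributions $\tr Y^b \cdot \tr X^a$ and $\tr Y^b X^a$ conspire to cancel, leaving only terms of total degree $\le d - 2$. The main obstacle will be verifying this cancellation explicitly: individual applications of Lemma~\ref{lemma: trMANA} produce degree-$d$ terms which vanish only in combination, requiring careful bookkeeping of the contributions from each substitution, but the cancellation is a priori guaranteed by the rank-one relation $A^2 = n A$ on $\widetilde{\mathcal{C}_n}$.
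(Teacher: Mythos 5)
Your setup — induction on $d=\deg M+\deg N$, the base cases from Lemma \ref{lemma: trXYB}, the swap $XY\leftrightarrow YX$ producing corrections $\tr M'BN'B$ with $\deg M'+\deg N'=d-2$, and their reduction via $B=A-\mathrm{id}$ and Lemma \ref{lemma: trMANA} plus the induction hypothesis — coincides with the paper's proof. The gap is in your treatment of the canonical trace $\tr X^aY^bB$ (equivalently $\tr Y^bX^aB$). Your first identity, obtained from $\tr Y^bAX^aA=\tr Y^bA\cdot\tr X^aA$, \emph{is} the Lemma \ref{lemma: trMANA} expansion of $\tr Y^bBX^aB$, merely rearranged; so "combining it with the Lemma \ref{lemma: trMANA} reduction of $\tr Y^bBX^aB$ itself" collapses to the tautology $0=0$ rather than producing a formula for $\tr X^aY^bB$. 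Concretely, work modulo terms of degree $\le d-2$ and set $u=\tr X^aY^bB$, $v=\tr Y^bX^aB$, $w=\tr Y^bBX^aB$. Your first identity gives $u+v+w\equiv\tr X^a\tr Y^b-\tr X^aY^b$, and your commutator identity gives $u\equiv v$ (which is anyway already implied by the swap procedure, so it carries no new information). That is two relations for three unknowns: nothing forces $u\equiv 0$. Indeed, once the lemma is known, it is $w$ — not $u+v$ — that absorbs the degree-$d$ products, $w\equiv\tr X^a\tr Y^b-\tr X^aY^b$, so the cancellation you hope for does not occur. Trying instead to reduce $w$ by the $\tr MBNB$ computation is circular, since that computation needs the induction hypothesis for $\tr X^aY^bB$ and $\tr Y^bX^aB$, which sit at the current degree $d$.

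The missing ingredient is the one the paper uses at exactly this point: invariance of $B=[X,Y]$ under the Calogero--Moser flow $(X,Y)\mapsto(X+tY^j,Y)$ (Remark \ref{remark: rankcondition}). Applying this flow to the identity $\tr X^{i+1}B=0$ and extracting the coefficient of $t$ yields
\begin{equation*}
0=\tr X^iY^jB+\tr X^{i-1}Y^jXB+\dots+\tr Y^jX^iB,
\end{equation*}
and each cyclic variant $\tr X^{i-m}Y^jX^mB$ equals $\tr X^iY^jB$ up to terms of degree $\le i+j-2$ by your swap mechanism; hence $(i+1)\tr X^iY^jB$ is of degree $\le i+j-2$, closing the induction. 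Some relation of this kind, going beyond Lemma \ref{lemma: trMANA} and the commutator expansion, is genuinely needed, and your proposal as written does not supply it.
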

\begin{proof}
    Use induction on $k = \deg M + \deg N$. For $k=1,2$, the induction base is given by Lemma \ref{lemma: trXYB}
    \begin{align*}
        & \tr X B = 0, \, \tr YB = 0 \\
        & \tr XY B = \binom{n}{2}, \, \tr X^2 B = 0, \, \tr Y^2 B = 0. 
    \end{align*}
    For the induction step, first consider $\tr MBNB$. By the identities $B = A - \mathrm{id}$ and $\tr MANA = \tr MA \tr NA$ in Lemma $\ref{lemma: trMANA}$,
    \begin{align*}
        \tr MBNB =& \tr M A N A - \tr MNA - \tr MAN + \tr MN \\
        =& \tr MA \cdot \tr NA - \tr MNB - \tr MBN - \tr MN \\
        =& \tr MB \cdot \tr NB + \tr M \cdot \tr NB + \tr MB \cdot \tr N \\ 
        &+ \tr M \cdot \tr N - \tr MNB - \tr NMB - \tr MN.
    \end{align*}
    By the induction hypothesis, each summand above involves terms of degree bounded by $ \deg M + \deg N$. Hence $\tr MBNB$ is a sum of products of traces, where each summand has degree smaller than or equal to  $\deg M + \deg N$. 
    
    Notice that for $YX$ in $MN = M'YXN'$
    \begin{align} \label{equation: YXswapXY}
        \tr M'YXN'B = \tr M'XYN' B - \tr M'BN'B.
    \end{align}
    Thus the pair $YX$ can be replaced with $XY$, up to an additional term of degree smaller than or equal to $\deg M' +\deg N' = \deg M+\deg N -2$. Continuing swapping $YX$ until we obtain $\tr X^i Y^j B$ for some $i,j \in \mathbb{N},\, i+j = \deg M +\deg N$. For this we take the identity $\tr X^{i+1} B = 0$, which under the Calogero--Moser flow $(X, Y) \mapsto (X + t Y^j, Y)$ becomes 
    $$\tr (X+tY^j)^{i+1} B = 0$$
     since $ B= [X,Y]$ is invariant under the flow by Remark \ref{remark: rankcondition}. The coefficient of $t$ in this new identity must vanish, which gives
    \begin{align} \label{equation: flowXiYj}
        0  = \tr X^i Y^j B + \tr X^{i-1}Y^j X B  + \dots + \tr X Y^j X^{i-1} B + \tr Y^j X^i B
    \end{align}
     Continue to swap $YX$ to $XY$ for $\tr X^{i-m}Y^j X^m B, m = 1, \dots, i$, which yields extra summands of degree up to $i+j -2$ as in Equation \eqref{equation: YXswapXY}. Then 
    \begin{align*}
        \tr X^{i-m}Y^j X^m B = \tr X^i Y^j B + \text{ terms of degree up to } i+j-2
    \end{align*}
    This and Equation \eqref{equation: flowXiYj} imply that 
    \begin{align*}
        0 = (i+1) \tr X^i Y^j B + \text{ terms of degree up to } i+j-2 
    \end{align*}
    hence $\tr X^i Y^j B$ consists of terms of degree bounded by $i+j-2$. This in turn shows that $\tr MNB$ is a sum of products of traces, with each summand of degree smaller than or equal to $ \deg M + \deg N -2$.
\end{proof}

\begin{corollary}\label{corollary: MagicOnCM}
    Let $M = M(X,Y), N= N(X,Y)$ be monomials in $X, Y$. Denote the number of $X$ and $Y$ in $M$ by $i_1$ and $j_1$, respectively. Similary $i_2$ and $j_2$ for $N$. Then the trace $\tr MN$ is equal to $\tr X^i Y^j, \, i=i_1+i_2, j = j_1+j_2$, on $\widetilde{\mathcal{C}_n}$ up to terms of degree smaller than or equal to $i+j-4$. 
\end{corollary}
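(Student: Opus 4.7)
The plan is to reduce $\tr MN$ to $\tr X^i Y^j$ by iteratively commuting adjacent $Y X$-pairs inside the word $MN$ using the identity $YX = XY - B$, and to bound the accumulated error using Lemma~\ref{lemma: degtrMNB}. Concretely, pick any factorization $MN = W_1 \cdot YX \cdot W_2$ (if no such factorization exists even after cyclic rotation, then $MN$ is already of the form $X^i Y^j$ up to cyclic permutation, and we are done by the trace cyclicity). The identity gives
\[
\tr W_1 YX W_2 \;=\; \tr W_1 XY W_2 \;-\; \tr W_1 B W_2.
\]
The first term on the right has the same total $X,Y$-degree $i+j$ but strictly fewer $YX$-inversions (counted cyclically). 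The second, correction term has only $\deg W_1 + \deg W_2 = i + j - 2$ factors in $X, Y$ alongside the single $B$; after cyclically rotating so that $B$ sits at the end, $\tr W_1 B W_2 = \tr(W_2 W_1) B$, and Lemma~\ref{lemma: degtrMNB} expresses this as a sum of products of traces of monomials in $X, Y$ of degree at most $(i+j-2) - 2 = i + j - 4$.

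The iteration proceeds by downward induction on the number of cyclic $YX$-inversions of the current word. Each swap decreases this number by one, produces an ``error'' contribution bounded in degree by $i + j - 4$, and leaves the leading term in the form $\tr W$ with the same degree and content. After finitely many swaps, the leading term stabilizes to a cyclic word containing no $YX$-adjacency, which (cyclically) is $X^i Y^j$. Collecting all correction terms, each of which is a product of traces of monomials in $X, Y$ of total degree at most $i + j - 4$, yields the desired conclusion.

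There is no real obstacle here: the entire depth of the argument is concentrated in Lemma~\ref{lemma: degtrMNB}, which is what guarantees the critical ``double drop'' in degree — replacing a $YX$ by the commutator $B$ already loses two factors, and re-expanding a factor of $B$ via the rank-one identity and the Calogero--Moser flow trick loses two more. Once that lemma is in hand, the corollary is a straightforward bookkeeping argument about reducing words to normal form $X^i Y^j$ modulo lower-degree terms.
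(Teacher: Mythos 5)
Your proposal is correct and takes essentially the same route as the paper's proof: replace each $YX$ by $XY - B$, cycle the $B$-term to the form $\tr N'M'B$ so that Lemma~\ref{lemma: degtrMNB} bounds it in degree by $i+j-4$, and iterate until the leading word is $X^i Y^j$. One small caution: measure progress by inversions of the \emph{linear} word (number of $Y$'s standing before an $X$) rather than cyclically, since the cyclic word $X^iY^j$ still has a wrap-around $YX$ adjacency and the cyclic count would not terminate the iteration; the linear count strictly drops with each swap, which is exactly the paper's ``moving all $X$ to the left.''
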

\begin{proof}
    For one $YX$ in $MN=M'YXN'$, we look at
    \begin{align}
        \tr M'YXN' = \tr M'XYN' - \tr M'BN' = \tr M'XYN' - \tr N'M'B 
    \end{align}
    By Lemma \ref{lemma: degtrMNB}, $\tr N'M'B$ consists of terms of degree $\le i+j-4$. Moving all $X$ to the left by swapping $YX$ to $XY$, we see that the claim is true.
\end{proof}

In this manner, any trace of monomials in $X,Y$ on $\widetilde{\mathcal{C}_n}$ of double degree $(i,j)$, can be written as $\tr X^i Y^j$ plus a polynomial in trace of monomials in $X,Y$, whose degree is bounded by $ i+j-4$. 

\begin{proposition} \label{proposition: ReductionInvGenerator}
    The invariant functions on $\mathrm{Mat}(n \times n ; \mathbb{C}) \times \mathrm{Mat}(n \times n ; \mathbb{C})$, restricted to $\widetilde{\mathcal{C}_n}$, can be generated by $ \mathcal{B}_n  = \{  \tr X^i Y^j : i,j \in \mathbb{N}_0, i + j \leq n^2 \}$.
\end{proposition}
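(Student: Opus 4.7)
The plan is to combine the Procesi--Razmyslov theorem with the degree-reduction machinery already established in Corollary \ref{corollary: MagicOnCM}, and then run an induction on the total degree. By Procesi--Razmyslov, applied to the case $m=2$, every invariant function on $\matnn \times \matnn$ is a polynomial in the traces $\tr(F_1 F_2 \cdots F_k)$ where each $F_j \in \{X,Y\}$ and $k \leq n^2$. Hence it suffices to prove that the restriction of each such generator to $\widetilde{\camo_n}$ lies in the subring generated by $\mathcal{B}_n$.

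First I would fix a generator $H = \tr M(X,Y)$ with $M$ a monomial in $X,Y$ of double degree $(i,j)$, $i+j \leq n^2$. By Corollary \ref{corollary: MagicOnCM}, on $\widetilde{\camo_n}$ we have
\[
H = \tr X^i Y^j + R,
\]
where $R$ is a polynomial in traces of monomials in $X,Y$, each summand having total degree at most $i+j-4$. The leading term $\tr X^i Y^j$ is visibly an element of $\mathcal{B}_n$, since $i+j \leq n^2$. Every monomial trace appearing in $R$ has total degree strictly less than $i+j$, hence in particular $\leq n^2$, so the induction hypothesis applies to it.

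The induction is therefore on the total degree $d = i+j$: for $d \leq 0$ the only generator is the constant $\tr \id = n$, which lies in the subring generated by $\mathcal{B}_n$ (and in any case corresponds to $\tr X^0 Y^0$); for the induction step, the decomposition above expresses $H$ as $\tr X^i Y^j$ plus a polynomial in strictly lower-degree trace monomials, each of which lies in the ring generated by $\mathcal{B}_n$ by hypothesis. Since any invariant function on $\widetilde{\camo_n}$ is a polynomial in the Procesi--Razmyslov generators, and each of those restricts to a polynomial in elements of $\mathcal{B}_n$, the proposition follows.

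The only point that requires a small amount of care is the preservation of the degree bound $i+j \leq n^2$ throughout the recursion. This is automatic because Corollary \ref{corollary: MagicOnCM} strictly decreases degree in the error terms (by at least $4$) while the ``leading'' term $\tr X^i Y^j$ has exactly the same double degree $(i,j)$ as the original monomial; so starting from a Procesi--Razmyslov generator of length $\leq n^2$, every term produced along the way has total degree $\leq n^2$ and hence falls within $\mathcal{B}_n$. I do not foresee a genuine obstacle here — the whole difficulty has already been absorbed into Lemmas \ref{lemma: trMANA}, \ref{lemma: trXYB}, \ref{lemma: degtrMNB} and Corollary \ref{corollary: MagicOnCM}, where the rank-one condition on $A = [X,Y] + \id$ is used via the identity $\tr MANA = \tr MA \cdot \tr NA$. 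The proposition itself is then a short bookkeeping argument on top of that material.
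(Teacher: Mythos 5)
Your proposal is correct and follows essentially the same route as the paper: reduce to trace monomials of degree at most $n^2$ via Procesi--Razmyslov, then induct on total degree using Corollary \ref{corollary: MagicOnCM} to replace each trace by $\tr X^i Y^j$ plus lower-degree terms handled by the induction hypothesis. Your remark on the preservation of the bound $i+j \leq n^2$ is a correct (and slightly more explicit) version of the bookkeeping the paper leaves implicit.
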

\begin{proof}
    By the invariant theory of $n \times n$ matrices, see \cite{MR419491}*{Theorem 1.3 and Theorem 3.4 (a)} and \cite{MR0506414}*{final remark}, the ring of invariant functions is generated by traces of monomials in $X$ and $Y$ with total degree less than or equal to $n^2 $. We show that any such trace of degree $D$ is contained in the ideal generated by $ \mathcal{B}_n$. For $D \le 2$, we only have $$ f \equiv \text{constant},\tr X, \tr Y, \tr X^2, \tr Y^2, \tr XY. $$ Assume that traces of degree $\le D$ are generated by $\mathcal{B}_n$. If we consider a trace function $\tr MN$ of degree $D+1$ as in Corollary \ref{corollary: MagicOnCM}, we can replace it with $\tr X^i Y^j$ plus terms of degree smaller than or equal to $D-3$, which are generated by $\mathcal{B}_n$ according to the induction hypothesis.
\end{proof}

The result of Proposition \ref{proposition: ReductionInvGenerator} above was also obtained by Etingof and Ginzburg \cite{MR1881922}*{Section 11, p.~322, Remark (ii)} where they make use of the Harish-Chandra homomorphism.

\section{Proof of the Hamiltonian Density Property for $n=2$}
We will show that the Lie algebra generated by a set of invariant functions on $\matnn \times \matnn$, whose corresponding vector fields are complete, contains all monomials in the generators of the ring of invariant functions. To this end, we take first the following set of invariant functions on $\mathrm{Mat}(n \times n; \mathbb{C}) \times \mathrm{Mat}(n \times n; \mathbb{C})$
\begin{align*}
    \mathcal{D}  = \{ \tr X^i, \tr Y^j, (\tr X^i)^2, (\tr Y^j)^2 \colon i,j \in \mathbb{N}\}
\end{align*}
Their corresponding vector fields and flows are given in Example \ref{example: CMHamVectFlow}. 

We need the following formula for our bracket computation. 
\begin{lemma}\label{lemma: commutator-general}
    For $a, b, c, d \in \mathbb{N}_0$, 
    \[\begin{split}
        &\{ \tr X^a Y^b, \tr X^c Y^d \} \\=& \sum_{\substack{1\le p \le a \\ 1 \le q \le d}} \tr X^{p-1} Y^{d-q}X^c Y^{q-1}X^{a-p}Y^b \\ &- \sum_{\substack{1\le r \le b \\ 1 \le s \le c}} \tr Y^{r-1} X^{c-s} Y^d X^{s-1} Y^{b-r} X^a.
    \end{split}\]
\end{lemma}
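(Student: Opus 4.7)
The plan is a direct computation using the coordinate formula
\[
\{ F, H \} = \sum_{j,k} \frac{\partial F}{\partial X_{jk}} \frac{\partial H}{\partial Y_{kj}} - \frac{\partial F}{\partial Y_{jk}} \frac{\partial H}{\partial X_{kj}}
\]
stated earlier in the excerpt, combined with the standard identity for differentiating a trace of a product. The key preliminary computation is that for monomials of the form $\tr X^a Y^b$, one has
\[
\frac{\partial \tr X^a Y^b}{\partial X_{jk}} = \sum_{p=1}^{a} \bigl(X^{a-p} Y^b X^{p-1}\bigr)_{kj}, \qquad \frac{\partial \tr X^a Y^b}{\partial Y_{jk}} = \sum_{q=1}^{b} \bigl(Y^{b-q} X^a Y^{q-1}\bigr)_{kj},
\]
as can be seen by expanding the trace in coordinates and observing that each differentiation with respect to $X_{jk}$ (resp.\ $Y_{jk}$) selects one of the $a$ (resp.\ $b$) letter positions. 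I would state this as a one-line sublemma (or simply a remark) before plugging into the Poisson bracket.

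Next, I would substitute these derivative formulas into the Poisson bracket. The first summand in $\{ \tr X^a Y^b, \tr X^c Y^d \}$ contributes
\[
\sum_{j,k} \sum_{p=1}^{a} \sum_{q=1}^{d} \bigl(X^{a-p} Y^b X^{p-1}\bigr)_{kj} \bigl(Y^{d-q} X^c Y^{q-1}\bigr)_{jk},
\]
and the $j,k$ summation collapses to $\tr\bigl(X^{a-p} Y^b X^{p-1} \cdot Y^{d-q} X^c Y^{q-1}\bigr)$ by the identity $\sum_{j,k} M_{kj} N_{jk} = \tr(MN)$. A cyclic rotation then rewrites this as $\tr X^{p-1} Y^{d-q} X^c Y^{q-1} X^{a-p} Y^b$, matching the first sum in the claim. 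The second summand is handled in exactly the same way, using the derivative formulas with the roles of $X$ and $Y$ swapped, and a cyclic rotation produces $\tr Y^{r-1} X^{c-s} Y^d X^{s-1} Y^{b-r} X^a$.

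There is no real obstacle here beyond careful index bookkeeping; the only thing to watch is that the cyclic rotation is chosen so that the exponent pattern on the resulting monomial exactly matches the formula as written (in particular, that $X^{p-1}$ is the leftmost factor in the first sum and $Y^{r-1}$ in the second). Once the derivative formulas are in hand, the rest is mechanical expansion, and the identity follows for all $a, b, c, d \in \NN_0$ (with the convention that an empty sum is zero, which automatically handles the cases where one of $a$, $b$, $c$, $d$ vanishes).
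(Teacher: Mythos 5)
Your proof is correct and follows essentially the same route as the paper: a direct coordinate computation of the Poisson bracket of trace monomials, collapsing the index sums to traces and finishing with a cyclic rotation; the only cosmetic difference is that you package the expansion through the matrix-derivative formulas $\partial \tr(X^aY^b)/\partial X_{jk}$, whereas the paper applies the Leibniz rule entrywise using $\{X_{ij},Y_{kl}\}=\delta_{jk}\delta_{li}$. Your derivative formulas and the final bookkeeping (including the empty-sum convention for vanishing exponents) check out.
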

\begin{proof}
    We make use of the product rule for the Poisson bracket and that $\{X_{ij}, Y_{kl}\} = \delta_{jk}\delta_{li}$
    \begin{align*}
       &\{ \tr X^a Y^b, \tr X^c Y^d \} \\
       =& \sum \{ X_{i_1 i_2}\cdots X_{i_a j_1}Y_{j_1 j_2} \cdots Y_{j_b i_1}, X_{k_1 k_2} \cdots X_{k_c m_1}Y_{m_1 m_2} \cdots Y_{m_d k_1} \} \\
       =& \sum \sum_{p,q} \widehat{X_{i_p i_{p+1}}} \widehat{Y_{m_q m_{q+1}}} \{X_{i_p i_{p+1}}, Y_{m_q m_{q+1}}\}  \\
       & + \sum \sum_{r,s} \widehat{Y_{j_r j_{r+1}}} \widehat{X_{k_s k_{s+1}}} \{Y_{j_r j_{r+1}}, X_{k_s k_{s+1}}\}\\
       =& \sum \sum_{p,q} \widehat{X_{i_p i_{p+1}}} \widehat{Y_{m_q m_{q+1}}} \delta_{i_{p+1} m_q} \delta_{m_{q+1} i_p} \\
       & - \sum \sum_{r,s} \widehat{Y_{j_r j_{r+1}}} \widehat{X_{k_s k_{s+1}}} \delta_{j_{r+1} k_s} \delta_{k_{s+1} j_r} \\
       =& \sum_{\substack{1\le p \le a \\ 1 \le q \le d}} \tr X^{p-1} Y^{d-q}X^c Y^{q-1}X^{a-p}Y^b \\ &- \sum_{\substack{1\le r \le b \\ 1 \le s \le c}} \tr Y^{r-1} X^{c-s} Y^d X^{s-1} Y^{b-r} X^a
    \end{align*}
    where $\widehat{X_{i_p i_{p+1}}}$ stands for the product of all factors $X$ without $X_{i_p i_{p+1}}$, i.e. 
    $$ \widehat{X_{i_p i_{p+1}}} = X_{i_1 i_2}\cdots X_{i_{p-1} i_p} X_{i_{p+1} i_{p+2}} \cdots X_{i_a j_1} X_{k_1 k_2} \cdots X_{k_c m_1} $$
    and we identify $X_{i_a j_1}$ with $X_{i_a i_{a+1}}$, similarly for other terms. 
\end{proof}

Two simple examples:
\begin{align*}
    \{ \tr X, \tr Y \} &=  \{ \sum_k X_{kk},  \sum_j Y_{jj} \}
    = \sum_{k,j} \delta_{kj} \delta_{jk}  = n \\
    \{  \tr X^2 , \tr Y^2 \} &=  \{ \sum_{i,j} X_{ij} X_{ji} ,   \sum_{p,q} Y_{pq} Y_{qp}  \} \\
    &=  \sum_{i,j ,p,q} 4 X_{ji} Y_{qp} \delta_{jp} \delta_{qi} = 4 \tr XY
\end{align*}

\begin{corollary}\label{lemma: 1factor}
    $\tr X^k Y^l \in \mathrm{Lie}(\mathcal{D})$ for any $k, l \ge 0$.
\end{corollary}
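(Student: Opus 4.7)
The plan is to invoke Lemma \ref{lemma: commutator-general} directly, choosing the indices $a = k+1$, $b = 0$, $c = 0$, $d = l+1$. Both arguments $\tr X^{k+1}$ and $\tr Y^{l+1}$ then lie in $\mathcal{D}$ for all $k,l \ge 0$, so the corollary will follow from a single Poisson bracket computation (the degenerate cases $k=0$ or $l=0$ are themselves already trivial, since $\tr X^k$ and $\tr Y^l$ are directly in $\mathcal{D}$, and $k=l=0$ produces a constant).

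Under this choice of indices, the second double sum in the commutator formula vanishes, because $b=0$ and $c=0$ render both index ranges empty. In the first double sum, a typical summand reads
\[
\tr X^{p-1} Y^{l+1-q} X^{0} Y^{q-1} X^{k+1-p} Y^{0} = \tr X^{p-1} Y^{l} X^{k+1-p},
\]
and cyclicity of the trace collapses every one of these summands, independently of the indices $1 \le p \le k+1$ and $1 \le q \le l+1$, to $\tr X^k Y^l$. Summing the $(k+1)(l+1)$ identical contributions, I expect the clean identity
\[
\{\tr X^{k+1}, \tr Y^{l+1}\} = (k+1)(l+1)\,\tr X^k Y^l,
\]
from which $\tr X^k Y^l = \tfrac{1}{(k+1)(l+1)}\{\tr X^{k+1},\tr Y^{l+1}\} \in \mathrm{Lie}(\mathcal{D})$ at once.

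There is no real obstacle here: the corollary is essentially a one-bracket consequence of the previous lemma, and the only thing to verify is that the computation specialises correctly in the boundary cases. As a sanity check, taking $k=l=0$ gives $\{\tr X,\tr Y\} = n\,\tr(\mathrm{id})/n = n$, reproducing the worked example immediately preceding the statement of the corollary.
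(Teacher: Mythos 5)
Your proposal is correct and coincides with the paper's own argument: the paper likewise deduces the corollary from Lemma \ref{lemma: commutator-general} via the single identity $\{ \tr X^{k}, \tr Y^{l} \} = kl \, \tr X^{k-1} Y^{l-1}$, which is exactly your bracket after the shift $k \mapsto k+1$, $l \mapsto l+1$.
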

\begin{proof}
    By Lemma \ref{lemma: commutator-general}, $\{ \tr X^k, \tr Y^l \} = kl \, \tr X^{k-1} Y^{l-1}$.
\end{proof}

In the following we generate invariant functions of the form $\tr(X^i Y^j) \cdot \tr(X^k Y^l)$, where $i, j, k, l$ are natural numbers and at least one of them is smaller than or equal to 2.

\subsection{Finding $\tr(X^k) \cdot \tr(Y^l)$}
We start with
\begin{align*} 
    \{ \tr X^k, (\tr Y^l)^2 \} = 2 \tr Y^l \{ \tr  X^{k}, \tr Y^{l} \}  = 2kl \, \tr Y^l \,\tr X^{k-1} Y^{l-1}
\end{align*}
When $l = 1$, we have $\tr Y \, \tr X^k$ in the Lie-closure $\mathrm{Lie}(\mathcal{D})$ of $\mathcal{D}$. Under exchange of $X$ and $Y$, we have $ \tr X \, \tr Y^l$. Building the bracket of these two functions, we have the desired term $\tr X^k \, \tr Y^l$ as a summand:
\begin{align*}
    \{ \tr X^k \, \tr Y, \tr X \, \tr Y^l \} &=  \tr X \tr Y \{ \tr X^k , \tr Y^l \} \\
    & \quad + \tr X^k \tr Y^l \{ \tr Y, \tr X \} \\
    &= kl \tr X \, \tr Y \, \tr X^{k-1} Y^{l-1} - n \tr X^k \, \tr Y^l  
\end{align*}
The first term can be obtained as follows:

Using Corollary \ref{lemma: 1factor}
and $\{  (\tr X)^3  , (\tr Y)^2 \} =  6 n  (\tr X)^2  \tr Y$  we have 
\begin{align*}
    \{ \tr X^k Y^l, (\tr X)^2 \, \tr Y \} = k (\tr X)^2 \, \tr X^{k-1} Y^l - 2l \tr X \, \tr Y \, \tr X^k Y^{l-1}
\end{align*}
The first summand can be obtained from 
\begin{align*}
    \{ \tr XY \tr X^2, \tr Y \} &= \tr Y \tr X^2 + 2 \, \tr XY \tr X \\
    \{ (\tr X)^2, \tr XY \tr X \} &= 2 (\tr X)^3 \\
    \{ (\tr X)^3, \tr X^{k-1} Y^{l+1}  \} &=  3(l+1) (\tr X)^2 \, \tr X^{k-1} Y^{l}
\end{align*} 
hence the second summand $\tr X \, \tr Y \, \tr X^k Y^{l-1}$ is in $\mathrm{Lie}(\mathcal{D})$. So does  $\tr X^k \, \tr Y^l$.

\subsection{Finding $\tr(X^i Y^j) \cdot \tr(X^k Y^l)$}
With
\begin{align} \label{equation: trYjtrXkYl}
    \{ \tr X^j , \tr X^k \, \tr Y^l \} = jl \tr X^k \, \tr X^{j-1} Y^{l-1} 
\end{align}
and its counterpart $  \tr X^{l-1} Y^{j-1} \, \tr Y^k$ under $X \leftrightarrow Y$, we can build the bracket
\begin{align*}
    \{ \tr X^l , \tr X^kY \, \tr Y^j \} = l \tr X^{l+k-1} \, \tr Y^j + lj \tr X^k Y \, \tr X^{l-1} Y^{j-1} 
\end{align*}
yielding $\tr X^i Y \, \tr X^k Y^l$. Repeating it once more
\begin{align*}
    \{ \tr X^l , \tr X^k Y^2 \, \tr Y^j \} = 2l \tr X^{l+k-1}Y \, \tr Y^j + lj \tr X^k Y^2 \, \tr X^{l-1} Y^{j-1} 
\end{align*}
we also obtain $\tr X^i Y^2 \, \tr X^k Y^l$.

Summarizing, we have the following functions:
\begin{align} \label{2factors}
    \tr X^i Y^j \tr X^k Y^l \in \mathrm{Lie}(\mathcal{D}), \,  i, j, k, l \in \mathbb{N}_0
\end{align}
where at least for one of the four indices $i, j, k, l \le 2$. The case when one of $i,j,k,l$ is zero is covered in Equation \eqref{equation: trYjtrXkYl}.

\subsection{Finding a smaller set of generators} Following the steps in the preceding two subsections, we show that $\mathcal{D}$ actually admits a finite set of generators:
\begin{align*}
	\mathcal{F} = \{  \tr Y, \tr Y^2, \tr X^3, (\tr X)^2 \} 
\end{align*}

First we have
\begin{lemma}
	$\{\tr X^j, \tr Y^j, (\tr X)^2, (\tr Y)^2 \colon j = 1, 2, 3 \}  \subset \mathrm{Lie} (\mathcal{F})$.
\end{lemma}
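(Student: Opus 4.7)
The plan is to show that all four ``missing'' functions $\tr X$, $\tr X^2$, $(\tr Y)^2$ and $\tr Y^3$ lie in $\mathrm{Lie}(\mathcal{F})$, since the remaining four functions in the stated set ($\tr Y$, $\tr Y^2$, $\tr X^3$, $(\tr X)^2$) are already in $\mathcal{F}$ by definition. Each of the missing functions will be produced by explicit Poisson brackets of elements of $\mathcal{F}$ (and of previously produced Lie-brackets), relying on Lemma~\ref{lemma: commutator-general} and the Leibniz rule.

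First I would strip an $X$ off the generators that depend only on $X$ by bracketing against $\tr Y$. The formula of Lemma~\ref{lemma: commutator-general} specializes to $\{\tr X^a, \tr Y^b\} = ab\, \tr X^{a-1} Y^{b-1}$; in particular $\{\tr X^3, \tr Y\} = 3\tr X^2$, which yields $\tr X^2$. Combined with $\{\tr X, \tr Y\} = n$ and the Leibniz rule, this gives $\{(\tr X)^2, \tr Y\} = 2n\tr X$, so $\tr X$ also lies in $\mathrm{Lie}(\mathcal{F})$.

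To obtain $(\tr Y)^2$ I would exploit the Leibniz rule twice. From $\{\tr X, \tr Y^2\} = 2\tr Y$ we have
\[
\{(\tr X)^2, \tr Y^2\} \;=\; 2\tr X \cdot \{\tr X, \tr Y^2\} \;=\; 4\,\tr X\cdot \tr Y,
\]
producing the mixed product $\tr X \cdot \tr Y$ in $\mathrm{Lie}(\mathcal{F})$. Bracketing this with $\tr Y^2$ and using that $\{\tr Y, \tr Y^2\}=0$, the Leibniz rule yields $\{\tr X \cdot \tr Y,\, \tr Y^2\} = 2(\tr Y)^2$.

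For the remaining function $\tr Y^3$ one has to climb a ``ladder'' that converts $X$'s into $Y$'s one at a time via successive brackets with $\tr Y^2$. Direct applications of Lemma~\ref{lemma: commutator-general} should give $\{\tr X^3, \tr Y^2\} = 6\tr X^2 Y$, then $\{\tr X^2 Y, \tr Y^2\} = 4\tr XY^2$, and finally $\{\tr XY^2, \tr Y^2\} = 2\tr Y^3$. The main effort is the bookkeeping in Lemma~\ref{lemma: commutator-general} at each step: one must verify that the numerical coefficients do not vanish. Since the second sum of that formula is empty whenever the second argument has no $X$ (as is the case for $\tr Y^2$), and since every summand in the first sum produces the same monomial in $X, Y$ after cyclic permutation of the trace, the coefficients are manifestly positive, and no accidental cancellations occur. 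This completes the ladder and establishes the inclusion.
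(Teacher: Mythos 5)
Your proposal is correct and follows essentially the same route as the paper: the paper's proof consists of exactly the brackets $\{\{\{\tr X^3,\tr Y^2\},\tr Y^2\},\tr Y^2\}=48\tr Y^3$, $\{\{(\tr X)^2,\tr Y^2\},\tr Y^2\}=8(\tr Y)^2$, $\{\tr X^3,\tr Y\}=3\tr X^2$ and $\{\tr X^2,\tr Y\}=2\tr X$, which are the same ladders you write out step by step (your only cosmetic deviation is obtaining $\tr X$ from $\{(\tr X)^2,\tr Y\}=2n\tr X$ instead of from $\tr X^2$, which is equally valid).
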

\begin{proof} It is straightforward to verify that
    \begin{align*}
        \{ \{ \{ \tr X^3 , \tr Y^2 \} , \tr Y^2 \} , \tr Y^2 \} &= 48 \tr Y^3  \\
        \{ \{ (\tr X)^2 , \tr Y^2 \} , \tr Y^2  \} &= 8 (\tr Y)^2   \\
        \{ \tr X^3, \tr Y \} &= 3 \tr X^2  \\
        \{ \tr X^2, \tr Y \} &= 2 \tr X 	 \qedhere
    \end{align*}
\end{proof}

It is clear that interchanging the roles of $X$ and $Y$ yields an equivalent set of generators. Next, we make the following simplification.
\begin{lemma} \label{lemma: trXtrYuptoDegree3}
     For any $k \in \mathbb{N}$ we have $$\tr X^k, \tr Y^k \in \mathrm{Lie}(\{ \tr X^j, \tr Y^j  \colon j =1, 2, 3  \})$$
\end{lemma}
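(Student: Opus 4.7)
The plan is to induct on $k$. Since the generating set $\mathcal{D}_0 := \{\tr X^j, \tr Y^j : j = 1, 2, 3\}$ is invariant under the swap $X \leftrightarrow Y$, it suffices to prove the statement for $\tr X^k$ and then apply the same argument with roles swapped for $\tr Y^k$. The base cases $k = 1, 2, 3$ are immediate since $\tr X^k \in \mathcal{D}_0$.

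For the inductive step, assuming $\tr X^k \in \mathrm{Lie}(\mathcal{D}_0)$ for some $k \geq 3$, I would produce $\tr X^{k+1}$ by a two-step bootstrap: first drop the $X$-power by one while picking up a single $Y$, then use $\tr X^3$ to raise the $X$-power back up by two while eliminating the $Y$. For the first step, Lemma \ref{lemma: commutator-general} specialises to $\{\tr X^a, \tr Y^b\} = ab\, \tr X^{a-1}Y^{b-1}$, hence
\[
\{\tr X^k, \tr Y^2\} = 2k\, \tr X^{k-1}Y \in \mathrm{Lie}(\mathcal{D}_0).
\]
For the second step, applying Lemma \ref{lemma: commutator-general} with $(a,b,c,d) = (3,0,k-1,1)$ makes the second sum empty (since $b = 0$) and collapses the first sum, via the unique value $q = 1$, so that the inner monomial becomes $X^{p-1} \cdot X^{k-1} \cdot X^{3-p} = X^{k+1}$; summing over $p = 1, 2, 3$ yields
\[
\{\tr X^3, \tr X^{k-1}Y\} = 3\, \tr X^{k+1}.
\]
Therefore $\tr X^{k+1} \in \mathrm{Lie}(\mathcal{D}_0)$, which closes the induction.

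There is no substantive obstacle here: both brackets are direct specialisations of Lemma \ref{lemma: commutator-general}, and the whole argument is essentially a single inductive step once the right pair of brackets is identified. The one point worth emphasising is that $\tr X^3$ (rather than $\tr X$ or $\tr X^2$) is what makes the round trip $\tr X^k \leadsto \tr X^{k-1}Y \leadsto \tr X^{k+1}$ gain exactly one $X$-power on the nose; a smaller choice would leave the $X$-exponent stalled or decreasing.
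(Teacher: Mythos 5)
Your proof is correct and follows exactly the paper's argument: induction on $k \ge 3$ using the two brackets $\{\tr X^k, \tr Y^2\} = 2k\,\tr X^{k-1}Y$ and $\{\tr X^3, \tr X^{k-1}Y\} = 3\,\tr X^{k+1}$, with the symmetric argument for $\tr Y^{k+1}$. Both specialisations of Lemma \ref{lemma: commutator-general} are computed correctly, so there is nothing to add.
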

\begin{proof}
    For $k \le 3$ the statement is trivial, and hence we proceed by induction on $k \ge 3$. The induction step follows from
    \begin{align*}
        \{ \tr X^k , \tr Y^2 \} &= 2k \, \tr X^{k-1}Y \\
        \{ \tr X^3, \tr X^{k-1}Y \} &= 3 \, \tr X^{k+1}
    \end{align*} 
    Similarly for $\tr Y^{k+1}$. 
\end{proof}

In order to make use of the forthcoming Lemma \ref{lemma: hep-hweightnon0}, we shall check that $(\tr XY)^2 \in \mathrm{Lie}(\mathcal{F})$. Starting with
\begin{align*}
    \{ \tr X^3, (\tr Y)^2 \} &= 6 \tr X^2 \tr Y \\
    \{ (\tr X)^2, \tr Y^3 \} &= 6 \tr X \tr Y^2 
\end{align*}
we continue to build bracket between them, which leads us to 
\begin{align*} 
    \{ \tr X^2 \tr Y, \tr X \tr Y^2 \} = 4  \tr XY \tr X \tr Y - n \tr X^2 \tr Y^2
\end{align*}
To conclude that $\tr X^2 \tr Y^2 \in \mathrm{Lie}(\mathcal{F})$, we shall generate the term $\tr XY \tr X \tr Y$. 
\begin{align*}
    \{ \tr X^2, \tr X \tr Y^2 \} &= 4 \tr XY \tr X \\ 
    \{ (\tr X)^2, \tr XY \tr X \} &= 2 (\tr X)^3 \\
    \{ (\tr X)^3, (\tr Y)^2 \} &= 6n (\tr X)^2 \tr Y \\
    \{ \tr XY^2, (\tr X)^2 \tr Y \} &= (\tr X)^2 \tr Y^2 - 4 \tr XY \tr X \tr Y  
\end{align*}
The first term $(\tr X)^2 \tr Y^2$ of the last equation is in $\mathrm{Lie}(\mathcal{F})$ since $\{ (\tr X)^3, \tr Y^3 \} = 9 (\tr X)^2 \tr Y^2$. Thus $\tr XY \tr X \tr Y$ is in $\mathrm{Lie}(\mathcal{F})$. After obtaining $\tr X^2 \tr Y^2$, we proceed with
\begin{align*}
    \{ \tr X^2 \tr Y^2, \tr Y^2 \} &= 4 \tr XY \tr Y^2 \\
    \{ \tr X^2, \tr XY \tr Y^2 \} &= 2 \tr X^2 \tr Y^2 + 4 (\tr XY)^2
\end{align*}
which shows that $(\tr XY)^2 \in \mathrm{Lie}(\mathcal{F})$. Hence we can obtain $(\tr X^k)^2$ for any $k \in \mathbb{N}$ from $\tr X^k$ and $(\tr XY)^2$ in the same manner as in Lemma \ref{lemma: hep-hweightnon0}. Thus, we have proved the following Lemma.

\begin{lemma}
    $\mathcal{D} \subset \mathrm{Lie}(\mathcal{F})$.
\end{lemma}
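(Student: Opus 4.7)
The strategy is to stitch together the three preparatory results already in place. The first preparatory lemma puts $\{\tr X^j, \tr Y^j, (\tr X)^2, (\tr Y)^2 \colon j = 1, 2, 3\}$ into $\mathrm{Lie}(\mathcal{F})$; Lemma \ref{lemma: trXtrYuptoDegree3} bootstraps this to $\tr X^k, \tr Y^k \in \mathrm{Lie}(\mathcal{F})$ for every $k \in \mathbb{N}$; and the paragraphs immediately preceding the lemma place the seed $(\tr XY)^2$ into $\mathrm{Lie}(\mathcal{F})$. What remains is therefore only to produce the squares $(\tr X^k)^2$ and $(\tr Y^k)^2$ for arbitrary $k$.

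The plan for $(\tr X^k)^2$ is a two-step bracket argument. First I would record, via Lemma \ref{lemma: commutator-general}, the basic identities
\[
\{\tr XY, \tr X^k\} = -k \tr X^k, \qquad \{\tr X^l, \tr X^k\} = 0,
\]
where the second holds because neither function involves $Y$. Applying the Leibniz rule to the seed gives
\[
\{(\tr XY)^2, \tr X^k\} = 2 \tr XY \cdot \{\tr XY, \tr X^k\} = -2k \, \tr XY \cdot \tr X^k,
\]
so $\tr XY \cdot \tr X^k \in \mathrm{Lie}(\mathcal{F})$ for every $k$. Bracketing this new element with $\tr X^l$ and again using the Leibniz rule, the term involving $\{\tr X^l, \tr X^k\}$ vanishes and we are left with
\[
\{\tr X^l, \tr XY \cdot \tr X^k\} = \tr X^k \cdot \{\tr X^l, \tr XY\} = l \, \tr X^k \cdot \tr X^l.
\]
Taking $l = k$ delivers $(\tr X^k)^2 \in \mathrm{Lie}(\mathcal{F})$.

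The case of $(\tr Y^k)^2$ is parallel, using the symmetric identity $\{\tr XY, \tr Y^k\} = k \tr Y^k$ from Lemma \ref{lemma: commutator-general}, which produces $\tr XY \cdot \tr Y^k$ and then all products $\tr Y^k \cdot \tr Y^l$ by the same two-bracket trick, so in particular $(\tr Y^k)^2 \in \mathrm{Lie}(\mathcal{F})$. Combined with $\tr X^k, \tr Y^k \in \mathrm{Lie}(\mathcal{F})$, this exhausts the generating set $\mathcal{D} = \{\tr X^i, \tr Y^j, (\tr X^i)^2, (\tr Y^j)^2\}$ and proves the inclusion.

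The computations above are completely routine once the seed $(\tr XY)^2$ is in hand; the real obstacle of the lemma was handled in the long preparatory block that pieced $(\tr XY)^2$ together out of $\tr X^3, \tr Y^3, (\tr X)^2, (\tr Y)^2$, and not in the present final step. The only point requiring care is bookkeeping of signs when invoking antisymmetry of the Poisson bracket.
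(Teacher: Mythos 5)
Your argument is correct and matches the paper's proof in essence: the paper likewise reduces the lemma to the preparatory results ($\tr X^k, \tr Y^k \in \mathrm{Lie}(\mathcal{F})$ and the seed $(\tr XY)^2 \in \mathrm{Lie}(\mathcal{F})$) and then obtains $(\tr X^k)^2$, and symmetrically $(\tr Y^k)^2$, by the same weight-based two-bracket trick that you carry out explicitly (the paper simply cites the mechanism of Lemma \ref{lemma: hep-hweightnon0} instead of writing the brackets out). Your sign bookkeeping and the vanishing of $\{\tr X^l,\tr X^k\}$ are correct, so nothing is missing.
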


\subsection{Brackets between generating invariant functions}
Let 
\begin{align*}
    a = \tr X, \, b = \tr Y, \, c = \frac{1}{2}\tr X^2, \, d = \frac{1}{2}\tr Y^2, \, e = \tr XY.
\end{align*}
Let $\mathcal{A} = \{ a, b, c, d, e \}$. The brackets of its elements are:
\begin{align*}
    &\{ a, b \} = n,  \, \{ a, c \} = 0, \, \{a, d \} = b, \, \{a, e \} = a, \, \{ b, c \} = -a, \\ & \{b, d \} = 0, \, \{b, e \} = -b, \, \{c, d \} = e, \,   \{c, e \} = 2 c, \, \{d, e \} = -2d.
\end{align*}
These five elements form a closed set under brackets. Observe that for any element $h$ from $\mathcal{A}$ we have that 
\begin{align} \label{weight}
    \{ h, e \} = w_h h
\end{align}
where $w_h$ is an integer, called the \emph{weight} of $h$. Then different elements carry different weights, and the weights of $a, b, c, d, e$ are $1, -1, 2, -2, 0$, respectively. In particular, $e$ is the only element of $\mathcal{A}$ with weight 0. For the trace of  a monomial in $X,Y$, the weight is well-defined, since taking bracket with $ e= \tr XY $ by Lemma \ref{lemma: commutator-general} amounts to assigning each factor $X$ in the monomial weight $1$, each factor $Y$ weight $-1$, and adding them up. 

\begin{lemma}\label{lemma: productweightsum}
    Let $h_1, h_2$ be traces of monomials in $X,Y$. Then $w_{h_1 h_2} = w_{h_1} + w_{h_2} = w_{\{h_1, h_2 \}}$.
\end{lemma}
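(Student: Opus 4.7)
The plan is to derive both claimed equalities directly from the two defining algebraic properties of the Poisson bracket recalled earlier in the excerpt: the Leibniz rule for the product part, and the Jacobi identity together with antisymmetry for the bracket part. In each case the identity will drop out once one rewrites $\{h_1 h_2, e\}$ or $\{\{h_1, h_2\}, e\}$ using those rules and then compares with the definition \eqref{weight} of the weight as the eigenvalue of the operator $\{\cdot, e\}$.

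For the multiplicativity $w_{h_1 h_2} = w_{h_1} + w_{h_2}$, I would apply Leibniz to obtain
\begin{align*}
\{h_1 h_2, e\} = h_1 \{h_2, e\} + h_2 \{h_1, e\} = w_{h_2}\, h_1 h_2 + w_{h_1}\, h_1 h_2 = (w_{h_1} + w_{h_2})\, h_1 h_2,
\end{align*}
which exhibits $h_1 h_2$ as an eigenvector of $\{\cdot, e\}$ with the claimed weight. For the bracket identity $w_{\{h_1, h_2\}} = w_{h_1} + w_{h_2}$, I would invoke the Jacobi identity on the triple $(h_1, h_2, e)$ and rearrange:
\begin{align*}
\{\{h_1, h_2\}, e\} &= -\{\{h_2, e\}, h_1\} - \{\{e, h_1\}, h_2\} \\
&= \{h_1, \{h_2, e\}\} - \{h_2, \{h_1, e\}\} \\
&= w_{h_2}\{h_1, h_2\} - w_{h_1}\{h_2, h_1\} \\
&= (w_{h_1} + w_{h_2})\,\{h_1, h_2\},
\end{align*}
pulling the scalar weights out by bilinearity and using $\{h_2, h_1\} = -\{h_1, h_2\}$ in the last step.

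Since each part reduces to a one-line application of a standard identity, there is no substantive obstacle. The only thing worth flagging is that these computations also confirm that $h_1 h_2$ and $\{h_1, h_2\}$ really are eigenvectors of $\{\cdot, e\}$, so that the weights $w_{h_1 h_2}$ and $w_{\{h_1, h_2\}}$ are well-defined in the sense of \eqref{weight}; the assumption that $h_1, h_2$ are traces of monomials (so that the weight is defined on each factor to begin with) is used only for this well-definedness and plays no further role in the algebra.
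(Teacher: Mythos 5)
Your proposal is correct and follows exactly the paper's argument, which simply cites the Leibniz rule for $w_{h_1 h_2}$ and the Jacobi identity (with antisymmetry) for $w_{\{h_1,h_2\}}$; you have merely written out the two one-line computations the paper leaves implicit. Nothing further is needed.
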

\begin{proof}
    This follows from the Leibniz rule and the Jacobi identity of the Poisson bracket.
\end{proof}

\begin{proposition}\label{proposition: monomial-abcde}
    On $\mathrm{Mat}(n \times n; \mathbb{C}) \times \mathrm{Mat}(n \times n; \mathbb{C})$, the Lie-closure $\mathrm{Lie}(\mathcal{F})$ of
    \begin{align*}
        \mathcal{F} = \{  \tr Y, \tr Y^2, \tr X^3, (\tr X)^2 \} 
    \end{align*}
    contains all products of the form
    \begin{align}
        h_1 h_2 \cdots h_m
    \end{align}
    where $m \in \mathbb{N}, h_i \in \mathcal{A}, i = 1, 2, \dots, m$. 
\end{proposition}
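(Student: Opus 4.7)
My plan is to prove the proposition by induction on the number of factors $m$. The base cases $m=1$ and $m=2$ are immediate: the previously established inclusion $\mathcal{D}\subset\mathrm{Lie}(\mathcal{F})$ together with the identity $e=\{c,d\}$ shows that all of $\mathcal{A}$ lies in $V:=\mathrm{Lie}(\mathcal{F})$, and every length-$2$ product $h_1h_2$ with $h_1,h_2\in\mathcal{A}$ is of the form $\tr X^{i_1}Y^{j_1}\cdot\tr X^{i_2}Y^{j_2}$ with all four indices at most $2$, so Equation \eqref{2factors} applies.

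For the induction step, the main mechanism is that bracketing a length-$m$ element $N\in V$ with a length-$2$ product $pq\in V$ (where $p,q\in\mathcal{A}$) yields a sum of terms of lengths $m+1$, $m$, and $m-1$. Indeed, by Leibniz
\[
\{N,pq\}=p\,\{N,q\}+q\,\{N,p\},
\]
and each $\{h_i,p\}$ is, from the explicit bracket table among elements of $\mathcal{A}$, either zero, a scalar constant, or another element of $\mathcal{A}$. The length-$\le m$ summands all lie in $V$ by the induction hypothesis; to extract the length-$(m+1)$ piece as an element of $V$, I invoke the $(X,Y)$-bi-grading: every generator in $\mathcal{F}$ is bi-homogeneous in $(X,Y)$, and the Poisson bracket shifts bi-degree by $(-1,-1)$, so $V$ is bi-graded and every bi-homogeneous component of a $V$-element again lies in $V$. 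For a chosen target $M=h\cdot N$ of length $m+1$, the remaining task is to select $N$ (of length $m$, in $V$ by induction) and $pq\in V$ so that $M$ appears with nonzero coefficient in the length-$(m+1)$ part of $\{N,pq\}$.

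Organising this by weight (which is additive under the Poisson bracket), I would work through: (i) pure powers $a^m$, $b^m$, $c^m$, $d^m$, using brackets such as $\{c^m,ce\}=2mc^{m+1}$ and $\{a^{m-1}b,c\}=-a^m$; (ii) $e$-free mixed monomials, via the ``substitution'' brackets $\{a,d\}=b$, $\{b,c\}=-a$, $\{c,d\}=e$; and finally (iii) monomials involving $e$ with positive multiplicity, by peeling off $e$-factors one at a time using identities such as $\{ce^{k-1},cd\}=-ce^k$ (which follow from $\{c,cd\}=ce$ and $\{e,cd\}=0$) and $\{ce^m,d\}=2m\,cde^{m-1}+e^{m+1}$. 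The main obstacle is case (iii): because $e$ has weight zero and $\{e,W\}=0$ for every weight-zero monomial $W$, one cannot add $e$-factors directly by bracketing with $e$. Powers $e^k$ and mixed weight-zero monomials therefore require a nested induction in which an $e$-factor is first added to a weight-nonzero intermediate product and only afterwards converted into the desired weight-zero target. The forthcoming Lemma \ref{lemma: hep-hweightnon0}, already invoked earlier for $(\tr X^k)^2$, provides the template for this step.
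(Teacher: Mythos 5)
Your outline agrees with the paper for the easy parts --- the base cases $m\le 2$ via Equation \eqref{2factors}, the case $m=3$, and all monomials reachable through the weight mechanism of Lemma \ref{lemma: hep-hweightnon0} --- but it has a genuine gap in your case (iii), exactly where the paper itself has to work hardest: the weight-zero monomials, in particular $e^p$ for $p\ge 4$ and companions such as $cde^{p-1}$ and $c^2d^2$. The identity you quote, $\{ce^m,d\}=2m\,cde^{m-1}+e^{m+1}$, produces $e^{m+1}$ only after $cde^{m-1}$ is known; but $cde^{m-1}$ has the same length $m+1$, the same bidegree and the same weight as $e^{m+1}$, so it is not covered by your induction hypothesis, your bi-grading cannot separate the two (bidegree does not see the number of trace factors), and Lemma \ref{lemma: hep-hweightnon0} is inapplicable because it requires $w_h\neq 0$. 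In fact, within the mechanism you describe (bracketing a length-$m$ monomial in $\mathcal{A}$ with a quadratic monomial $pq$, $p,q\in\mathcal{A}$) the defect is provable at the first nontrivial stage $m+1=4$: the only brackets whose length-$4$ part meets the $a,b$-free weight-zero sector are $\{ce^2,de\}$, $\{de^2,ce\}$, $\{c^2d,de\}$, $\{cd^2,ce\}$, $\{c^2e,d^2\}$, $\{d^2e,c^2\}$ (while $\{cde,cd\}$, $\{cde,e^2\}$, $\{e^3,cd\}$, $\{e^3,e^2\}$ vanish), and these yield only scalar multiples of the two combinations $e^4+6cde^2$ and $cde^2+c^2d^2$ modulo lower-length terms. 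So the scheme reaches only a two-dimensional subspace of $\mathrm{span}\{e^4,\,cde^2,\,c^2d^2\}$ and can never isolate $e^4$; "peeling off $e$-factors" as you propose does not exist as stated.

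This is precisely the point where the paper leaves the Poisson subalgebra generated by $\mathcal{A}$: it manufactures $cde^{p-1}$ from auxiliary invariants of higher degree, namely $(\tr X^{i}Y)\,b^je$, $(\tr X^{i+j})\,b^id$ and $(\tr X^{p-i+1})\,b^{p-i-1}de^{i}$ (Lemma \ref{lemma: thirdsummand}, Lemma \ref{lemma: trXi+mbid}, Corollary \ref{corollary: trXi+3bidej}, Lemma \ref{lemma: ep}), and only then extracts $e^{p+1}$ from $\{ce^p,d\}$; afterwards the remaining monomials $a^ib^jc^kd^le^p$ are assembled much as you suggest. To repair your argument you would need a comparable excursion outside monomials in $\mathcal{A}$ (higher traces $\tr X^k$ and mixed traces $\tr X^iY$ are available from Lemma \ref{lemma: trXtrYuptoDegree3} and Corollary \ref{lemma: 1factor}). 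Two minor further points: $\{ce^{k-1},cd\}=+\,ce^k$, not $-ce^k$; and it is the subtraction of lower-length terms known by induction, not the bi-grading, that legitimately isolates the length-$(m+1)$ part of a bracket.
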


\begin{remark}
Since we only need $a, b, c, d, e$ to generate the ring of invariant functions for $n=2$, see \cite{MR0223379}*{Theorem 5}, Proposition \ref{proposition: monomial-abcde} proves already the Hamiltonian holomorphic density property for $n=2$. However, all the results in this section are also valid for $n>2$. 
\end{remark}

Note that $\mathcal{A} \subset \mathrm{Lie}(\mathcal{F})$ since $e = \{c, d\}$. We first prove the statement for $m = 3$. The case $m = 1$ is obvious and $m = 2$ is covered by Equation \eqref{2factors} in the preceding subsection. 

Observe that
\begin{align*} 
    \{ he, e^2 \} = 2 w_h h e^2, 
\end{align*}
hence $h e^2 \in \mathrm{Lie}(\mathcal{F})$ when $h \neq e$. Then consider
\begin{align*}
    \{ c^2, d^2 \} &= 4 cde, \\
    \{ c e^2, d \} &= 4 cde +  e^3 . 
\end{align*}
These brackets induce that $e^3 \in \mathrm{Lie}(\mathcal{F})$. Then $h e^2 \in \mathrm{Lie}(\mathcal{F})$ for any $h$ from $\mathcal{A}$. Now take any $h_1, h_2, h_3 \in \mathcal{A}$,
\begin{align*}
    \{ h_1 e^2, h_2 \} = \{h_1, h_2 \} e^2 - 2 w_{h_2} h_1 h_2 e 
\end{align*}
Since $\{h_1, h_2 \} e^2 \in \mathrm{Lie}(\mathcal{F})$, $h_1 h_2 e$ is in the algebra when $w_{h_2} \neq 0$. Otherwise $h_2 = e$, since it is the only element in $\mathcal{A}$ of weight $0$ and we are back to $h_1 e^2 \in \mathrm{Lie}(\mathcal{F})$.  Continue with the same strategy
\begin{align*}
    \{ h_1 h_2 e, h_3 \} &= \{h_1, h_3 \} h_2 e + \{h_2, h_3 \} h_1 e - w_{h_3} h_1 h_2 h_3
\end{align*}
this covers the case $m = 3$. 

In a similar spirit, the following lemma makes use of $e^2 \in \mathrm{Lie}(\mathcal{F})$ to include exponents of an element of nonzero weight in $\mathrm{Lie}(\mathcal{F})$.


\begin{lemma}\label{lemma: hep-hweightnon0}
    Let $h \in \mathrm{Lie}(\mathcal{F})$ with weight $w_h \neq 0$, let $k \ge 1, p \ge 0$.  Then $h^k e^p \in \mathrm{Lie}(\mathcal{F})$.
\end{lemma}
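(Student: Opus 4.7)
The plan is a double induction that leverages the already-established fact $e^2 \in \mathrm{Lie}(\mathcal{F})$ (from the preceding subsection) together with the simple commutation rules $\{h,e\} = w_h h$, $\{e,e\} = 0$, and the Leibniz rule for the Poisson bracket.

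The central identity is the following: for any $f \in \mathrm{Lie}(\mathcal{F})$,
\[
\{f, e^2\} = 2e\{f,e\}.
\]
Applied to $f = h^{k} e^{p-1}$ (assumed to be in $\mathrm{Lie}(\mathcal{F})$), and using $\{h^k e^{p-1}, e\} = k h^{k-1} e^{p-1} \{h,e\} + (p-1) h^k e^{p-2} \{e,e\} = k w_h h^k e^{p-1}$, this yields
\[
\{h^k e^{p-1}, e^2\} = 2 k w_h \, h^k e^p.
\]
Since $e^2 \in \mathrm{Lie}(\mathcal{F})$ and $k w_h \neq 0$, we obtain $h^k e^p \in \mathrm{Lie}(\mathcal{F})$ whenever $h^k e^{p-1} \in \mathrm{Lie}(\mathcal{F})$. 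By induction on $p$, the lemma therefore reduces to the case $p = 0$, namely showing $h^k \in \mathrm{Lie}(\mathcal{F})$ for every $k \geq 1$.

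To handle $h^k$, I proceed by induction on $k$. The base case $k = 1$ is given by the hypothesis $h \in \mathrm{Lie}(\mathcal{F})$. For the induction step, assume $h^{k-1} \in \mathrm{Lie}(\mathcal{F})$. Applying the previous paragraph (with $k$ replaced by $k-1$ and $p=1$), we get $h^{k-1} e \in \mathrm{Lie}(\mathcal{F})$. Then, since $\{h^{k-1}, h\} = (k-1) h^{k-2} \{h,h\} = 0$, the Leibniz rule gives
\[
\{h^{k-1} e, h\} = h^{k-1} \{e, h\} + e \{h^{k-1}, h\} = -w_h \, h^{k}.
\]
Because $w_h \neq 0$ and both $h^{k-1} e$ and $h$ lie in $\mathrm{Lie}(\mathcal{F})$, we conclude $h^k \in \mathrm{Lie}(\mathcal{F})$, completing the induction.

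There is no real obstacle here; the statement is a clean bookkeeping argument that succeeds precisely because $w_h \neq 0$ makes every bracket with $e$ (or $e^2$) produce the desired element with a nonzero coefficient. The only non-trivial ingredient borrowed from earlier is the membership $e^2 \in \mathrm{Lie}(\mathcal{F})$, without which the induction on $p$ could not start.
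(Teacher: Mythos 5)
Your proof is correct and takes essentially the same approach as the paper: a double induction driven by the two bracket identities $\{h^k e^{p}, e^2\} = 2kw_h\, h^k e^{p+1}$ and bracketing with $h$ itself, both exploiting $w_h \neq 0$ and the previously established membership $e^2 \in \mathrm{Lie}(\mathcal{F})$. The paper merely arranges the inductions in the opposite order (first $he^p$ by induction on $p$, then $h^k e^p$ by induction on $k$ via $\{h^k e^p, h\} = -p w_h\, h^{k+1} e^{p-1}$), which is an immaterial difference.
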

\begin{proof}
    First we show that $h e^p \in \mathrm{Lie}(\mathcal{F})$ by induction. For $p=0$ it is obvious. The induction step is provided by $ \{ h e^p, e^2 \} = 2 w_h h e^{p+1}$.  Next we show $h^k e^p \in \mathrm{Lie}(\mathcal{F})$ by induction on $k$. The case $k=1$ has been cleared. The induction step is given by $\{ h^k e^p, h \} = - p w_h \, h^{k+1} e^{p-1} $.
\end{proof}

Applying Lemma \ref{lemma: hep-hweightnon0} to $h \in \{ a,b,c,d \}$, we obtain 
\begin{align*}
    a^i, b^j, c^k, d^l \in \mathrm{Lie}(\mathcal{F}), \quad  i,j,k,l \in \mathbb{N}
\end{align*}
without using Lemma \ref{lemma:completeVFs}. These are now used to generate the following terms:
\begin{align}
    \{ a^i, b^j \} &= n i j \, a^{i-1} b^{j-1} \label{aibj} \\ 
    \{ c^k, d^l \} &=  k l \, c^{k-1} d^{l-1} e  \label{ckdle}
\end{align}

\begin{corollary} \label{corollary: ckep}
    Let $k,l \ge 1, p \ge 0$. Then $c^k e^p, d^l e^p \in \mathrm{Lie}(\mathcal{F})$.
\end{corollary}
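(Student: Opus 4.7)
The plan is to observe that this corollary is an immediate specialization of the preceding Lemma \ref{lemma: hep-hweightnon0}. That lemma says that whenever $h \in \mathrm{Lie}(\mathcal{F})$ has nonzero weight $w_h$, then $h^k e^p \in \mathrm{Lie}(\mathcal{F})$ for all $k \geq 1$ and $p \geq 0$. So I only need to verify the two hypotheses for $h = c$ and for $h = d$.

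First I would check membership. We have $\tr Y^2 \in \mathcal{F}$, so $d = \frac{1}{2}\tr Y^2 \in \mathrm{Lie}(\mathcal{F})$ trivially. For $c = \frac{1}{2}\tr X^2$, recall from Lemma \ref{lemma: trXtrYuptoDegree3} (or directly from the bracket $\{\tr X^3, \tr Y\} = 3 \tr X^2$) that $\tr X^2 \in \mathrm{Lie}(\mathcal{F})$, hence $c \in \mathrm{Lie}(\mathcal{F})$.

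Next I would check the weight condition. From the bracket table in Equation \eqref{weight} we have $\{c, e\} = 2c$ and $\{d, e\} = -2d$, so $w_c = 2 \neq 0$ and $w_d = -2 \neq 0$. Applying Lemma \ref{lemma: hep-hweightnon0} with $h = c$ yields $c^k e^p \in \mathrm{Lie}(\mathcal{F})$, and with $h = d$ yields $d^l e^p \in \mathrm{Lie}(\mathcal{F})$, which is exactly the claim.

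There is essentially no obstacle here: both items to be verified are already recorded earlier, and the corollary is a direct substitution into the preceding lemma. The role of this corollary is presumably bookkeeping, making explicit two particular families $c^k e^p$ and $d^l e^p$ that will be needed in subsequent bracket computations to build up monomials in $a, b, c, d, e$ as in Proposition \ref{proposition: monomial-abcde}.
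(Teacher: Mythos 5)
Your proposal is correct and matches the paper's own proof, which likewise deduces the corollary directly from Lemma \ref{lemma: hep-hweightnon0} using that $c,d \in \mathrm{Lie}(\mathcal{F})$ have nonzero weight. Your extra verifications (membership via $\{\tr X^3, \tr Y\} = 3\tr X^2$ and the weights $w_c = 2$, $w_d = -2$ from the bracket table) are exactly the facts the paper relies on implicitly.
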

\begin{proof} Since $c,d \in \mathrm{Lie}(\mathcal{F})$ are of nonzero weight, the claim follows from Lemma \ref{lemma: hep-hweightnon0}. 
\end{proof}

\begin{remark}
    At this stage, it is not clear if $e^p$ is in $\mathrm{Lie}(\mathcal{F})$ for $p \ge 4$. To show this, functions of higher degree in $X$ are needed here. 
\end{remark}

\begin{lemma}\label{lemma: thirdsummand}
    Let $i,j \in \mathbb{N}$ such that $ i - j -1 \neq 0$. Then $(\tr X^{i}Y) b^j e \in \mathrm{Lie}(\mathcal{F})$.
\end{lemma}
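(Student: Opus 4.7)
The plan is to produce $(\tr X^{i}Y)\,b^{j}\,e$ in two Poisson-bracket steps. First I would assemble the product $(\tr X^{i}Y)\,b^{j}$ inside $\mathrm{Lie}(\mathcal{F})$, and then append the extra factor $e$ by bracketing with $e^{2}$, exploiting that the hypothesis $i-j-1\neq 0$ is exactly the condition that the weight of $(\tr X^{i}Y)\,b^{j}$ is nonzero.

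For the first step, Corollary \ref{lemma: 1factor} gives $\tr X^{i+1}Y\in \mathrm{Lie}(\mathcal{F})$, while Lemma \ref{lemma: hep-hweightnon0} applied to $h=b$ (of nonzero weight $-1$) yields $b^{j+1}\in \mathrm{Lie}(\mathcal{F})$. A direct application of Lemma \ref{lemma: commutator-general} with $a=i+1,\, b=1,\, c=0,\, d=1$ gives $\{\tr X^{i+1}Y,\,\tr Y\}=(i+1)\tr X^{i}Y$, so by the Leibniz rule
\begin{align*}
\{\tr X^{i+1}Y,\;b^{j+1}\} \;=\; (i+1)(j+1)\,(\tr X^{i}Y)\,b^{j},
\end{align*}
which forces $(\tr X^{i}Y)\,b^{j}\in \mathrm{Lie}(\mathcal{F})$.

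For the second step, recall that $e^{2}=(\tr XY)^{2}\in \mathrm{Lie}(\mathcal{F})$ was established earlier in the subsection on the smaller generating set. Using the weight identity $\{h,e^{2}\}=2e\{h,e\}=2w_{h}\,h\,e$ (valid for any trace of monomials $h$ of weight $w_{h}$, by \eqref{weight} and Lemma \ref{lemma: productweightsum}) together with the Leibniz rule,
\begin{align*}
\{(\tr X^{i}Y)\,b^{j},\,e^{2}\}
&= (\tr X^{i}Y)\{b^{j},e^{2}\} + b^{j}\{\tr X^{i}Y,e^{2}\} \\
&= -2j\,(\tr X^{i}Y)\,b^{j}\,e + 2(i-1)\,(\tr X^{i}Y)\,b^{j}\,e \\
&= 2(i-j-1)\,(\tr X^{i}Y)\,b^{j}\,e.
\end{align*}
Dividing by $2(i-j-1)$, which is nonzero by hypothesis, yields the result.

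There is no genuine obstacle here; the argument is really just a clean two-step calculation. The conceptual point is that the operator $\{\,\cdot\,,e^{2}\}$ acts as ``multiplication by $e$ times the weight'', and the assumption $i-j-1\neq 0$ is precisely the nonvanishing of $w_{(\tr X^{i}Y)\,b^{j}}=(i-1)+(-j)$. The weight-zero case $i=j+1$ falls outside the scope of this trick and would have to be handled by a separate argument, presumably in a subsequent lemma.
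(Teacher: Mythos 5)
Your proof is correct and follows essentially the same route as the paper: the same bracket $\{\tr X^{i+1}Y, b^{j+1}\} = (i+1)(j+1)(\tr X^i Y)b^j$, followed by appending the factor $e$ via the weight mechanism, which is exactly the content of Lemma \ref{lemma: hep-hweightnon0} (with $k=1$, $p=1$) that the paper cites instead of unpacking the bracket with $e^2$ explicitly as you do. Your justification of $b^{j+1}\in\mathrm{Lie}(\mathcal{F})$ via Lemma \ref{lemma: hep-hweightnon0} rather than the paper's citation of Lemma \ref{lemma:completeVFs} is an immaterial variation.
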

\begin{proof}
    By Lemma \ref{lemma:completeVFs}, $b^{j+1} \in \mathrm{Lie}(\mathcal{F})$ and by Corollary \ref{lemma: 1factor}, $\tr X^{i+1}Y \in \mathrm{Lie}(\mathcal{F})$, therefore
    $$\{ \tr X^{i+1} Y, b^{j+1} \} = (i+1)(j+1) (\tr X^{i}Y) b^{j} \in \mathrm{Lie}(\mathcal{F}).$$ Since $(\tr X^{i}Y) b^j$ carries weight $i-1-j \neq 0$ by Lemma \ref{lemma: commutator-general} and Lemma \ref{lemma: productweightsum}, $(\tr X^{i}Y) b^j e \in \mathrm{Lie}(\mathcal{F})$ by Lemma \ref{lemma: hep-hweightnon0}.
\end{proof}

\begin{lemma} \label{lemma: trXi+mbid}
    Let $i \ge 0$ and $j \in \{ 3, 4\}$. Then $(\tr X^{i+j}) b^i d \in \mathrm{Lie}(\mathcal{F})$.
\end{lemma}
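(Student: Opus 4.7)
My plan is to extract $(\tr X^{i+j}) b^i d$ as one summand of a Poisson bracket all of whose other ingredients are already in $\mathrm{Lie}(\mathcal{F})$. Using the Leibniz rule together with $\{\tr X^a, b\} = a\,\tr X^{a-1}$ and $\{\tr X^a, d\} = a\,\tr X^{a-1} Y$, I compute
\[
\{\tr X^{i+j+1},\, b^{i+1} d\} = (i+1)(i+j+1)\,(\tr X^{i+j}) b^i d + (i+j+1)\, b^{i+1}(\tr X^{i+j} Y),
\]
so it suffices to place each of $\tr X^{i+j+1}$, $b^{i+1} d$, and $b^{i+1}(\tr X^{i+j} Y)$ in $\mathrm{Lie}(\mathcal{F})$.

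The first factor is immediate from Lemma \ref{lemma: trXtrYuptoDegree3}. The third factor is produced in the same style as Lemma \ref{lemma: thirdsummand}: the bracket
\[
\{(\tr X^{i+j} Y) b^i e,\, b\} = (i+j)(\tr X^{i+j-1} Y) b^i e + (\tr X^{i+j} Y) b^{i+1}
\]
isolates it, provided both $(\tr X^{i+j} Y) b^i e$ and $(\tr X^{i+j-1} Y) b^i e$ lie in $\mathrm{Lie}(\mathcal{F})$. Applying Lemma \ref{lemma: thirdsummand} to these two expressions requires the weight conditions $j-1 \neq 0$ and $j-2 \neq 0$ respectively, which is precisely where the hypothesis $j \in \{3,4\}$ enters.

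The remaining ingredient $b^{i+1} d$ is not yet available and has to be established by a short auxiliary induction. Lemma \ref{lemma: hep-hweightnon0} provides $d^l \in \mathrm{Lie}(\mathcal{F})$ for all $l$; the bracket $\{a, d^{l+1}\} = (l+1)\,b d^l$ introduces the first $b$-factor; and iterating
\[
\{a, b^{k-1} d^{l+1}\} = n(k-1)\,b^{k-2} d^{l+1} + (l+1)\,b^k d^l
\]
yields all $b^k d^l$ by induction on $k$, in particular $b^{i+1} d$.

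I expect this last step to be the main obstacle, since $b^k d$ is a $(k+1)$-fold product of elements of $\mathcal{A}$ and therefore lies beyond the $m \leq 3$ scope of Proposition \ref{proposition: monomial-abcde} at this stage of the paper; the explicit bootstrap from $a$, the powers $d^l$, and iterated brackets is indispensable. Once these three ingredients are in $\mathrm{Lie}(\mathcal{F})$, solving the displayed bracket for $(\tr X^{i+j}) b^i d$ concludes the proof for all $i \ge 0$ and $j \in \{3,4\}$.
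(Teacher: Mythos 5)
Your argument is correct, but it follows a genuinely different route from the paper's. The paper proceeds by induction on $i$, using the bracket $\{(\tr X^{i+j})e,\, b^i d\} = (i+2)(\tr X^{i+j})b^i d + i(i+j)(\tr X^{i-1+j})b^{i-1}de + (i+j)(\tr X^{i-1+j}Y)b^i e$: the middle term is absorbed by the induction hypothesis combined with Lemma \ref{lemma: hep-hweightnon0} (this is where $j-2\ge 1$ is used), the last term by Lemma \ref{lemma: thirdsummand}, and the ingredient $b^i d$ is produced in one stroke as $\{\tr XY^2, b^{i+1}\}= 2(i+1)\,b^i d$ with $\tr XY^2\in\mathrm{Lie}(\mathcal{F})$ by Corollary \ref{lemma: 1factor}. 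You avoid the induction on $i$ entirely: the single bracket $\{\tr X^{i+j+1}, b^{i+1}d\}$ extracts $(\tr X^{i+j})b^i d$ directly, at the price of disposing of $(\tr X^{i+j}Y)b^{i+1}$ by a second bracket (two applications of Lemma \ref{lemma: thirdsummand}, using the weight conditions $j-1\neq 0$ and $j-2\neq 0$) and of building $b^{i+1}d$ by your own bootstrap; all three computations check out, and the hypothesis $j\in\{3,4\}$ enters through essentially the same weight conditions as in the paper. Two small remarks: your auxiliary induction for $b^k d^l$ is correct but longer than necessary, since the paper's one-liner $\{\tr XY^2, b^{i+2}\}$ yields $b^{i+1}d$ at once; and when $i=0$ you invoke Lemma \ref{lemma: thirdsummand} with $b$-exponent $0$, which its statement (exponents in $\mathbb{N}$) does not literally cover --- but the two-factor products $(\tr X^{j}Y)e$ and $(\tr X^{j-1}Y)e$ are then already in $\mathrm{Lie}(\mathcal{F})$ by Equation \eqref{2factors} (or by the unchanged proof of that lemma), so this is harmless. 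On balance, you trade the paper's induction on $i$ and its use of Lemma \ref{lemma: hep-hweightnon0} for a more direct extraction with one extra bracket; both are valid, and yours is arguably the more transparent decomposition.
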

\begin{proof}
    We induct on $i$. In the case $i=0$, $(\tr X^j) d \in \mathrm{Lie}(\mathcal{F})$ by Equation \eqref{2factors}. For the induction step, use the bracket
    \begin{align*}
        & \{ (\tr X^{i+j}) e, b^i d \} \\ =& (i+2) (\tr X^{i+j}) b^i d + i(i+j) (\tr X^{i-1+j}) b^{i-1}de \\
        &+ (i+j) (\tr X^{i-1+j}Y) b^i e
    \end{align*}
    where $(\tr X^{i+j}) e \in \mathrm{Lie}(\mathcal{F})$ since $\tr X^{i+j}$ has nonzero weight. Notice that $b^i d$ can be generated as
    $$ \{ \tr XY^2, b^{i+1} \} = (i+1) b^i d.$$ 
    By the induction hypothesis, $(\tr X^{i-1+j}) b^{i-1}d \in \mathrm{Lie}(\mathcal{F})$, which by Lemma \ref{lemma: productweightsum} carries weight $i-1+j-(i-1)-2 = j-2 \ge 1$. Hence also $(\tr X^{i-1+j}) b^{i-1}d e \in \mathrm{Lie}(\mathcal{F})$ by Lemma \ref{lemma: hep-hweightnon0}. The third summand is in $\mathrm{Lie}(\mathcal{F})$ by Lemma \ref{lemma: thirdsummand}. Therefore the first summand $(\tr X^{i+j}) b^i d \in \mathrm{Lie}(\mathcal{F})$, which concludes the proof. 
\end{proof}
Since $(\tr X^{i+3}) b^i d$ is of nonzero weight, by Lemma \ref{lemma: hep-hweightnon0} we have 
\begin{corollary}\label{corollary: trXi+3bidej}
    $(\tr X^{i+3}) b^i d e^j \in \mathrm{Lie}(\mathcal{F})$ for any $i, j \ge 0$. 
\end{corollary}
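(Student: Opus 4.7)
The plan is to obtain this corollary as an immediate application of Lemma~\ref{lemma: hep-hweightnon0} to the invariant function produced by Lemma~\ref{lemma: trXi+mbid}. The only piece of bookkeeping needed is a weight computation.

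First I would note that by Lemma~\ref{lemma: trXi+mbid} (in the case $j=3$) the function
\[
h := (\tr X^{i+3}) \, b^i \, d
\]
already lies in $\mathrm{Lie}(\mathcal{F})$ for every $i \ge 0$. Next, I would compute its weight with respect to $e = \tr XY$ using Lemma~\ref{lemma: productweightsum}: the weights of $\tr X^{i+3}$, $b = \tr Y$, and $d = \tfrac{1}{2}\tr Y^2$ are $i+3$, $-1$, and $-2$ respectively, so
\[
w_h = (i+3) + i \cdot (-1) + (-2) = 1 \neq 0.
\]

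With $h \in \mathrm{Lie}(\mathcal{F})$ and $w_h \neq 0$ in hand, Lemma~\ref{lemma: hep-hweightnon0} applied with $k=1$ and $p=j$ directly yields $h \, e^j = (\tr X^{i+3}) \, b^i \, d \, e^j \in \mathrm{Lie}(\mathcal{F})$ for every $j \ge 0$, which is exactly the claim.

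There is essentially no obstacle here: the corollary is purely a packaging of the preceding lemma together with the ``multiplication by $e$'' device of Lemma~\ref{lemma: hep-hweightnon0}, and the only computation is the weight check above. The substantive work was already carried out in the inductive proof of Lemma~\ref{lemma: trXi+mbid}.
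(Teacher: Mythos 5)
Your proposal is correct and follows exactly the paper's argument: the paper also deduces the corollary by noting that $(\tr X^{i+3})\, b^i d \in \mathrm{Lie}(\mathcal{F})$ by Lemma~\ref{lemma: trXi+mbid} has nonzero weight (indeed weight $(i+3)-i-2=1$) and then invoking Lemma~\ref{lemma: hep-hweightnon0}. Your explicit weight bookkeeping via Lemma~\ref{lemma: productweightsum} is just a slightly more detailed write-up of the same one-line deduction.
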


Now we are ready to show that $e^{p} \in \mathrm{Lie}(\mathcal{F})$ for $p \ge 4$. 
\begin{lemma}\label{lemma: ep}
    $e^p \in \mathrm{Lie}(\mathcal{F})$ for any $p \ge 0$.
\end{lemma}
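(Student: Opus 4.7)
The plan is to induct on $p$; the base cases $p\leq 3$ are already treated in the preceding subsections. For $p\geq 4$, I would run a short cascade of three reductions, each of which isolates the target term up to something controlled by an earlier step.

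The first identity
$$\{c\,e^{p-1},\,d\} \;=\; e^p + 2(p-1)\,c\,d\,e^{p-2},$$
combined with $c\,e^{p-1}\in\mathrm{Lie}(\mathcal{F})$ from Corollary \ref{corollary: ckep}, reduces the problem to showing $c\,d\,e^{p-2}\in\mathrm{Lie}(\mathcal{F})$. This in turn is reduced via
$$\{(\tr X^3)\,d\,e^{p-2},\,\tr Y\} \;=\; 6\,c\,d\,e^{p-2} + (p-2)\,(\tr X^3)\,b\,d\,e^{p-3},$$
where $(\tr X^3)\,d\,e^{p-2}\in\mathrm{Lie}(\mathcal{F})$ by Corollary \ref{corollary: trXi+3bidej}, to establishing $(\tr X^3)\,b\,d\,e^{l}\in\mathrm{Lie}(\mathcal{F})$ for every $l\geq 0$.

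The most delicate step is the stronger auxiliary claim
$$(\tr X^{k+3})\,b^{k+1}\,d\,e^{l}\in\mathrm{Lie}(\mathcal{F}) \qquad \text{for all } k,l\geq 0,$$
which I would prove by induction on $l$. The base $l=0$ comes from $\{(\tr X^{k+4})\,b^{k+1}\,d,\,\tr Y\}=(k+4)(\tr X^{k+3})\,b^{k+1}\,d$ with $(\tr X^{k+4})\,b^{k+1}\,d$ furnished by Lemma \ref{lemma: trXi+mbid}. The inductive step uses
$$\{(\tr X^{k+4})\,b^{k+1}\,d\,e^{l+1},\,\tr Y\} \;=\; (k+4)(\tr X^{k+3})\,b^{k+1}\,d\,e^{l+1} + (l+1)(\tr X^{k+4})\,b^{k+2}\,d\,e^{l},$$
whose left-hand side lies in $\mathrm{Lie}(\mathcal{F})$ by Lemma \ref{lemma: hep-hweightnon0} (the element $(\tr X^{k+4})\,b^{k+1}\,d$ has weight $1$), and whose last right-hand summand is $(\tr X^{(k+1)+3})\,b^{(k+1)+1}\,d\,e^{l}$, covered by the inductive hypothesis applied with $k$ replaced by $k+1$.

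The essential obstacle, already alluded to in the remark preceding the lemma, is this: every natural Poisson bracket internal to the $\mathfrak{sl}_2$-style subalgebra generated by $c$, $d$ and $e$ returns $e^p$ coupled to $c\,d\,e^{p-2}$ in exactly the same proportion, so the two cannot be separated within $\mathbb{C}[c,d,e]$. Bringing in $\tr X^3$ and with it the chain of weight-$1$ objects $(\tr X^{k+3})\,b^{k+1}\,d$ is precisely what provides the missing independent relation; the subtle point is to formulate the auxiliary statement with the right combination of shifted indices so that the cascade actually closes under Lemma \ref{lemma: trXi+mbid}, Lemma \ref{lemma: hep-hweightnon0} and Corollary \ref{corollary: trXi+3bidej}.
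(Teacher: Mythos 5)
Your cascade is correct and is essentially the paper's own argument: both proofs reduce $e^p$ to $c\,d\,e^{p-2}$ via the bracket $\{c\,e^{p-1},d\}=e^p+2(p-1)\,c\,d\,e^{p-2}$ and obtain the latter from the auxiliary family $(\tr X^{s+2})\,b^{s}\,d\,e^{l}$, manufactured from Lemma \ref{lemma: trXi+mbid}, Corollary \ref{corollary: trXi+3bidej} and Lemma \ref{lemma: hep-hweightnon0} by bracketing with $b$. The only difference is bookkeeping --- the paper runs, for each fixed $p$, a single chain trading the $X$-exponent for the $e$-exponent, whereas you prove the two-parameter family by induction on the $e$-power uniformly in the $b$-power --- and all your brackets and weight counts check out.
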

\begin{proof}
    Since $e, e^2, e^3 \in \mathrm{Lie}(\mathcal{F})$, let $p \ge 4$.
    By Lemma \ref{lemma: trXi+mbid}, $(\tr X^{p+2}) b^{p-2} d \linebreak \in \mathrm{Lie}(\mathcal{F})$. Use
    \begin{align*}
        \{ (\tr X^{p+2}) b^{p-2} d, b^2 \} = 2 (p+2) (\tr X^{p+1}) b^{p-1} d 
    \end{align*}
    which gives $\tr(X^{p+1}) b^{p-1} d \in \mathrm{Lie}(\mathcal{F})$. Take it as the induction base $i = 0$ for the claim that 
    $$(\tr X^{p-i+1}) b^{p-i-1} d e^{i} \in \mathrm{Lie}(\mathcal{F})$$
     for $i = 0, 1, \cdots, p-1$. Suppose that $(\tr X^{p-i+2}) b^{p-i} d e^{i-1} \in \mathrm{Lie}(\mathcal{F})$.
    
    Since $\tr(X^{p-i+2}) b^{p-i-1} d e^{i} \in \lie(\mathcal{F})$ by Corollary \ref{corollary: trXi+3bidej}, the bracket
    \begin{align*}
        &\{ (\tr X^{p-i+2}) b^{p-i-1} d e^{i}, b \} \\ 
        =& \, (\tr X^{p-i+2}) b^{p-i-1} d \{ e^i, b\} + b^{p-i-1} d e^{i} \{ \tr X^{p-i+2}, b \}  \\
        =& \, i\, (\tr X^{p-i+2}) b^{p-i} d e^{i-1} + (p-i+2)\,  (\tr X^{p-i+1}) b^{p-i-1} d e^{i} 
    \end{align*}
    together with the induction hypothesis, yields $(\tr X^{p-i+1}) b^{p-i-1} d e^{i} \in \mathrm{Lie}(\mathcal{F})$. With $i = p-1$ we have $(\tr X^2) d e^{p-1} \in \mathrm{Lie}(\mathcal{F})$,
    that is, $c d e^{p-1} \in \mathrm{Lie}(\mathcal{F})$. Finally, since $ce^p \in \mathrm{Lie}(\mathcal{F})$ by Corollary \ref{corollary: ckep}, using $\{ c e^p, d \} = 2p \, cd e^{p-1} + e^{p+1}$ completes the proof. 
\end{proof}

Now we have an improved version of Corollary \ref{corollary: ckep}:

\begin{lemma}\label{lemma: ckep-general}
    $c^k e^p, d^l e^p \in \mathrm{Lie}(\mathcal{F})$ for any $k,l, p \ge 0$.
\end{lemma}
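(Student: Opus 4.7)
The plan is to combine two results already established earlier in this section. For $k \ge 1$ and $p \ge 0$, the containment $c^k e^p \in \mathrm{Lie}(\mathcal{F})$ is precisely the content of Corollary \ref{corollary: ckep}, which was obtained by applying Lemma \ref{lemma: hep-hweightnon0} to $h = c$ (whose weight $w_c = 2$ is nonzero). The only case missing from Corollary \ref{corollary: ckep} is $k = 0$, in which $c^k e^p$ reduces to $e^p$, and this is exactly Lemma \ref{lemma: ep}. Splitting into these two cases therefore handles all $k, p \ge 0$.

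For $d^l e^p$ the argument is entirely symmetric: the element $d$ has weight $w_d = -2 \neq 0$, so Lemma \ref{lemma: hep-hweightnon0} gives $d^l e^p \in \mathrm{Lie}(\mathcal{F})$ for every $l \ge 1$ and $p \ge 0$, while the residual case $l = 0$ is again covered by Lemma \ref{lemma: ep}.

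I do not anticipate any real obstacle, since the statement is a direct repackaging of the two immediately preceding results. The point of recording this improved version is simply to eliminate the case distinction $k \ge 1$ versus $k = 0$ (and similarly for $l$), so that the bracket computations to come can treat monomials of the form $c^k e^p$ and $d^l e^p$ uniformly for all nonnegative $k$, $l$ and $p$ without having to separate off the possibility that the $c$- or $d$-factor is absent.
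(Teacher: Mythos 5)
Your proposal is correct and matches what the paper intends: the lemma is stated there without a written proof precisely because it follows by combining Corollary \ref{corollary: ckep} (the cases $k,l \ge 1$, via Lemma \ref{lemma: hep-hweightnon0} applied to $c$ and $d$, which have nonzero weight) with Lemma \ref{lemma: ep} (the case $k=0$ resp.\ $l=0$, i.e.\ $e^p$ alone). Your case split is exactly this intended argument, so there is nothing to add.
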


Here we continue the task to generate general monomials. 
\begin{lemma}\label{lemma: abc}
    $a^i b^j c^k \in \mathrm{Lie}(\mathcal{F})$ for any $i, j, k \ge 0$.
\end{lemma}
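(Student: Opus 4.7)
The plan is to prove the lemma by induction on $k$. The base case $k = 0$ reduces to showing $a^i b^j \in \mathrm{Lie}(\mathcal{F})$ for all $i, j \geq 0$, which is immediate from Equation \eqref{aibj}, i.e.\ $\{a^{i+1}, b^{j+1}\} = n(i+1)(j+1)\, a^i b^j$, combined with $a^{i+1}, b^{j+1} \in \mathrm{Lie}(\mathcal{F})$ from Lemma \ref{lemma: hep-hweightnon0}.

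For the inductive step I would assume $a^i b^j c^{k-1} \in \mathrm{Lie}(\mathcal{F})$ for all $i, j \geq 0$ and establish the claim for $c^k$. As a preliminary sub-step I would first produce $a^i c^k \in \mathrm{Lie}(\mathcal{F})$ for every $i \geq 0$: the case $i = 0$ is already contained in Lemma \ref{lemma: hep-hweightnon0}, and for $i \geq 1$ a short computation using $\{a, c\} = 0$ and $\{b, c\} = -a$ yields
\[
\{a^{i-1} b,\, c^{k+1}\} \;=\; -(k+1)\, a^i c^k,
\]
so $a^i c^k \in \mathrm{Lie}(\mathcal{F})$ follows from the base case together with $c^{k+1} \in \mathrm{Lie}(\mathcal{F})$.

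The main computation is the bracket
\[
\{a^{i+1} c^k,\, b^{j+1}\} \;=\; n(i+1)(j+1)\, a^i b^j c^k \;+\; k(j+1)\, a^{i+2} b^j c^{k-1},
\]
obtained via the Leibniz rule from $\{a, b\} = n$ and $\{c, b\} = a$. The second summand lies in $\mathrm{Lie}(\mathcal{F})$ by the induction hypothesis on $k$, and the left-hand side lies in $\mathrm{Lie}(\mathcal{F})$ by the preliminary sub-step together with $b^{j+1} \in \mathrm{Lie}(\mathcal{F})$, so solving for $a^i b^j c^k$ (whose coefficient $n(i+1)(j+1)$ never vanishes) completes the induction.

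The main conceptual obstacle is that $a$ and $c$ Poisson-commute, as both depend only on $X$, so brackets built exclusively from $a$'s and $c$'s cannot escape the linear span one already has. The resolution is to use $b$ as a catalyst: the identity $\{b, c\} = -a$ allows one to exchange a factor of $b$ for a factor of $a$ at the price of a $c$, and this is precisely the mechanism driving both auxiliary brackets above. Once one sees this coupling, the rest is bookkeeping.
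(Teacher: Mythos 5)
Your proof is correct and takes essentially the same route as the paper: induction on the exponent of $c$, first securing the monomials $a^i c^k$ and then bracketing them against a power of $b$ so that the unwanted summand is absorbed by the induction hypothesis (the paper's bracket $\{a^i c^{k+1}, b^j\} = nij\,a^{i-1}b^{j-1}c^{k+1} + (k+1)j\,a^{i+1}b^{j-1}c^k$ is your main identity up to reindexing). The only, harmless, difference is the auxiliary step: you generate $a^i c^k$ from $\{a^{i-1}b, c^{k+1}\} = -(k+1)\,a^i c^k$, while the paper uses $\{a^i, c^{k+1}e\} = i\,a^i c^{k+1}$ via Corollary \ref{corollary: ckep}; both are valid.
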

\begin{proof}
    From Equation \eqref{aibj} we have the case $k=0$ as the induction base. The induction step follows from the bracket
    \begin{align*} 
        \{ a^i c^{k+1}, b^j \} = n i j \, a^{i-1} b^{j-1} c^{k+1} +  (k+1) j \, a^{i+1} b^{j-1} c^{k}. 
    \end{align*}
    We remark that $a^i c^{k+1}$ can be generated as $\{ a^i, c^{k+1} e \} = i a^i c^{k+1}$.
\end{proof}

\begin{lemma}\label{lemma: abce}
     $a^i b^j c^k e^p \in \mathrm{Lie}(\mathcal{F})$ for any $i, j, k, p \ge 0$.
\end{lemma}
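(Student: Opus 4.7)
The plan is to induct on $p$. The case $p = 0$ is exactly Lemma \ref{lemma: abc}, so assume the statement holds for every $p' < p$ and for all nonnegative $i, j, k$. Write $w = i - j + 2k$ for the weight of $a^i b^j c^k$. Whenever $w \neq 0$, applying Lemma \ref{lemma: hep-hweightnon0} to $h = a^i b^j c^k$ (which is in $\mathrm{Lie}(\mathcal{F})$ by Lemma \ref{lemma: abc} and has nonzero weight $w$) yields $a^i b^j c^k e^p \in \mathrm{Lie}(\mathcal{F})$ at once.

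The genuine obstacle is the zero-weight case $j = i + 2k$, because then bracketing with $e$ or any of its powers annihilates $a^i b^j c^k$ and cannot be used to insert a factor $e$. To circumvent this I would exploit the identity
\[
\{b^{j+1},\, a^{i+1} c^k e^p\} = -n(j+1)(i+1)\, a^i b^j c^k e^p - (j+1)k\, a^{i+2} b^j c^{k-1} e^p - p(j+1)\, a^{i+1} b^{j+1} c^k e^{p-1},
\]
which is a straightforward Leibniz-rule computation using $\{b,a\} = -n$, $\{b,c\} = -a$ and $\{b,e\} = -b$. The left-hand side belongs to $\mathrm{Lie}(\mathcal{F})$: the factor $b^{j+1}$ does by the remark preceding Equation \eqref{aibj}, and $a^{i+1} c^k e^p$ does because $a^{i+1} c^k \in \mathrm{Lie}(\mathcal{F})$ has nonzero weight $i + 1 + 2k$, to which the nonzero-weight case already treated applies.

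Solving the identity for $a^i b^j c^k e^p$ expresses it, modulo $\mathrm{Lie}(\mathcal{F})$, as a combination of $a^{i+2} b^j c^{k-1} e^p$ (still of weight $0$, but $c$-exponent $k-1$) and $a^{i+1} b^{j+1} c^k e^{p-1}$ (still of weight $0$, but $e$-exponent $p-1$). The main subtlety is arranging the induction so that both auxiliary terms are already available: I would run a secondary induction on $k$ nested inside the primary induction on $p$, so that the first auxiliary term is covered by the secondary hypothesis and the second by the primary one. The base $k = 0$ of the secondary induction specialises to
\[
\{b^{i+1},\, a^{i+1} e^p\} = -n(i+1)^2\, a^i b^i e^p - p(i+1)\, a^{i+1} b^{i+1} e^{p-1},
\]
and the remaining term $a^{i+1} b^{i+1} e^{p-1}$, although again of zero weight, has $e$-exponent $p-1$ and is thus supplied by the primary induction hypothesis. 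This closes the loop and establishes $a^i b^j c^k e^p \in \mathrm{Lie}(\mathcal{F})$ in all cases.
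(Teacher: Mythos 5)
Your proof is correct, and it takes a genuinely different route from the paper's. The paper also inducts on $p$ with Lemma \ref{lemma: abc} as the base, but its induction step rests on the single bracket $\{a^i b^j, c^k e^p\} = (i-j)p\, a^i b^j c^k e^{p-1} - jk\, a^{i+1}b^{j-1}c^{k-1}e^p$, which needs $c^k e^p \in \mathrm{Lie}(\mathcal{F})$ as an input, i.e.\ Lemma \ref{lemma: ckep-general}, whose case $k=0$ is precisely the hard Lemma \ref{lemma: ep}; the remaining case $i=0$ is then dispatched separately in one line. You instead split by weight: the nonzero-weight case follows at once from Lemma \ref{lemma: hep-hweightnon0} applied to $a^i b^j c^k$ (in $\mathrm{Lie}(\mathcal{F})$ by Lemma \ref{lemma: abc}), and the zero-weight case is handled by the bracket $\{b^{j+1}, a^{i+1}c^k e^p\}$, with a secondary induction on $k$ nested inside the induction on $p$: the $c^{k-1}$-term is absorbed by the inner hypothesis, the $e^{p-1}$-term by the outer one, and the target's coefficient $-n(j+1)(i+1)$ is nonzero, so the argument closes; I checked the bracket identity and the weight bookkeeping, and both are right. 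The only inputs you use are $b^{j+1}\in \mathrm{Lie}(\mathcal{F})$ and $a^{i+1}c^k e^p\in \mathrm{Lie}(\mathcal{F})$, the latter via Lemma \ref{lemma: abc} and Lemma \ref{lemma: hep-hweightnon0}, so the only external power of $e$ you need is $e^2$. In particular your proof never invokes Lemma \ref{lemma: ckep-general} or Lemma \ref{lemma: ep}; in fact the special case $i=j=k=0$ of your zero-weight branch, $\{b, a e^p\} = -n\,e^p - p\,a b e^{p-1}$, re-derives $e^p \in \mathrm{Lie}(\mathcal{F})$ by a much shorter path than the paper's chain of Lemmas \ref{lemma: thirdsummand}--\ref{lemma: ep}. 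What the paper's version buys is brevity at that point of the text, since $c^k e^p$ is already available there; what yours buys is independence from the heaviest lemmas of the section and a more transparent treatment of the case $i=0$, at the modest cost of a nested induction.
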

\begin{proof}
    We induct on $p$. The case $p=0$ is covered by Lemma \ref{lemma: abc}. For the induction step, consider
    \begin{align*} 
        \{ a^i b^j , c^k e^p \} &= \{ a^i, e^p \} b^j c^k + \{ b^j, e^p \} a^i c^k + \{ b^j, c^k \} a^i e^p \\
        &= (i-j) p \, a^i b^j c^k e^{p-1} -  j k \, a^{i+1} b^{j-1} c^{k-1} e^p
    \end{align*}
    which covers $a^i b^j c^k e^p$ with $i \ge 1$. Here Equation \eqref{aibj} and Lemma \ref{lemma: ckep-general} are used to make sure that $a^i b^j, c^k e^p \in \mathrm{Lie}(\mathcal{F})$. To obtain $b^j c^k e^p \in \mathrm{Lie}(\mathcal{F})$, take
    \begin{align*}
        \{ b^j, c^k e^p \} = - jk \, a b^{j-1} c^{k-1} e^p -  jp \, b^j c^k e^{p-1}
    \end{align*}
    where the first summand is already in $\mathrm{Lie}(\mathcal{F})$.
\end{proof}

Finally we generate all monomials in $a,b,c,d,e$.
\begin{lemma}
    $a^i b^j c^k d^l e^p \in \mathrm{Lie}(\mathcal{F})$ for any $i,j,k,l,p \ge 0$. 
\end{lemma}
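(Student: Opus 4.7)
My plan is to induct on $l$, the exponent of $d$. The base case $l = 0$ is exactly the content of Lemma \ref{lemma: abce}, so the question is only how to raise the exponent of $d$ by one.

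For the inductive step, I would bracket the known element $a^i b^j c^k d^l e^{p+1} \in \mathrm{Lie}(\mathcal{F})$ (supplied by the inductive hypothesis, or by the base case when $l = 0$) against the generator $d \in \mathrm{Lie}(\mathcal{F})$. By the Leibniz rule together with the elementary brackets $\{a,d\} = b$, $\{b,d\} = 0$, $\{c,d\} = e$, and $\{e,d\} = 2d$, a direct expansion yields
\[
\{a^i b^j c^k d^l e^{p+1}, d\} = i\, a^{i-1} b^{j+1} c^k d^l e^{p+1} + k\, a^i b^j c^{k-1} d^l e^{p+2} + 2(p+1)\, a^i b^j c^k d^{l+1} e^p.
\]
The first two summands on the right still carry $d$-exponent equal to $l$, so they lie in $\mathrm{Lie}(\mathcal{F})$ by the inductive hypothesis. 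Since the entire left-hand side also lies in $\mathrm{Lie}(\mathcal{F})$, the third summand does as well; and the coefficient $2(p+1)$ is nonzero for every $p \geq 0$, so we may solve for $a^i b^j c^k d^{l+1} e^p$ and place it in $\mathrm{Lie}(\mathcal{F})$ for all $i,j,k,p \geq 0$.

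I do not anticipate a serious obstacle. The only generator of $\mathcal{A}$ whose bracket with $d$ raises the exponent of $d$ is $e$, via $\{e,d\} = 2d$; all other contributions to the Leibniz expansion either vanish or preserve the exponent of $d$ and are therefore controlled by the inductive hypothesis. Consequently the induction closes cleanly and the lemma follows, completing the chain of generation that delivers every monomial in $a,b,c,d,e$.
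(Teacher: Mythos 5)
Your proof is correct and follows essentially the same route as the paper: an induction on the exponent of $d$ in which a single Leibniz expansion (the paper brackets $a^i b^j c^k e^p$ against $d^l$, you bracket the full monomial against the generator $d$) produces the desired term via $\{e,d\}=2d$ with nonzero coefficient, while all remaining summands keep the $d$-exponent at $l$ and are absorbed by the inductive hypothesis. The two variants are interchangeable, so no further comment is needed.
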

\begin{proof}
    We induct on $l$. Lemma \ref{lemma: abce} gives the base case $l = 0$. The induction step follows from
    \begin{align*}
        &\{ a^i b^j c^k e^p, d^l \} \\ =& \{ a^i, d^l \} b^j c^k e^p + \{ c^k , d^l \} a^i b^j e^p + \{ e^p, d^l \} a^i b^j c^k \\
        =& \, i l \, a^{i-1} b^{j+1} c^k d^{l-1} e^p + k l \, a^i b^j c^{k-1} d^{l-1} e^{p+1} + 2 p l \, a^i b^j c^k d^l e^{p-1}.\qedhere
    \end{align*}
\end{proof}

This finishes the proof of Proposition \ref{proposition: monomial-abcde}. We conclude this section with a remark that for $n=2$ the ring of invariant functions needs only the five generators $a, b, c, d, e$; While for $n > 2$ more generators show up and there the reduction using the rank condition on $\widetilde{\cont_n}$ as in Corollary \ref{corollary: MagicOnCM} plays a crucial role.

\section{Proof of the Hamiltonian Density Property for $n > 2$}

\begin{theorem} \label{theorem: mfactors}
    On $\widetilde{\mathcal{C}_n}$, the Lie-closure $\mathrm{Lie}(\mathcal{F})$ of
    \begin{align*}
        \mathcal{F} = \{  \tr Y, \tr Y^2, \tr X^3, (\tr X)^2 \} 
    \end{align*}
    contains all products of the form
    \begin{align*}
        h_1 h_2 \cdots h_m
    \end{align*}
    where $m \in \mathbb{N}_0, h_k \in \mathcal{B} = \{  \tr X^p Y^q : p,q \in \mathbb{N}_0 \}, k = 1, 2, \dots, m$. 
\end{theorem}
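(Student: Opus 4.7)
The plan is to run a double induction, mirroring and extending the $n=2$ argument of Proposition \ref{proposition: monomial-abcde}: outer induction on the total degree $D = \sum_{k=1}^m (p_k + q_k)$ of the target product $\mathcal{H} = h_1 \cdots h_m$ with $h_k = \tr X^{p_k} Y^{q_k}$, and inner induction on the number of factors $m$. The base cases are provided by Corollary \ref{lemma: 1factor} (which gives $m = 1$ in full generality) and by Proposition \ref{proposition: monomial-abcde} together with the remark following its statement (every product whose factors lie in $\{a, b, c, d, e\}$, i.e.\ have individual degree $\leq 2$, is already in $\mathrm{Lie}(\mathcal{F})$, for any $n$).

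For the inductive step I would distinguish the nonzero-weight case from the zero-weight case using the weight $W = \sum_k(p_k - q_k)$. When some factor has nonzero weight, WLOG $p_1 > 0$, I would compute a bracket of the shape $\{\tr X, h_1' \cdot h_2 \cdots h_m\}$ with $h_1' = \tr X^{p_1+1} Y^{q_1}$ (whose degree-$(D+1)$ ``companion'' product $h_1' h_2 \cdots h_m$ sits at a higher degree and would itself be handled in a coordinated sweep), or more symmetrically brackets of the form $\{\tr X^{p_1+1} Y^{q_1}, \,b\cdot h_2\cdots h_m\}$ expanded via Lemma \ref{lemma: commutator-general}. These produce the target $\mathcal{H}$ with a nonzero integer coefficient, together with error terms of three kinds: (i) products of total degree $< D$; (ii) products of total degree $D$ but fewer than $m$ factors, where one $h_k$ has been replaced by a function obtained by Poisson-differentiating; and (iii) interleaved traces $\tr X^{a_1} Y^{b_1} X^{a_2} Y^{b_2} \cdots$ that are not directly in $\mathcal{B}$. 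Terms (i) and (ii) are covered by the inductive hypothesis. The zero-weight case is handled separately by adapting the trick used in Lemma \ref{lemma: ep}: multiply the product $\mathcal{H}$ into a larger nonzero-weight object via Lemmas \ref{lemma: trXi+mbid}-\ref{lemma: thirdsummand} and the weight machinery of Lemma \ref{lemma: hep-hweightnon0}, reduce to the nonzero-weight case just handled, then peel off extraneous factors with a bracket against $d$ or $b$ analogously to the final step of Lemma \ref{lemma: ep}.

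The main obstacle is the type-(iii) error terms, and it is exactly here that the restriction to $\widetilde{\mathcal{C}_n}$ and the rank-one reduction of Corollary \ref{corollary: MagicOnCM} become indispensable. On $\widetilde{\mathcal{C}_n}$, any trace $\tr X^{a_1}Y^{b_1} \cdots X^{a_r}Y^{b_r}$ of total degree $D$ equals $\tr X^a Y^b$ (with $a = \sum a_i,\ b = \sum b_i$) modulo a polynomial expression in trace monomials of total degree at most $D - 4$. The leading single-factor trace $\tr X^a Y^b$ lies in $\mathcal{B}$ and hence in $\mathrm{Lie}(\mathcal{F})$ by Corollary \ref{lemma: 1factor}, while the correction terms are products of traces of total degree $\leq D-4$ and are therefore in $\mathrm{Lie}(\mathcal{F})$ by the outer inductive hypothesis. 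This absorbs all type-(iii) errors and lets us solve for $\mathcal{H}$ in $\mathrm{Lie}(\mathcal{F})$. The bookkeeping for the inner induction on $m$ requires some care (one has to choose the bracketed partner so that the coefficient of $\mathcal{H}$ is genuinely nonzero and not cancelled by symmetric contributions), but once the outer ``degree drops by at least $4$'' from the magical reduction is in place, the argument collapses into exactly the style of manipulations already executed for $n = 2$.
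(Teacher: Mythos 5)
Your skeleton (outer induction on total degree, Lemma \ref{lemma: commutator-general} for the brackets, and Corollary \ref{corollary: MagicOnCM} to convert interleaved traces into $\tr X^aY^b$ plus terms of degree dropped by $4$, absorbed by the degree induction) is exactly the paper's mechanism, and your handling of the type-(iii) errors is correct. The gap is in your classification of the remaining error terms and in the inner induction. Take your own bracket $\{\tr X^{p_1+1}Y^{q_1},\, b\, h_2\cdots h_m\}$: the Leibniz term hitting $b$ gives $(p_1+1)\mathcal{H}$ as desired, but each term hitting some $h_k$ is, modulo the degree-$(-4)$ corrections, a constant times
\begin{align*}
b \cdot \tr X^{p_1+p_k}Y^{q_1+q_k-1} \prod_{2\le j\ne k} h_j ,
\end{align*}
which has the \emph{same} total degree and the \emph{same} number of factors $m$ as the target (two factors merged, but $b=\tr Y\in\mathcal{B}$ is a new factor). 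These are neither of your type (i) nor type (ii), so neither the outer degree induction nor the inner induction on $m$ covers them; the proposal as stated cannot close. Your alternative bracket $\{\tr X,\,h_1'h_2\cdots h_m\}$ is worse: the companion product $h_1'h_2\cdots h_m$ has degree $D'+1$ and $m$ factors, so it is not available under your induction scheme, and the ``coordinated sweep'' is precisely the missing argument, not a proof.

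This is the point where the paper's proof does real work beyond the $n=2$ manipulations: it replaces your inner induction on $m$ by a staged build-up in which an auxiliary pure-power factor is carried along and only removed at the end. Concretely, Lemma \ref{lemma: trXYPRODtrX} first produces $\tr X^pY^q\prod_k\tr X^{i_k}$ (splitting into the cases ``some $i_k\le p+1$'' and ``all $i_k\ge p+2$'' to get a usable nonzero coefficient), Lemma \ref{lemma: trXiatrXpbYqb} then inducts on the number of mixed factors $\tr X^{p_b}Y^{q_b}$ \emph{together with a lexicographic induction on the exponent vector} $(q_1,\dots,q_k)$, which is exactly what disposes of the same-degree, same-factor-count terms above, and Lemma \ref{lemma: nfactors} finally strips off the auxiliary $\tr X^i$ by one more bracket with $\tr Y^j$. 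Your weight-based split into zero-weight and nonzero-weight cases is also not how the general-$n$ proof proceeds (the weight machinery is only used to shrink the generating set to $\mathcal{F}$), and as written it is circular anyway: you cannot ``multiply $\mathcal{H}$ into a larger nonzero-weight object'' by bracketing, since $\mathcal{H}$ is not yet known to lie in $\mathrm{Lie}(\mathcal{F})$. So the proposal identifies the right tools but is missing the refined induction that makes the Leibniz error terms accessible; as stated, the inductive step fails.
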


\begin{remark}
    As in Lemma \ref{lemma: hep-hweightnon0}, $\tr Y, \tr X^3, e^2 = (\tr XY)^2$ are sufficient to generate $(\tr X)^2$. Hence we could also take the generating set 
    \begin{align*}
        \mathcal{F}' = \{\tr Y, \tr Y^2, \tr X^3, (\tr XY)^2  \}
    \end{align*}
    However, the flow of $e^2$ is not algebraic but holomorphic, see Example \ref{example: CMHamVectFlow}.
\end{remark}

\begin{remark}
Etingof and Ginzburg proved that $\{ \tr Y^2, \tr X^k \;:\; k \geq 0 \}$ generates the ring of function on $\camo_n$ as a Poisson algebra \cite{MR1881922}*{Section 11, p.~321, Equation (11.33)}. Our Theorem \ref{theorem: mfactors} above improves this result in two ways:  First, we do not use the associative multiplication of the Poisson algebra but only the Lie algebra structure. Second, we only need four generators instead of a countable family.
\end{remark}

To prove Theorem \ref{theorem: mfactors}, we go by induction on the total degree $D$ of the product $h_1 h_2 \cdots h_m$. For $D = 0$, we have the constant-valued functions. Suppose that 
\begin{align}\label{indhyp}
    h_1 h_2 \cdots h_m \in \mathrm{Lie}(\mathcal{F}) \text{ up to degree } D.
\end{align}
We use the notation $$f \thicksim g$$ if $f$ and $g$ are of the same degree and are identical up to terms of degree $\deg f -4$. The missing terms are in $\mathrm{Lie}(\mathcal{F})$ if the degree is bounded by $D$, which implies that $\deg f = \deg g \le D+4$. The following example, though not necessary for the proof of Theorem \ref{theorem: mfactors}, illustrates the power of the reduction using the rank condition, which makes it possible to lift the condition on the indices in Equation \eqref{2factors}, with an extra condition on the degree.

\begin{lemma}
    Assume that the induction hypothesis \eqref{indhyp} holds, and let $i,j,k,l \in \mathbb{N}_0, i+j+k+l \le D+4$. Then $$ \tr X^i Y^j \tr X^k Y^l \in \mathrm{Lie}(\mathcal{F}).$$
\end{lemma}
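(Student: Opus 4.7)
My plan is to extract $\tr X^i Y^j \cdot \tr X^k Y^l$ from a single Poisson bracket computed by the Leibniz rule, and then use the $\sim$-reduction of Corollary~\ref{corollary: MagicOnCM} to argue that every leftover term either falls under the already-established Equation~\eqref{2factors} or has total degree $\leq D$ and so is controlled by the induction hypothesis~\eqref{indhyp}. The case in which one of $i,j,k,l$ equals $0$ is already covered by Equation~\eqref{2factors} (some index is then $\leq 2$), so I may assume $i,j \geq 1$.

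The key identity is the Leibniz expansion
\[
\bigl\{\tr X^{i+1},\; \tr X^k Y^l \cdot \tr Y^{j+1}\bigr\}
= \bigl\{\tr X^{i+1},\, \tr X^k Y^l\bigr\} \cdot \tr Y^{j+1}
+ \tr X^k Y^l \cdot \bigl\{\tr X^{i+1},\, \tr Y^{j+1}\bigr\}.
\]
The left-hand side lies in $\mathrm{Lie}(\mathcal{F})$: the first argument is in $\mathrm{Lie}(\mathcal{F})$ by Lemma~\ref{lemma: trXtrYuptoDegree3}, and the second argument, being a two-factor product with $X$-index $0$ in the second factor, is in $\mathrm{Lie}(\mathcal{F})$ by Equation~\eqref{2factors}. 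A direct application of Lemma~\ref{lemma: commutator-general} yields $\{\tr X^{i+1},\, \tr Y^{j+1}\} = (i+1)(j+1)\, \tr X^i Y^j$, so the second summand on the right-hand side is exactly $(i+1)(j+1)\, \tr X^i Y^j \cdot \tr X^k Y^l$---a nonzero multiple of the target.

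It remains to place the first summand $\bigl\{\tr X^{i+1},\, \tr X^k Y^l\bigr\} \cdot \tr Y^{j+1}$ inside $\mathrm{Lie}(\mathcal{F})$. By Lemma~\ref{lemma: commutator-general}, the inner bracket is a sum of $(i+1)l$ traces of monomials, each of double degree $(i+k,\, l-1)$. Corollary~\ref{corollary: MagicOnCM} then identifies each such trace on $\widetilde{\mathcal{C}_n}$ with $\tr X^{i+k} Y^{l-1}$ up to a polynomial in trace monomials of total degree $\leq (i+k+l-1) - 4 = i+k+l-5$. Multiplying through by $\tr Y^{j+1}$, the leading piece is a scalar multiple of $\tr X^{i+k} Y^{l-1} \cdot \tr Y^{j+1}$, which lies in $\mathrm{Lie}(\mathcal{F})$ by Equation~\eqref{2factors} (the second factor again has $X$-index $0$), while the correction terms are products of elements of $\mathcal{B}$ of total degree at most $(i+k+l-5)+(j+1) = i+j+k+l-4 \leq D$, and so they lie in $\mathrm{Lie}(\mathcal{F})$ by the induction hypothesis~\eqref{indhyp}.

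Combining these observations, $(i+1)(j+1)\, \tr X^i Y^j \cdot \tr X^k Y^l$ lies in $\mathrm{Lie}(\mathcal{F})$, and dividing by the nonzero coefficient gives the claim. The central point---and the reason the rank condition is indispensable here---is that the four degrees of slack built into the definition of $\sim$ precisely compensate for the extra degree contributed by the factor $\tr Y^{j+1}$, bringing every correction down to total degree $\leq D$ where the induction hypothesis takes over.
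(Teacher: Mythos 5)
Your proof is correct and follows essentially the same route as the paper: both arguments Leibniz-expand a Poisson bracket of elements already known to lie in $\mathrm{Lie}(\mathcal{F})$ (a pure trace power against a two-factor product supplied by Equation~\eqref{2factors}), read off the target from one summand, and dispose of the other via Lemma~\ref{lemma: commutator-general}, the rank-condition reduction of Corollary~\ref{corollary: MagicOnCM}, and the induction hypothesis~\eqref{indhyp}. The only difference is cosmetic: the paper brackets $\tr X^k Y \,\tr X^i Y^j$ with $\tr Y^l$ and extracts the target after reduction, whereas you bracket $\tr X^{i+1}$ with $\tr X^k Y^l\,\tr Y^{j+1}$ so that the target appears directly from the cross term $\{\tr X^{i+1},\tr Y^{j+1}\}$, with the same degree bookkeeping.
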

\begin{proof}
    From Equation \eqref{2factors}, $\tr X^k Y \tr X^i Y^j \in \mathrm{Lie}(\mathcal{F})$. Apply Lemma \ref{lemma: commutator-general} and Corollary \ref{corollary: MagicOnCM}
    \begin{align*}
        &\{ \tr X^k Y \tr X^i Y^j, \tr Y^l \} \\
        =& \,l \tr X^i Y^j \sum_{1 \le a \le k} \tr X^{a-1} Y^{l-1} X^{k-a} Y \\
         &+ l \tr X^k Y \sum_{1 \le b \le i} \tr X^{b-1} Y^{l-1}X^{i-b}Y^j \\
        =& \, kl \tr X^i Y^j \tr X^{k-1} Y^{l} + l \tr X^i Y^j \cdot ( \text{terms of degree} \le k+l-5 )  \\
        &+ il \tr X^k Y \tr X^{i-1}Y^{j+l-1} \\
        &+ l \tr X^k Y \cdot (\text{terms of degree} \le i+l+j-6) 
    \end{align*}
    The third term is in $\mathrm{Lie}(\mathcal{F})$, while the second and fourth terms are of degree $\le i+j+k+l-5$. In the above notation
    \[\begin{split}
        \{ \tr X^k Y \tr X^i Y^j, \tr Y^l \} \thicksim \\
        kl \tr X^i Y^j \tr X^{k-1} Y^{l} + il \tr X^k Y \tr X^{i-1}Y^{j+l-1} 
    \end{split}\]
    For $k,l \ge 1$, if $i+j+k+l-5 \le D$, then $\tr X^i Y^j \tr X^{k-1} Y^{l} \in \mathrm{Lie}(\mathcal{F})$. The case $l = 0$ can be seen by interchanging the roles of $ X$ and $ Y$. 
\end{proof}

The proof of Theorem \ref{theorem: mfactors} starts with the generating set $\mathcal{F}$, then Corollary \ref{lemma: 1factor} provides the first step for $\tr X^p Y^q$ and we continue with taking in factors of the form $\tr X^i$. 

\begin{lemma} \label{lemma: trXYPRODtrX}
    Assume that the induction hypothesis \eqref{indhyp} holds, and let $p, q, l \in \mathbb{N}_0, i_1, \dots, i_l  \in \mathbb{N}_0$ with $p+q+\sum i_k \le D+4$. Then 
    \begin{align*}
        \tr X^p Y^q  \prod_{k=1}^l \tr X^{i_k} \in \mathrm{Lie}(\mathcal{F}). 
    \end{align*}
\end{lemma}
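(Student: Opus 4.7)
The plan is a secondary induction on the total degree $d = p + q + \sum_k i_k$. For $d \le D$ the product is in $\mathrm{Lie}(\mathcal{F})$ by the outer induction hypothesis \eqref{indhyp}; we assume $D < d \le D+4$ and that all strictly smaller-degree products of $\mathcal{B}$-traces already lie in $\mathrm{Lie}(\mathcal{F})$.

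The first step is to reduce to the case $p = 0$. Lemma \ref{lemma: commutator-general} yields the exact identity
\[
\{\tr Y^{q+1}\prod_{k=1}^l \tr X^{i_k},\; \tr X^{p+1}\} \;=\; -(p+1)(q+1)\,\tr X^p Y^q \prod_{k=1}^l \tr X^{i_k},
\]
with $\tr X^{p+1}\in\mathcal{D}\subset\mathrm{Lie}(\mathcal{F})$ and the other operand an instance of the lemma with $p=0$ (and $q$ replaced by $q+1$) of degree $d-p+1\le d$; for $p\ge 2$ this instance has strictly smaller degree and is handled by the inner induction, while for $p = 1$ it is a $p=0$ instance of the same degree. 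For $p=0, q=0$ the product $\prod_k \tr X^{i_k}$ is pure in $X$: when all $i_k \in \{0,1\}$ it equals a scalar multiple of $(\tr X)^m$ and lies in $\mathrm{Lie}(\mathcal{F})$ by Lemma \ref{lemma: hep-hweightnon0}, and otherwise, choosing $a = i_j \ge 2$, the exact identity
\[
\tr X^a \prod_{k\ne j}\tr X^{i_k} \;=\; \tfrac{1}{a+1}\{\tr X^{a+1},\; \tr Y\prod_{k\ne j}\tr X^{i_k}\}
\]
reduces the problem to a $(p=0,q=1)$ instance of strictly smaller degree $d-a+1\le d-1$, handled by the inner induction.

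The hard part will be the case $p=0, q \ge 1$ at maximum degree, where the natural bracket with $\tr XY^2 \in \mathrm{Lie}(\mathcal{F})$ (which lies in $\mathrm{Lie}(\mathcal{F})$ by Corollary \ref{lemma: 1factor}) computes to
\[
\{\tr Y^{q-1}\prod_k\tr X^{i_k},\; \tr XY^2\} = -(q-1)\tr Y^q\prod_k\tr X^{i_k} + 2\sum_j i_j\, \tr Y^{q-1}\,\tr X^{i_j}Y\prod_{k\ne j}\tr X^{i_k},
\]
and the correction terms have the same total degree $d$ as the target. For $q = 1$ the factor $\tr Y^{q-1} = n$ is scalar and the coefficient $-(q-1) = 0$ makes this identity degenerate; the correct replacement is the bracket $\{\tr Y^2,\, \tr X\prod_k\tr X^{i_k}\}$, and the key observation that those correction terms with $i_j = 1$ coincide (by commutativity of the product) with the target itself, yielding a nonzero overall coefficient $1+\#\{j:i_j=1\}$ that one inverts to solve for the target; the remaining corrections with $i_j\ge 2$ have $p=i_j-1\ge 1$ and drop via Step 1 to a $(p=0,q=2)$ case of the same degree $d$, which then collapses to degree $d-1$ by the $q\ge 2$ identity above. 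For $q\ge 2$, the corrections genuinely involve two $Y$-containing traces and lie outside the form covered by the present lemma; their treatment requires a companion lemma in the larger inductive scheme of the theorem handling products with multiple $Y$-containing traces, proved in tandem with the present lemma by the same outer induction on $D$.
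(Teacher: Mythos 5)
Your preliminary reductions are fine: the exact brackets you use to remove the $X$-power from the mixed trace (reducing to $p=0$) and to handle the pure-$X$ products when $q=0$ are correct Leibniz/Lemma~\ref{lemma: commutator-general} computations. The genuine gap is the core case $p=0$, $q\ge 1$ at degree $d>D$, which your argument does not close. Bracketing with $\tr XY^2$ (or with $\tr Y^2$ when $q=1$) produces the corrections $\tr Y^{q-1}\,\tr X^{i_j}Y\,\prod_{k\neq j}\tr X^{i_k}$, which have the same total degree $d$ as the target and contain \emph{two} $Y$-carrying trace factors, hence lie outside the class covered by the lemma and outside the reach of \eqref{indhyp}. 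Your closing sentence concedes this and appeals to a companion lemma for products with several $Y$-containing traces ``proved in tandem''; but in the paper's inductive scheme that statement (Lemma~\ref{lemma: trXiatrXpbYqb}) is proved by induction on the number of mixed factors \emph{with the present lemma as its base case}, so the deferral as written is circular, and you have not set up any alternative simultaneous induction that would break the circle. Relatedly, your inner hypothesis ``all strictly smaller-degree products of $\mathcal{B}$-traces already lie in $\mathrm{Lie}(\mathcal{F})$'' is stronger than what is available: \eqref{indhyp} only gives degree $\le D$, and for degrees $D+1,\dots,D+3$ such general products are precisely what the chain of lemmas is still trying to establish.

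The missing idea is the rank-one reduction on $\widetilde{\camo_n}$. The paper inducts on the number $l$ of pure-$X$ factors and introduces the new factor by bracketing with $(\tr X^j)^2$ (and with $\tr X\,\tr X^j$ in the residual case where all $i_k\ge p+2$); the resulting mixed monomials $\tr X^{p'-1}Y^{q-q'}X^pY^{q'-1}X^{j-p'}$ are then normalized to $\tr X^{p+j-1}Y^{q-1}$ via Corollary~\ref{corollary: MagicOnCM}, at the cost only of terms whose degree drops by at least $4$, and these are absorbed by \eqref{indhyp}. Because you work exclusively with exact identities on $\matnn\times\matnn$ and never invoke Corollary~\ref{corollary: MagicOnCM}, every correction you generate stays at top degree, which is exactly why the $q\ge 1$ case cannot be finished within your scheme.
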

\begin{proof}
    Use induction on $l$. The cases $l = 0$ is Corollary \ref{lemma: 1factor}. Assume the claim for $l$. First take $p+q+ \sum i_k \le D+4$, then $\tr X^p Y^q  \prod_{k=1}^l \tr X^{i_k} \in \mathrm{Lie}(\mathcal{F})$ by the induction assumption. Applying Lemma \ref{lemma: commutator-general} and Corollary \ref{corollary: MagicOnCM} we obtain
    \begin{align*}
        &\{ (\tr X^j)^2, \tr X^p Y^q  \prod_{k=1}^l \tr X^{i_k} \} \\
        =& \, 2 j q \tr X^{p+j-1} Y^{q-1} \tr X^j \prod_{k=1}^l \tr X^{i_k} \\ &+ 2 j  \cdot (\text{terms of degree} \le p+q+j-6)  \cdot \tr X^j \prod_{k=1}^l \tr X^{i_k}
    \end{align*}
    If $2j+p+q+\sum i_k \le D+6$, the last term is in $\mathrm{Lie}(\mathcal{F})$ by \eqref{indhyp}. Then for $j,q \ge 1$, 
    \begin{align*} 
        \tr X^{p+j-1} Y^{q-1} \tr X^j \prod_{k=1}^l \tr X^{i_k} \in \mathrm{Lie}(\mathcal{F})
    \end{align*}
    when the sum of exponents is smaller than or equal to $D+4$. However, observe that the exponent $p+j-1$ plus one is greater than or equal to the exponent $j$. Renaming the exponents $p+j-1 \to p, q-1 \to q, j \to i_{l+1}$, we see that this is equivalent to 
    \begin{align} \label{oneOFik<p}
        \tr X^{p} Y^{q} \prod_{k=1}^{l+1} \tr X^{i_k} \in \mathrm{Lie}(\mathcal{F})
    \end{align}
    where the sum of exponents is smaller than or equal to $D+4$ and one of the exponents $i_k \le p+1$. 
    
    The opposite case is when all $i_k \ge p+2, k = 1, \dots, l+1$. For this we consider for $ 1 \le q $ and $ 2 \le p+2 \le j, i_k$
    \begin{align*}
        &\{ \tr X \tr X^j, \tr X^p Y^q \prod_{k=1}^{l} \tr X^{i_k} \} \\
        =& \, q \tr X^j \tr X^{p}Y^{q-1} \prod_{k=1}^{l} \tr X^{i_k} + jq \tr X \tr X^{p+j-1} Y^{q-1} \prod_{k=1}^{l} \tr X^{i_k} \\
        &+ (\text{terms of degree} \le p+q+j+\sum i_k -5)
    \end{align*}
    Note that $\{ (\tr X)^2, \tr X^j Y \} = 2 \tr X \tr X^j$. Since $ 1 \le p + j $, the second summand is in $\mathrm{Lie}(\mathcal{F})$ by \eqref{oneOFik<p}. Then so is the first summand, hence 
    \begin{align*}
        \tr X^{p} Y^{q} \prod_{k=1}^{l+1} \tr X^{i_k} \in \mathrm{Lie}(\mathcal{F})
    \end{align*}
    when all $i_k \ge p+2, q \ge 1$ and the sum of exponents is smaller than or equal to $D+4$. This completes the proof.
\end{proof}

Next we aim for more factors of the form $\tr X^p Y^q$. 
\begin{lemma} \label{lemma: trXiatrXpbYqb}
    Assume that the induction hypothesis \eqref{indhyp} holds, and let $l \in \mathbb{N}, k \in \mathbb{N}_0$, $i_1,\dots,i_l, p_1, \dots, p_k,$ $q_1,\dots,q_k \in \mathbb{N}_0$ with $\sum_a i_a+\sum_b p_b + q_b \le D+4$ and at least one of $i_a$ is positive. Then 
    \begin{align*}
        \left( \prod_{a=1}^l \tr X^{i_a} \right) \prod_{b=1}^k \tr X^{p_b}Y^{q_b} \in \mathrm{Lie}(\mathcal{F}).
    \end{align*}
\end{lemma}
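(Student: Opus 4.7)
My plan is to proceed by induction on the number of mixed trace factors $k$. The base case $k = 0$ follows from Lemma \ref{lemma: trXYPRODtrX}: after relabelling so that $i_l \ge 1$, that lemma applied with $p = i_l$, $q = 0$ and remaining $X$-factors $i_1, \dots, i_{l-1}$ gives exactly the desired conclusion.

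For the inductive step $k \to k+1$, I would first reclassify any factor $\tr X^{p_b} Y^0 = \tr X^{p_b}$ as an $X$-only factor, so we may assume $q_b \ge 1$ for all $b$. The central identity
\begin{align*}
\{\tr X^{p_{k+1}} Y^{q_{k+1}+1},\ \tr X\} \;=\; -(q_{k+1}+1)\,\tr X^{p_{k+1}} Y^{q_{k+1}}
\end{align*}
motivates the choice $A := \tr X^{p_{k+1}} Y^{q_{k+1}+1}$ and
\begin{align*}
B := \tr X \cdot \Big(\prod_{a=1}^l \tr X^{i_a}\Big) \cdot \prod_{b=1}^k \tr X^{p_b} Y^{q_b}.
\end{align*}
Here $A \in \mathrm{Lie}(\mathcal{F})$ by Corollary \ref{lemma: 1factor}, while $B$ has only $k$ mixed factors together with $l+1$ $X$-only factors (including the positive $\tr X$) and total degree $\le D+4$, so $B \in \mathrm{Lie}(\mathcal{F})$ by the inductive hypothesis on $k$. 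Expanding $\{A, B\}$ via Leibniz, the summand coming from the explicit $\tr X$ in $B$ is exactly $-(q_{k+1}+1)\cdot T$.

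The summands from $\{A, \tr X^{p_b} Y^{q_b}\}$ ($b \le k$) reduce, via Corollary \ref{corollary: MagicOnCM}, to products whose mixed-factor part contains $\tr X^{p_b+p_{k+1}-1} Y^{q_b+q_{k+1}}$ in place of two factors of $T$ and therefore have only $k$ mixed factors with $l+1$ $X$-factors, plus an error of degree $\le D_T - 4 \le D$; the first piece lies in $\mathrm{Lie}(\mathcal{F})$ by the induction on $k$ and the second by the outer induction hypothesis \eqref{indhyp}. The real obstacle is the summands from $\{A, \tr X^{i_a}\}$: by Corollary \ref{corollary: MagicOnCM} these equal $-i_a(q_{k+1}+1)\,\tr X^{i_a+p_{k+1}-1} Y^{q_{k+1}}$ modulo lower-degree terms, and multiplied by the remaining factors of $B$ they still carry $k+1$ mixed factors; namely they produce $T$ with $\tr X^{i_a}$ replaced by $\tr X$ and $\tr X^{p_{k+1}} Y^{q_{k+1}}$ replaced by $\tr X^{i_a+p_{k+1}-1} Y^{q_{k+1}}$.

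To dispose of these, I would run a secondary induction on $\sum_a i_a^2$. For indices $a$ with $i_a \ge 2$ the problematic product has the same total degree as $T$ but strictly smaller $\sum_a i_a^2$, hence lies in $\mathrm{Lie}(\mathcal{F})$ by the secondary induction hypothesis; for indices $a$ with $i_a = 1$ the product coincides with $T$ itself, so these contributions combine with the main Leibniz term into $-m_1(q_{k+1}+1)\cdot T$, where $m_1 \ge 1$ is the number of $\tr X$-factors in $B$. Solving the resulting equation for $T$ concludes the induction. The technical heart of the argument is this secondary-induction bookkeeping, together with the systematic use of Corollary \ref{corollary: MagicOnCM} to absorb the $\thicksim$-errors through \eqref{indhyp}.
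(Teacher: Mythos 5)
Your argument is correct, and it reaches the conclusion by a route that differs from the paper's in its key identity. Both proofs share the same skeleton: induction on the number $k$ of mixed factors with Lemma \ref{lemma: trXYPRODtrX} as base, reclassification of factors with $q_b=0$, and systematic use of Corollary \ref{corollary: MagicOnCM} together with \eqref{indhyp} to absorb the lower-degree errors. But the central bracket and the secondary bookkeeping are different. The paper brackets the two halves $\tr X^i\prod_b \tr X^{p_b}Y^{q_b}$ and $\tr Y^j\prod_a\tr X^{i_a}$ (Equation \eqref{equation: prodtrXiprodtrXpYq}), creates the new mixed factor as $\tr X^{i-1}Y^{j-1}$, and disposes of the remaining same-level cross terms by an induction on $(q_1,\dots,q_k)$ in the lexicographic order, with $(1,\dots,1)$ as the base. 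You instead bracket the single trace $A=\tr X^{p_{k+1}}Y^{q_{k+1}+1}$ (available by Corollary \ref{lemma: 1factor}) against $\tr X$ times the truncated product, so that the target appears with the explicit nonzero coefficient $-m_1(q_{k+1}+1)$, and you eliminate the same-level terms coming from $\{A,\tr X^{i_a}\}$ with $i_a\ge 2$ by a secondary induction on $\sum_a i_a^2$, which strictly decreases when $\tr X^{i_a}$ is traded for $\tr X$ at constant total degree, while the $i_a=1$ terms merge into the main coefficient. The degree bookkeeping checks out: $\deg B\le\deg T\le D+4$ precisely because $q_{k+1}\ge 1$ after reclassification, the leading cross terms with only $k$ mixed factors stay within degree $D+4$, and all $\thicksim$-errors have degree at most $D$, hence are covered by \eqref{indhyp}. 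Your well-founded measure $\sum_a i_a^2$ is arguably a little cleaner than the paper's lexicographic induction on the $Y$-exponents, at the modest cost of inserting the auxiliary factor $\tr X$ into $B$; both devices serve the same purpose of breaking the circularity among terms having $k+1$ mixed factors.
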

\begin{proof}
    Use induction on $k$. Notice that $l \ge 1$, one of $i_a$, say $i_1 \ge 1$, and Lemma \ref{lemma: trXYPRODtrX} is the induction base $k=1$. For the induction step, if $q_c = 0$ for some $c \in \{ 1, \dots, k\}$, then it follows from the induction assumption that 
    \begin{align*}
        &\tr X^p Y^q \left(\prod_{a=1}^l \tr X^{i_a}\right) \prod_{b=1}^{k} \tr X^{p_b}Y^{q_b}  \\
        &= \left(\prod_{a=1}^l \tr X^{i_a}\right) \tr X^{p_c} \tr X^p Y^q \prod_{\substack{b=1 \\b\neq c}}^k \tr X^{p_b}Y^{q_b} \in \mathrm{Lie}(\mathcal{F})
    \end{align*}
    when the degree condition is satisfied. It thus remains to consider the case $q_b \ge 1$ for all $b=1, \dots, k$. For $i, j \ge 1$
    \begin{align} \label{equation: prodtrXiprodtrXpYq}
        &\{  \tr X^i \prod_{b=1}^k \tr X^{p_b}Y^{q_b}, \tr Y^j  \prod_{a=1}^l \tr X^{i_a} \} \\
        \thicksim& \, ij \tr X^{i-1}Y^{j-1} \left( \prod_{a=1}^l \tr X^{i_a} \right)  \prod_{b=1}^k \tr X^{p_b}Y^{q_b} \notag \\
        &+ j \tr X^i \left( \prod_{a=1}^l \tr X^{i_a} \right) \sum_{b=1}^k p_b \tr X^{p_b-1}Y^{q_b+j-1} \prod_{c \neq b} \tr X^{p_c}Y^{q_c} \notag \\
        &-  \tr X^i \tr Y^j \sum_{a=1}^l \sum_{b=1}^k i_a q_b \tr X^{p_b+i_a-1}Y^{q_b-1} \left(\prod_{d \neq a} \tr X^{i_d}\right) \prod_{c \neq b} \tr X^{p_c}Y^{q_c} \notag
    \end{align}
    Each summand in the third line of Equation \eqref{equation: prodtrXiprodtrXpYq} has $k$ factors $\tr X^p Y^q$ and is in $\mathrm{Lie}(\mathcal{F})$ by the induction assumption, if the degree
    \begin{align} \label{equation: degreecondition}
        i + j -2 + \sum_a i_a + \sum_b p_b + q_b \le D+4. 
    \end{align}
    To ensure that the two terms in the Poisson bracket in Equation \eqref{equation: prodtrXiprodtrXpYq} are in $\mathrm{Lie}(\mathcal{F})$, we need
    \begin{align*}
        i+ \sum_b p_b+q_b \le D+4, \quad j + \sum_a i_a \le D+4
    \end{align*}
    which are valid when the inequality \eqref{equation: degreecondition} holds, since $i,j,q_b,i_1 \ge 1$. Therefore we only need to impose one degree condition \eqref{equation: degreecondition}.
    
    Next, for
    \begin{align*}
        (q_1, \dots, q_k ) = (1,  \dots, 1)
    \end{align*}
    each summand in the last line of Equation \eqref{equation: prodtrXiprodtrXpYq} has $k$ factors $\tr X^p Y^q$ (including $\tr Y^j$) and is in $\mathrm{Lie}(\mathcal{F})$. Since $i,j \ge 1$, this implies that the term in the second line is in $\mathrm{Lie}(\mathcal{F})$ with this choice of $q_b=1$. Suppose that 
    \begin{align*}
        \tr X^{i-1}Y^{j-1} \left( \prod_{a=1}^l \tr X^{i_a} \right)  \prod_{b=1}^k \tr X^{p_b}Y^{q_b} \in \mathrm{Lie}(\mathcal{F})
    \end{align*}
    for all $(q_1, \dots, q_k ) \prec (s_1,  \dots, s_k)$ in the lexicographic order. Then it is also in $\mathrm{Lie}(\mathcal{F})$ with $(q_1, \dots, q_k )=(s_1,  \dots, s_k)$, since the terms in the last line of Equation \eqref{equation: prodtrXiprodtrXpYq} have $(q_1, \dots, q_k ) \prec (s_1,  \dots, s_k)$ as exponents of $Y$. An induction on $(q_1, \dots, q_k )$ in the lexicographic order shows that 
    \begin{align*}
        \tr X^{i-1}Y^{j-1} \left( \prod_{a=1}^l \tr X^{i_a} \right)  \prod_{b=1}^k \tr X^{p_b}Y^{q_b} \in \mathrm{Lie}(\mathcal{F})
    \end{align*}
    for any $(q_1, \dots, q_k)$ and with total degree smaller than or equal to $D+4$, which is equivalent to our claim with $l+1$ factors. This completes the proof.
\end{proof}

\begin{lemma} \label{lemma: nfactors}
    Assume that the induction hypothesis \eqref{indhyp} holds, and let $m \in \mathbb{N}_0, p_1, \dots, p_m, q_1,\dots,q_m \in \mathbb{N}_0$ with $\sum_k p_k + q_k \le D+4$. Then 
    \begin{align*}
          \prod_{k=1}^m \tr X^{p_k}Y^{q_k} \in \mathrm{Lie}(\mathcal{F}).
    \end{align*}
\end{lemma}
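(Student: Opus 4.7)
Plan: I would reduce $P = \prod_{k=1}^m \tr X^{p_k}Y^{q_k}$ by a case analysis on the shape of its factors, with the goal of invoking Lemma \ref{lemma: trXiatrXpbYqb} (and its swapped counterpart). Any trivial factor with $p_k=q_k=0$ equals $n$ and is dropped. If some factor is \emph{pure $X$}, i.e.\ $\tr X^{p_k}$ with $p_k\ge 1$, then $P$ already satisfies the hypothesis of Lemma \ref{lemma: trXiatrXpbYqb} and we are done. If some factor is \emph{pure $Y$}, i.e.\ $\tr Y^{q_k}$ with $q_k\ge 1$, I invoke the swapped form of Lemma \ref{lemma: trXiatrXpbYqb} obtained by rerunning the proofs of Lemmas \ref{lemma: trXYPRODtrX} and \ref{lemma: trXiatrXpbYqb} with $X$ and $Y$ interchanged; this is legitimate since $\tr X$, $\tr X^2$, $\tr Y^3$, $(\tr Y)^2$ and all bracket ingredients needed there are already in $\mathrm{Lie}(\mathcal{F})$. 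After these reductions we may assume every factor is truly mixed, $p_k,q_k\ge 1$.

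In this remaining case I relabel so the distinguished factor is $\tr X^{p_1}Y^{q_1}$ with $p_1,q_1\ge 1$ and consider
\[
 B := \{\tr Y^{q_1+1},\, \tr X^{p_1+1}\prod_{k=2}^m \tr X^{p_k}Y^{q_k}\}.
\]
The first argument lies in $\mathrm{Lie}(\mathcal{F})$ by Corollary \ref{lemma: 1factor}. The second argument contains the pure factor $\tr X^{p_1+1}$ with $p_1+1\ge 2$, and its total degree is $\deg P - q_1 + 1 \le D+4$ because $q_1\ge 1$, so Lemma \ref{lemma: trXiatrXpbYqb} places it in $\mathrm{Lie}(\mathcal{F})$. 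Hence $B\in\mathrm{Lie}(\mathcal{F})$.

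A direct application of Lemma \ref{lemma: commutator-general} gives the exact identity $\{\tr Y^{q_1+1},\tr X^{p_1+1}\} = -(p_1+1)(q_1+1)\tr X^{p_1}Y^{q_1}$, so expanding $B$ by the Leibniz rule yields
\[
 B = -(p_1+1)(q_1+1)\,P + \tr X^{p_1+1}\cdot R,
\]
where $R = \sum_{k=2}^m \{\tr Y^{q_1+1},\tr X^{p_k}Y^{q_k}\}\prod_{j\ne 1,k}\tr X^{p_j}Y^{q_j}$. Corollary \ref{corollary: MagicOnCM} rewrites each $\{\tr Y^{q_1+1},\tr X^{p_k}Y^{q_k}\}$ as $-(q_1+1)p_k\tr X^{p_k-1}Y^{q_1+q_k}$ plus a polynomial in trace functions of total degree at most $p_k+q_k+q_1-5$. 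Multiplying through by $\tr X^{p_1+1}$ and the surviving $\tr X^{p_j}Y^{q_j}$, the leading part of $\tr X^{p_1+1}\cdot R$ is a sum of products containing the pure factor $\tr X^{p_1+1}$ and having total degree $\deg P$, hence lies in $\mathrm{Lie}(\mathcal{F})$ by Lemma \ref{lemma: trXiatrXpbYqb}; the remaining terms have total degree $\le \deg P - 4 \le D$ and lie in $\mathrm{Lie}(\mathcal{F})$ by the outer induction hypothesis \eqref{indhyp}. Since the coefficient $(p_1+1)(q_1+1)$ is nonzero, solving the displayed identity for $P$ concludes the proof.

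The main obstacle is arranging the case analysis so that the swapped form of Lemma \ref{lemma: trXiatrXpbYqb} is actually at our disposal; once that is granted, the bracket identity and the degree bookkeeping proceed mechanically via Corollary \ref{corollary: MagicOnCM} and the induction hypothesis.
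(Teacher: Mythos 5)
Your argument is correct, and its engine is the same as the paper's: put a pure power of $\tr X$ into a product already controlled by Lemma \ref{lemma: trXiatrXpbYqb}, bracket against a pure power of $\tr Y$ via Lemma \ref{lemma: commutator-general}, peel off the desired product, and absorb the remaining terms either into Lemma \ref{lemma: trXiatrXpbYqb} (they keep the pure factor $\tr X^{p_1+1}$) or, after Corollary \ref{corollary: MagicOnCM}, into the induction hypothesis \eqref{indhyp}; your degree bookkeeping is accurate. The organizational difference is in which factor is distinguished. The paper computes $\{\tr X^i\prod_{k=1}^m\tr X^{p_k}Y^{q_k},\tr Y^j\}$ with $i,j\ge 1$ arbitrary and reads off $\tr X^{i-1}Y^{j-1}\prod_k\tr X^{p_k}Y^{q_k}$, so the newly created factor may be mixed, pure in $Y$ (take $i=1$), pure in $X$, or trivial; since the bracketed product has degree equal to the target degree minus $j-2$, the bound $\le D+4$ is automatic for $j\ge 2$, and the leftover case $j=1$ is exactly Lemma \ref{lemma: trXiatrXpbYqb} itself. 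This removes your case analysis and, in particular, the appeal to $X\leftrightarrow Y$-swapped versions of Lemmas \ref{lemma: trXYPRODtrX} and \ref{lemma: trXiatrXpbYqb}: for instance an all-pure-$Y$ product $\prod_b\tr Y^{q_b}$ is obtained from $\{\tr X\prod_b\tr Y^{q_b},\tr Y^j\}$, whose first entry is covered by Lemma \ref{lemma: trXiatrXpbYqb} as stated. Your detour is legitimate --- $\mathcal{D}\subset\mathrm{Lie}(\mathcal{F})$ is symmetric in $X$ and $Y$, so the swapped lemmas do hold with identical proofs --- and it is forced only because your bound $\deg P-q_1+1\le D+4$ on the second bracket entry needs $q_1\ge 1$, i.e.\ a mixed distinguished factor; what it buys is a more explicit treatment of the pure-factor edge cases that the paper handles implicitly, at the cost of extra machinery the paper's choice of distinguished factor makes unnecessary.
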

\begin{proof}
    By Lemma \ref{lemma: trXiatrXpbYqb}, if $i+ \sum_k p_k+q_k \le D+4$, $\tr X^i \prod_k \tr X^{p_k}Y^{q_k} \in \mathrm{Lie}(\mathcal{F})$. Then
    \begin{align*}
        &\{ \tr X^i \prod_{k=1}^{m} \tr X^{p_k}Y^{q_k}, \tr Y^j \} \\
        \thicksim& \, i j \tr X^{i-1} Y^{j-1} \prod_{k=1}^{m} \tr X^{p_k}Y^{q_k} \\
        &+ j \tr X^i \sum_{k=1}^{m} p_k \tr X^{p_k-1}Y^{j-1+q_k} \prod_{a \neq k} \tr X^{p_a}Y^{q_a}
    \end{align*}
    The last term is in $\mathrm{Lie}(\mathcal{F})$ when its degree is smaller than or equal to $D+4$. Hence for $i,j \ge 1$, 
    \begin{align*}
        \tr X^{i-1} Y^{j-1} \prod_{k=1}^{m} \tr X^{p_k}Y^{q_k} \in \mathrm{Lie}(\mathcal{F})
    \end{align*}
    when $i -1 + j - 1 + \sum_k p_k+q_k \le D+4$, which is equivalent to our claim with $m+1$ factors. 
\end{proof}

\begin{proof}[Proof of Theorem \ref{theorem: mfactors}]
Lemma \ref{lemma: nfactors} gives the induction step from degree $D$ up to  degree $D+4$ and hence finishes the proof.
\end{proof}

\begin{proof}[Proof of Theorem \ref{thm-hamiltonDP}]
Proposition \ref{proposition: ReductionInvGenerator} and Theorem \ref{theorem: mfactors} established that all algebraic invariant functions on $\widetilde{\camo_n}$ are contained in the Lie algebra that is generated by the Hamiltonian functions in 
    \begin{align*}
        \mathcal{F} = \{  \tr Y, \tr Y^2, \tr X^3, (\tr X)^2 \} 
    \end{align*}
Hence, on $\camo_n$ we obtain all holomorphic Hamiltonian by taking limits. This proves the Hamiltonian holomorphic density property.
\end{proof}

\begin{proof}[Proof of Theorem \ref{thm-camo-autos}]
We apply the Anders\'en--Lempert Theorem, i.e.\ Theorem \ref{thm-AL} and the fact that $\camo_n$ has the Hamiltonian holomorphic density property with $\mathcal{F}$ as above being the generators. Then the corresponding flow maps generate the identity component of the group of symplectic holomorphic automorphisms: In Theorem \ref{thm-AL}, choose $\Phi_t$ to be a path that connects any given symplectic holomorphic automorphism to the identity and use the conclusion in the last paragraph of said theorem.
\end{proof}

\section*{Acknowledgements}
The authors would like to thank Frank Kutzschebauch for helpful and interesting discussions and critical remarks. 
The authors would like to thank Gerald Schwarz for pointing out Procesi's article \cite{MR419491} on invariant functions of matrices.

\section*{Funding}

The first author was supported by the European Union (ERC Advanced grant HPDR, 101053085 to Franc Forstneri\v{c}) and grant N1-0237 from ARRS, Republic of Slovenia. The first author also received support by a Research Group Linkage Programme from the Humboldt Foundation. Further, the first author was supported by Schweizerisches Nationalfonds Grant [200021-207335] during several short stays at the University of Bern.
The second author was supported by Schweizerisches Nationalfonds Grant [200021-207335]. 

\begin{bibdiv}
\begin{biblist}

\bib{MR515141}{book}{
   author={Abraham, Ralph},
   author={Marsden, Jerrold E.},
   title={Foundations of mechanics},
   note={Second edition, revised and enlarged;
   With the assistance of Tudor Ra\c{t}iu and Richard Cushman},
   publisher={Benjamin/Cummings Publishing Co., Inc., Advanced Book Program,
   Reading, Mass.},
   date={1978},
   pages={xxii+m-xvi+806},
   isbn={0-8053-0102-X},
   review={\MR{515141}},
}

\bib{MR1185588}{article}{
   author={Anders\'{e}n, Erik},
   author={Lempert, L\'{a}szl\'{o}},
   title={On the group of holomorphic automorphisms of ${\bf C}^n$},
   journal={Invent. Math.},
   volume={110},
   date={1992},
   number={2},
   pages={371--388},
   issn={0020-9910},
   review={\MR{1185588}},
   doi={10.1007/BF01231337},
}

\bib{MR3975669}{article}{
   author={Andrist, Rafael B.},
   title={Integrable generators of Lie algebras of vector fields on
   $\mathbf{C}^{n}$},
   journal={Forum Math.},
   volume={31},
   date={2019},
   number={4},
   pages={943--949},
   issn={0933-7741},
   review={\MR{3975669}},
   doi={10.1515/forum-2018-0204},
}

\bib{MR4305975}{article}{
   author={Andrist, Rafael B.},
   title={The density property for Calogero-Moser spaces},
   journal={Proc. Amer. Math. Soc.},
   volume={149},
   date={2021},
   number={10},
   pages={4207--4218},
   issn={0002-9939},
   review={\MR{4305975}},
   doi={10.1090/proc/15457},
}

\bib{finite-lie-bis}{article}{
   author={Andrist, Rafael B.},
   title={Integrable generators of Lie algebras of vector fields on $\mathrm{SL}_2(\mathbb{C})$ and on $xy = z^2$},
   date={2022},
   eprint={arXiv:2208.14434},
}

\bib{MR1345386}{book}{
   author={Arnol\cprime d, V. I.},
   title={Mathematical methods of classical mechanics},
   series={Graduate Texts in Mathematics},
   volume={60},
   note={Translated from the 1974 Russian original by K. Vogtmann and A.
   Weinstein;
   Corrected reprint of the second (1989) edition},
   publisher={Springer-Verlag, New York},
   date={1989},
   pages={xvi+516},
   isbn={0-387-96890-3},
   review={\MR{1345386}},
}

\bib{berger-turaev}{article}{
   author={Berger, Pierre},
   author={Turaev, Dmitry},
   title={Generators of groups of Hamiltonian maps},
   date={2022},
   eprint={ArXiv:2210:14710},
   status={preprint}
}

\bib{MR3459703}{article}{
   author={Berest, Yuri},
   author={Eshmatov, Alimjon},
   author={Eshmatov, Farkhod},
   title={Multitransitivity of Calogero-Moser spaces},
   journal={Transform. Groups},
   volume={21},
   date={2016},
   number={1},
   pages={35--50},
   issn={1083-4362},
   review={\MR{3459703}},
   doi={10.1007/s00031-015-9332-y},
}

\bib{MR1785579}{article}{
   author={Berest, Yuri},
   author={Wilson, George},
   title={Automorphisms and ideals of the Weyl algebra},
   journal={Math. Ann.},
   volume={318},
   date={2000},
   number={1},
   pages={127--147},
   issn={0025-5831},
   review={\MR{1785579}},
   doi={10.1007/s002080000115},
}

\bib{MR131271}{article}{
   author={Borel, A.},
   author={Moore, J. C.},
   title={Homology theory for locally compact spaces},
   journal={Michigan Math. J.},
   volume={7},
   date={1960},
   pages={137--159},
   issn={0026-2285},
   review={\MR{131271}},
}

\bib{MR4423269}{article}{
   author={Deng, Fusheng},
   author={Wold, Erlend Forn\ae ss},
   title={Hamiltonian Carleman approximation and the density property for
   coadjoint orbits},
   journal={Ark. Mat.},
   volume={60},
   date={2022},
   number={1},
   pages={23--41},
   issn={0004-2080},
   review={\MR{4423269}},
   doi={10.4310/arkiv.2022.v60.n1.a2},
}

\bib{MR870732}{article}{
   author={Ellingsrud, Geir},
   author={Str\o mme, Stein Arild},
   title={On the homology of the Hilbert scheme of points in the plane},
   journal={Invent. Math.},
   volume={87},
   date={1987},
   number={2},
   pages={343--352},
   issn={0020-9910},
   review={\MR{870732}},
   doi={10.1007/BF01389419},
}

\bib{MR922805}{article}{
   author={Ellingsrud, Geir},
   author={Str\o mme, Stein Arild},
   title={On a cell decomposition of the Hilbert scheme of points in the
   plane},
   journal={Invent. Math.},
   volume={91},
   date={1988},
   number={2},
   pages={365--370},
   issn={0020-9910},
   review={\MR{922805}},
   doi={10.1007/BF01389372},
}

\bib{MR1881922}{article}{
   author={Etingof, Pavel},
   author={Ginzburg, Victor},
   title={Symplectic reflection algebras, Calogero-Moser space, and deformed
   Harish-Chandra homomorphism},
   journal={Invent. Math.},
   volume={147},
   date={2002},
   number={2},
   pages={243--348},
   issn={0020-9910},
   review={\MR{1881922}},
   doi={10.1007/s002220100171},
}

\bib{MR2296754}{book}{
   author={Etingof, Pavel},
   title={Calogero-Moser systems and representation theory},
   series={Zurich Lectures in Advanced Mathematics},
   publisher={European Mathematical Society (EMS), Z\"{u}rich},
   date={2007},
   pages={x+92},
   isbn={978-3-03719-034-0},
   isbn={3-03719-034-0},
   review={\MR{2296754}},
   doi={10.4171/034},
}

\bib{MR1408866}{article}{
   author={Forstneri\v{c}, Franc},
   title={Actions of $(\mathbf{R},+)$ and $(\mathbf{C},+)$ on complex manifolds},
   journal={Math. Z.},
   volume={223},
   date={1996},
   number={1},
   pages={123--153},
   issn={0025-5874},
   review={\MR{1408866}},
   doi={10.1007/PL00004552},
}

\bib{MR4440754}{article}{
   author={Forstneri\v{c}, F.},
   author={Kutzschebauch, F.},
   title={The first thirty years of Anders\'{e}n-Lempert theory},
   journal={Anal. Math.},
   volume={48},
   date={2022},
   number={2},
   pages={489--544},
   issn={0133-3852},
   review={\MR{4440754}},
   doi={10.1007/s10476-022-0130-1},
}

\bib{MR1213106}{article}{
   author={Forstneri\v{c}, Franc},
   author={Rosay, Jean-Pierre},
   title={Approximation of biholomorphic mappings by automorphisms of ${\bf
   C}^n$},
   journal={Invent. Math.},
   volume={112},
   date={1993},
   number={2},
   pages={323--349},
   issn={0020-9910},
   review={\MR{1213106}},
   doi={10.1007/BF01232438},
}

\bib{MR1296357}{article}{
   author={Forstneri\v{c}, Franc},
   author={Rosay, Jean-Pierre},
   title={Erratum: ``Approximation of biholomorphic mappings by
   automorphisms of $\mathbf{C}^n$'' [Invent. Math. {\bf 112} (1993), no. 2,
   323--349; MR1213106 (94f:32032)]},
   journal={Invent. Math.},
   volume={118},
   date={1994},
   number={3},
   pages={573--574},
   issn={0020-9910},
   review={\MR{1296357}},
   doi={10.1007/BF01231544},
}

\bib{MR2385667}{article}{
   author={Kaliman, Shulim},
   author={Kutzschebauch, Frank},
   title={Criteria for the density property of complex manifolds},
   journal={Invent. Math.},
   volume={172},
   date={2008},
   number={1},
   pages={71--87},
   issn={0020-9910},
   review={\MR{2385667}},
   doi={10.1007/s00222-007-0094-6},
}

\bib{MR2768636}{article}{
   author={Kaliman, Shulim},
   author={Kutzschebauch, Frank},
   title={On the present state of the Anders\'{e}n-Lempert theory},
   conference={
      title={Affine algebraic geometry},
   },
   book={
      series={CRM Proc. Lecture Notes},
      volume={54},
      publisher={Amer. Math. Soc., Providence, RI},
   },
   date={2011},
   pages={85--122},
   review={\MR{2768636}},
   doi={10.1090/crmp/054/07},
}

\bib{MR478225}{article}{
   author={Kazhdan, D.},
   author={Kostant, B.},
   author={Sternberg, S.},
   title={Hamiltonian group actions and dynamical systems of Calogero type},
   journal={Comm. Pure Appl. Math.},
   volume={31},
   date={1978},
   number={4},
   pages={481--507},
   issn={0010-3640},
   review={\MR{478225}},
   doi={10.1002/cpa.3160310405},
}

\bib{MR4127820}{article}{
   author={Kuyumzhiyan, Karine},
   title={Infinite transitivity for Calogero-Moser spaces},
   journal={Proc. Amer. Math. Soc.},
   volume={148},
   date={2020},
   number={9},
   pages={3723--3731},
   issn={0002-9939},
   review={\MR{4127820}},
   doi={10.1090/proc/15030},
}

\bib{MR836071}{article}{
   author={Marsden, Jerrold E.},
   author={Ratiu, Tudor},
   title={Reduction of Poisson manifolds},
   journal={Lett. Math. Phys.},
   volume={11},
   date={1986},
   number={2},
   pages={161--169},
   issn={0377-9017},
   review={\MR{836071}},
   doi={10.1007/BF00398428},
}

\bib{MR3200435}{article}{
   author={Popov, Vladimir L.},
   title={On infinite dimensional algebraic transformation groups},
   journal={Transform. Groups},
   volume={19},
   date={2014},
   number={2},
   pages={549--568},
   issn={1083-4362},
   review={\MR{3200435}},
   doi={10.1007/s00031-014-9264-y},
}

\bib{MR419491}{article}{
   author={Procesi, Claudio},
   title={The invariant theory of $n \times n$ matrices},
   journal={Advances in Math.},
   volume={19},
   date={1976},
   number={3},
   pages={306--381},
   issn={0001-8708},
   review={\MR{419491}},
   doi={10.1016/0001-8708(76)90027-X},
}

\bib{MR0506414}{article}{
   author={Razmyslov, Yuri P.},
   title={Identities with trace in full matrix algebras over a field of
   characteristic zero},
   language={Russian},
   journal={Izv. Akad. Nauk SSSR Ser. Mat.},
   volume={38},
   date={1974},
   pages={723--756},
   issn={0373-2436},
   review={\MR{0506414}},
}

\bib{MR929658}{article}{
   author={Rosay, Jean-Pierre},
   author={Rudin, Walter},
   title={Holomorphic maps from ${\bf C}^n$ to ${\bf C}^n$},
   journal={Trans. Amer. Math. Soc.},
   volume={310},
   date={1988},
   number={1},
   pages={47--86},
   issn={0002-9947},
   review={\MR{929658}},
   doi={10.2307/2001110},
}

\bib{MR0223379}{article}{
   author={Sibirski\u{\i}, K. S.},
   title={Algebraic invariants of a system of matrices},
   language={Russian},
   journal={Sibirsk. Mat. Ž.},
   volume={9},
   date={1968},
   pages={152--164},
   issn={0037-4474},
   review={\MR{0223379}},
}

\bib{MR2723362}{book}{
   author={Tu, Loring W.},
   title={An introduction to manifolds},
   series={Universitext},
   edition={2},
   publisher={Springer, New York},
   date={2011},
   pages={xviii+411},
   isbn={978-1-4419-7399-3},
   review={\MR{2723362}},
   doi={10.1007/978-1-4419-7400-6},
}

\bib{MR1829353}{article}{
   author={Varolin, Dror},
   title={The density property for complex manifolds and geometric
   structures},
   journal={J. Geom. Anal.},
   volume={11},
   date={2001},
   number={1},
   pages={135--160},
   issn={1050-6926},
   review={\MR{1829353}},
   doi={10.1007/BF02921959},
}

\bib{MR1785520}{article}{
   author={Varolin, Dror},
   title={The density property for complex manifolds and geometric
   structures. II},
   journal={Internat. J. Math.},
   volume={11},
   date={2000},
   number={6},
   pages={837--847},
   issn={0129-167X},
   review={\MR{1785520}},
   doi={10.1142/S0129167X00000404},
}

\bib{MR1626461}{article}{
   author={Wilson, George},
   title={Collisions of Calogero-Moser particles and an adelic Grassmannian},
   note={With an appendix by I. G. Macdonald},
   journal={Invent. Math.},
   volume={133},
   date={1998},
   number={1},
   pages={1--41},
   issn={0020-9910},
   review={\MR{1626461}},
   doi={10.1007/s002220050237},
}

\end{biblist}
\end{bibdiv}

\end{document}